\newtheorem{Theorem}{\color{black} Theorem}[section]
\newtheorem{assumption}{\color{black} Assumption}[section]
\newtheorem{lemma}[Theorem]{\color{black} Lemma}
\newtheorem{definition}{\color{black} Definition}[section]
\newtheorem{Remark}[Theorem]{\color{black} Remark}
\numberwithin{equation}{section}
\begin{document}
	
	\title{Amplitude equations for SPDEs with quadratic nonlinearities forced by  additive and multiplicative noise\thanks{This work is supported by National Natural Science Foundation of China (No.11771177) and Jilin Scientific and Technological Development Program (No.20190201132JC and No.20200201264JC).}}
	
	
	\author{Shiduo Qu \\
		School of Mathematics, Jilin University 
		Changchun 130012
		P. R. China\\
		Emails: qusdjlu@hotmail.com\\
		Wenlei Li\\
		School of Mathematics, Jilin University
		Changchun 130012
		P. R. China\\
		Emails: lwlei@jlu.edu.cn\\
		Shaoyun Shi\footnote{Corresponding author}\\
		School of Mathematics \& State key laboratory
		of automotive\\ simulation and control, Jilin University
		Changchun 130012
		P. R. China\\
		Emails: shisy@jlu.edu.cn}
	
	
	\date{}
	\maketitle
	{\noindent{\bf Abstract}} 
	This article deals with stochastic partial differential equations with quadratic nonlinearities perturbed by small additive and multiplicative noise. We present the approximate solution of the original equation
	via the amplitude equation and give the rigorous error analysis. For illustration, we apply our main theorems to stochastic Burger's equation.
	
	{\it Keywords}: {\bf}  amplitude equations, stochastic partial differential equations, quadratic nonlinearities, additive noise, multiplicative noise.

	\vskip 2mm
	\section{Introduction}
	Stochastic partial differential equations (SPDEs) with quadratic nonlinearities of the type
	\begin{align}\label{eq000}
	\textup{d}u=\mathcal{A}u\textup{d}t+B(u,u)\textup{d}t+G(u)\textup{d}W(t),
	\end{align}
	are used to study some physical phenomenon such as hydrodynamic
	turbulence  (Burgers' equation) \cite{Ch1}, surface erosion (Kuramoto-Sivashinsky equation) \cite{La1}, amorphous thin-film growth \cite{Ra}, propagation of solitons (Korteweg-de Vries equation) \cite{Bo1, Bo2} and Rayleigh-B{\'e}nard convection \cite{Bl8}.
	
	In this paper, we consider (\ref{eq000}) perturbed by small deterministic perturbation  and small noise:
	\begin{equation}\label{eq001}
	\begin{split}
	\textup{d}u&=[\mathcal{A}u+\varepsilon^{2}\mathcal{L}u+B(u,u)]\textup{d}t+G(u,\varepsilon)\textup{d}W(t),~~u\in \mathcal{H},\\
	u(0)&=u_{0},
	\end{split}
	\end{equation}
	where  $\mathcal{H}$ is an infinite dimensional separate Hilbert space with
	scalar product $\langle\cdot,\cdot\rangle$ and corresponding norm $\|\cdot\|$, $\mathcal{A}$ is a self-adjoint and non-positive operator with finite dimensional kernel space called as dominated modes, $\mathcal{L}$ is
	a linear operator, $B$ is a bilinear and symmetric operator, $G(u,\varepsilon)$ is a Hilbert-Schmidt operator,
	$W(t)$ is a cylindrical Wiener process with covariance operator $I$ on some stochastic space,
	and $\varepsilon$ is a small parameter characterizing the distance from bifurcation point and the strength of the noise.
	
	This paper will answer two questions. One question is whether there exists a simplified system can characterize the limit behavior for the original system as $\varepsilon$ tends to $0$. This question arises naturally from the complexity of multi-scale SPDEs, which causes that it is not easy to analyze dynamical behaviors and provide numerical stimulation. After extracting effective information from the original system, we will present a reduced system, and rigorously prove that it is regarded as a good approximation of the original one. On the other hand, we will explain the extent to which stochastic forcing influences the dynamics near a change of stability. This argument is motivated by
	physics investigations \cite{Hu1,Hu2,Hu3,Ro1} in which scholars observe that noise has the potential to stabilize the dynamics. For specific systems, with the help of the simplified system, we will clearly give defined conditions under which the original system are stable or unstable.
	
	The approach we rely on is use amplitude equation deriving from dominated pattern to captures the effective dynamics of the original system.
	We precisely interpret the procedure  of the approach follows:\\
	$\bullet$ Remove high order terms from dominated modes;\\
	$\bullet$ Extract amplitude equations from dominated modes;\\
	$\bullet$ Estimate the error between the amplitude equations and the original equations.\\ Amplitude equation not only contributes to the approximation for SPDEs, but also explains
	whether the noise could shift bifurcation point. The first rigorous result for SPDEs on bounded domain via amplitude equations was established by Bl\"{o}mker \textit{et al}
	\cite{Bl2}. After that, there have been rapid progresses for SPDEs with additive noise, such as quadratic nonlinearities \cite{Bl1,Bl4}, cubic nonlinearities \cite{Bl5, Bl3}, as well as both quadratic and cubic nonlinearities \cite{Kl1, Mo2,  Mo7}. Recently, Bl\"{o}mker and Fu \cite{Fu1} considered a class of SPDEs with cubic nonlinearities perturbed by multiplicative noise via amplitude equations. However, except  results in \cite{Bl6,Bl7}, amplitude equations for SPDEs with quadratic nonlinearities perturbed by multiplicative noise are unknown, let alone additive and multiplicative noise. The aim of this paper is to develop this research. We will consider (\ref{eq001})  in two cases which  are further investigations for the results in \cite{Bl4, Mo2}. 
	
	In the first case, we are
	concerned about additive noise of order $\varepsilon^{2}$, and obtain amplitude equation without too many restrictions. 
	Compared with previous work, our result underlines the important role that multiplicative noise plays in amplitude equation. Let us illustrate this point with stochastic Burger's equation:
	\begin{align}\label{eq098}
	\textup{d}u=[(\partial_{xx}+1)u+\varepsilon^{2}\nu u+u\partial_{x}u]\textup{d}T+(\varepsilon^{2}+\varepsilon u)\textup{d}W(t)
	\end{align}
	on $[0, \pi]$ subject to Dirichlet boundary condition. Under some assumptions, we obtain amplitude equation:
	\begin{align}\label{eq099}
	\textup{d}\tilde{x}=(\nu\tilde{x}-\frac{1}{12 }\tilde{x}^{3})\textup{d}T+\alpha_{1}\textup{d}\beta_{1}(T)+\frac{8\sqrt{2}\alpha_{1}}{3\pi^{\frac{3}{2}}}\tilde{x}\textup{d}\beta_{1}(T)-\frac{8\sqrt{2}\alpha_{3}}{15\pi^{\frac{3}{2}}}\tilde{x}\textup{d}\beta_{3}(T),
	\end{align}
	where $\beta_{1}(T)$ and $\beta_{3}(T)$ are real-valued Brownian motion,  $\alpha_{1}$ and $\alpha_{3}$ are coefficients from $W(t)$.
	However, if multiplicative noise does not involve in (\ref{eq098}), under same assumptions, amplitude equation is
	\begin{align*}
	\textup{d}\tilde{x}=(\nu \tilde{x}-\frac{1}{12 }\tilde{x}^{3})\textup{d}T+\alpha_{1}\textup{d}\beta_{1}(T).
	\end{align*}
	From the comparison of two amplitude equations, multiplicative noise makes a difference to amplitude equation.
	Our main theorem further states that
	\begin{align*}
	u(t)=\varepsilon \tilde{x}(\varepsilon^{2}t)\sin{x}+\mathcal{O}(\varepsilon^{2}),
	\end{align*}
	with $\tilde{x}(\varepsilon^{-2}t)=\tilde{x}(T)$ from (\ref{eq099}).
	Then, observing (\ref{eq099}), we can clearly understand how multiplicative noise changes the stability of (\ref{eq098}) if $\alpha_{1}=0$. In fact,  the constant solution 0 is locally stable if $\nu<\frac{64\alpha_{3}^{2}}{225\pi^{3}}$, and locally unstable if $\nu>\frac{64\alpha_{3}^{2}}{225\pi^{3}}$. To our best knowledge, it is the first observation in quadratic nonlinearities with this field. More than that, such stability analysis allows us to explain that the bifurcation point of a deterministic system could change if it is perturbed by noise. Therefore, we think our result provides a new perspective for stochastic bifurcation theory.
	
	In the second one, we take into account additive noise of order $\varepsilon$, and assume that $\mathcal{L}$ commute with the projection operators. Since multiplicative noise is studied in this case, fast fluctuation appears in diffusion terms. This problem causes that we can gain amplitude equation, but we can not show how fast the solution of the  amplitude equation converges to that of the original one if the dimension of $\ker{A}$ is more than one. Fortunately, we can give the explicit error between the approximation solution and the original one for one dimensional kernel space by martingale representation theorem.
	
	The rest of the paper is organized as follows. In Section 2, we introduce some assumptions and notations. In Section 3, we study the amplitude equations of (\ref{eq001}) forced by non-degenerate noise and multiplicative noise, and present the main theorem. In Section 4, we focus on (\ref{eq001}) forced by degenerate noise and multiplicative noise, provide different amplitude equations in terms of different assumptions, and show rigorous  convergence analysis and error estimate. In Section 4. as an application of the main results
	, we study the limit behavior of stochastic Burger's equation, and discuss the relationship between the stability and noise.
	\section{Notations and Assumptions}
	Throughout the paper, notations $C$ and $C_{i}$ may denote different positive constants independent of $\varepsilon$ in different occasions. In the followings, we provide some assumptions and notations.
	\begin{assumption}\label{assu1}
		Assume that $\mathcal{A}$ is a non-positive and self-adjoint operator on $\mathcal{H}$ with eigenvalues $0=\lambda_{1}\leq\cdot\cdot\cdot\leq\lambda_{k}\cdot\cdot\cdot$,
		and $\lambda_{k}\geq Ck^{m}$ holds for all sufficiently large $k$, positive constants $m$ and $C$. Suppose that there is a complete orthonormal basis $\{e_{k}\}_{k\in\mathbb{N}}$ such that $\mathcal{A}e_{k}=-\lambda_{k}e_{k}$ and the kernel space of $\mathcal{A}$ is finite dimensional. Denote $\emph{ker}\mathcal{A}$ and the orthogonal complement
		of it by $\mathcal{N}$ and $\mathcal{S}$.
	\end{assumption}
	\begin{assumption}\label{assu1-1}
		Assume that the dimension of $\ker{\mathcal{A}}$ is $1$.
	\end{assumption}
	
	Define projections $P_{c}: \mathcal{H}\rightarrow \mathcal{N}$ and $P_{s}=I-P_{c}$.
	For a map $L$, we use $L_{c}:=P_{c}L$ and $L_{s}:=P_{s}L$.
	\begin{definition}
		For $\alpha\in\mathbb{R}$, we define the space $\mathcal{H}^{\alpha}$ as
		\begin{equation*}
		\mathcal{H}^{\alpha}=\Big\{\sum^{\infty}_{k=1}\gamma_{k}e_{k}:\sum^{\infty}_{k=1}\gamma_{k}^{2}(\lambda_{k}+1)^{\alpha}<\infty\Big\}
		\end{equation*}
		with the norm
		\begin{equation*}
		\|\sum^{\infty}_{k=1}\gamma_{k}e_{k}\|_{\alpha}=(\sum^{\infty}_{k=1}\gamma_{k}^{2}(\lambda_{k}+1)^{\alpha})^{\frac{1}{2}}.
		\end{equation*}
	\end{definition}
	\par If $\mathcal{A}$ satisfies the Assumption \ref{assu1}, it can generate an analytic semi-group $\{e^{\mathcal{A}t}\}_{t\geq 0}$ on any space $\mathcal{H}^{\alpha}$, defined by
	\begin{equation*}
	e^{\mathcal{A}t}(\sum^{\infty}_{k=1}\gamma_{k}e_{k})=\sum^{\infty}_{k=1}e^{-\lambda_{k}t}\gamma_{k}e_{k},~~ t\geq0.
	\end{equation*}
	Moreover, $\mathcal{A}$ enjoys the following property.
	\begin{lemma}\label{l51}
		Under Assumption \ref{assu1}, for $ \forall \rho\in(\lambda_{n},\lambda_{n+1}]$, $t\geq 0$, $\beta\leq\alpha$, there is a constant $M>0$,
		such that for $\forall u\in\mathcal{H}^{\beta}$,
		\begin{equation*}
		\|e^{\mathcal{A}t}P_{s}u\|_{\alpha}\leq Mt^{-\frac{\beta}{m}}e^{-\rho t}\|P_{s}u\|_{\alpha-\beta}.
		\end{equation*}
	\end{lemma}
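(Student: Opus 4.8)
The plan is to prove the smoothing estimate for the analytic semigroup restricted to the stable subspace $\mathcal{S}$ by working in the spectral decomposition provided by Assumption \ref{assu1}. First I would write an arbitrary $u\in\mathcal{H}^{\beta}$ as $u=\sum_{k}\gamma_k e_k$, so that $P_s u=\sum_{\lambda_k>0}\gamma_k e_k$ (i.e.\ the sum over indices $k$ with $\lambda_k\geq\lambda_{n+1}>\rho$, discarding the kernel modes), and compute
\begin{equation*}
\|e^{\mathcal{A}t}P_s u\|_{\alpha}^2=\sum_{\lambda_k>0}e^{-2\lambda_k t}\gamma_k^2(\lambda_k+1)^{\alpha}
=\sum_{\lambda_k>0}\Big[e^{-2\lambda_k t}(\lambda_k+1)^{\beta}\Big]\gamma_k^2(\lambda_k+1)^{\alpha-\beta}.
\end{equation*}
The entire problem then reduces to bounding the scalar multiplier $\phi(\lambda):=e^{-2\lambda t}(\lambda+1)^{\beta}$ uniformly in $\lambda\geq\rho$ by $M^2 t^{-2\beta/m}e^{-2\rho t}$, because pulling such a bound out of the sum leaves exactly $\sum_k\gamma_k^2(\lambda_k+1)^{\alpha-\beta}=\|P_s u\|_{\alpha-\beta}^2$ (using $\|P_s u\|\le\|u\|$ on each $\mathcal{H}^\gamma$-norm, which is immediate since $P_s$ just deletes terms).

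The key step is the elementary one-variable estimate. I would split the exponential as $e^{-2\lambda t}=e^{-\rho t}e^{-(2\lambda-\rho)t}\le e^{-\rho t}e^{-\lambda t}$ for $\lambda\ge\rho$ (since then $2\lambda-\rho\ge\lambda$), so it suffices to show $e^{-\lambda t}(\lambda+1)^{\beta}\le C t^{-2\beta/m}$ uniformly for $\lambda\ge\rho$, $t\ge0$. If $\beta\le0$ this is trivial since $(\lambda+1)^\beta\le 1$ and one absorbs the (possibly singular as $t\to0$, but here one would instead use $\beta\le 0\Rightarrow t^{-2\beta/m}\ge$ something) — actually the clean way is: for $\beta\le0$, $(\lambda+1)^\beta\le\rho^\beta\le C$, no negative power of $t$ is needed, and $t^{-\beta/m}\ge $ constant only fails near $t=0$, so I would instead just note $\sup_{\lambda\ge\rho}e^{-\lambda t}(\lambda+1)^\beta\le\sup_{s\ge0}e^{-s}((s/t)+1)^\beta$; for $\beta>0$ substitute $s=\lambda t$ to get $t^{-\beta}\sup_{s}e^{-s}(s+t)^\beta\le t^{-\beta}\sup_s e^{-s}(s+1)^\beta\,(\text{for }t\le1)$ and a bounded quantity for $t\ge1$, yielding a bound of the form $C\max(1,t^{-\beta})$; one then checks $\beta\le m$ forces... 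Here the exponent $t^{-\beta/m}$ in the statement (rather than $t^{-\beta}$) reflects that the relevant scaling is $\lambda_k\sim k^m$, so I would more carefully use $\lambda+1\le C(\lambda^{1/m})^m$-type comparisons; concretely, write $(\lambda+1)^{\beta/m\cdot m}$ and bound $e^{-\lambda t}\lambda^{\beta}\le (\beta/(et))^{\beta}$ — wait, that gives $t^{-\beta}$ again. I would therefore read the intended estimate with the understanding that $\beta$ here plays the role of a smoothing index measured in powers of $\mathcal{A}$, and the $1/m$ simply converts between $\|\cdot\|_\beta$ (powers of $\lambda_k$) — so the correct multiplier is $e^{-2\lambda t}(\lambda+1)^{\beta}$ bounded by $Ct^{-\beta/m}e^{-\rho t}$ only when the $\mathcal{H}^\beta$ norm is itself defined via $k^{m\cdot}$; I would reconcile notations and then the bound $\sup_{s\ge0}e^{-s}s^{\beta/?}$ closes it.

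The main obstacle — really the only nontrivial point — is getting the power of $t$ exactly right and making the $t\to0^+$ behavior honest: the supremum $\sup_{\lambda\ge\rho}e^{-\lambda t}(\lambda+1)^\beta$ is attained near $\lambda\sim\beta/t$ when $t$ is small and $\beta>0$, producing the negative power of $t$, whereas for $\beta\le0$ the supremum is at $\lambda=\rho$ and no blow-up occurs (so the stated $t^{-\beta/m}\ge$ is a harmless over-estimate). I would dispatch this by the standard substitution $s=\lambda t$, reducing to $t^{-\beta/m}\sup_{s\ge\rho t}e^{-(\cdot)s}(\,s/t+1\,)^{\beta}\cdot t^{\beta/m}$ and invoking the uniform bound $\sup_{s>0}e^{-s}s^{\gamma}=(\gamma/e)^\gamma<\infty$ for $\gamma\ge0$, together with $\lambda_k+1\le C\lambda_k^{}$ for $\lambda_k\ge\rho>0$ to absorb the shift by $1$. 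Everything else is bookkeeping: factoring the series, using $\|P_s u\|_{\alpha-\beta}\le\|u\|_{\alpha-\beta}$, and collecting constants into a single $M$.
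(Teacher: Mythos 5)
The paper states this lemma without proof, so there is no in-paper argument to compare against; your spectral approach (diagonalize, factor the series, and reduce everything to bounding the scalar multiplier $e^{-2\lambda t}(\lambda+1)^{\beta}$ over $\lambda\ge\lambda_{n+1}$) is the standard and essentially the only proof. The reduction in your first paragraph is correct, and the right scalar tool is indeed $\sup_{s>0}e^{-s}s^{\gamma}=(\gamma/e)^{\gamma}$ combined with $(\lambda+1)\le c(\lambda-\rho)$ on $[\lambda_{n+1},\infty)$ --- note this last step needs $\rho<\lambda_{n+1}$ strictly; at the endpoint $\rho=\lambda_{n+1}$ the stated inequality actually fails for $\beta>0$ as $t\to\infty$, since the $(n+1)$-st mode contributes a constant while $t^{-\beta/m}e^{-\rho t}\cdot e^{\rho t}\to 0$.

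The genuine gap is the one you stumbled on and left unresolved (the literal ``$\beta/?$''): with the paper's definition of $\|\cdot\|_{\alpha}$, which uses weights $(\lambda_k+1)^{\alpha}$, the multiplier bound you can actually prove is $e^{-2\lambda t}(\lambda+1)^{\beta}\le Ct^{-\beta}e^{-2\rho t}$, which after taking square roots gives $\|e^{\mathcal{A}t}P_{s}u\|_{\alpha}\le Ct^{-\beta/2}e^{-\rho t}\|P_{s}u\|_{\alpha-\beta}$ --- exponent $\beta/2$, not $\beta/m$. Your diagnosis of the mismatch is correct: the factor $1/m$ belongs to the convention of \cite{Bl3, Bl4}, where $\|u\|_{\alpha}^{2}=\sum\gamma_{k}^{2}k^{2\alpha}$ and one trades $k\le(\lambda_{k}/C)^{1/m}$ to convert powers of $k$ into powers of $\lambda_{k}$. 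With the $(\lambda_k+1)^{\alpha}$-weights the stated exponent $t^{-\beta/m}$ is obtainable only when $m\le 2$ (it coincides with $t^{-\beta/2}$ in the Burgers example, where $m=2$), and is genuinely false for $m>2$, $\beta>0$: test eigenvalues with $\lambda_{k}\sim\beta/t$ as $t\to0^{+}$. So your proof cannot close as written because the statement and the norm convention are incompatible in general; you must commit to one normalization and then finish the elementary estimate cleanly, e.g.\ for $0\le\beta$ and $\rho<\lambda_{n+1}$ write $e^{-2\lambda t}=e^{-2\rho t}e^{-2(\lambda-\rho)t}$ and bound $e^{-2(\lambda-\rho)t}(\lambda-\rho)^{\beta}\le(\beta/2e)^{\beta}t^{-\beta}$. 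Two smaller bookkeeping errors to fix in any final version: your split $e^{-2\lambda t}\le e^{-\rho t}e^{-\lambda t}$ extracts only one factor of $e^{-\rho t}$ where the squared target requires $e^{-2\rho t}$, and the case $\beta<0$ (allowed by the statement but never used in the paper) makes the right-hand side vanish as $t\to0^{+}$ while the left-hand side does not, so it should simply be excluded rather than ``harmlessly over-estimated.''
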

	\begin{assumption}\label{assu2}
		Assume that $\mathcal{L}:\mathcal{H}^{\alpha}\rightarrow\mathcal{H}^{\alpha-\beta}$ is a linear continuous mapping, for some $\alpha\in\mathbb{R}$ and $\beta\in [0,m)$.
	\end{assumption}
	\begin{assumption}\label{assu3}
		Assume that $B:\mathcal{H}^{\alpha}\times\mathcal{H}^{\alpha}\rightarrow\mathcal{H}^{\alpha-\beta}$
		is a bounded bilinear and symmetric operator with $\alpha$ and $\beta$ given in Assumption \ref{assu2}.
		Moreover, suppose that $B_{c}(a)=0$, for $a\in\mathcal{N},$ where we use the notation $B(a):=B(a,a)$.
	\end{assumption}
	\begin{assumption}\label{assu3-1}
		Assume that $B_{c}(e_{k},e_{k})=0$, for $k>n$.
	\end{assumption}
	\begin{definition}
		Define $\mathcal{F}:\mathcal{N}\times\mathcal{N}\times\mathcal{N}\rightarrow\mathcal{N}$ by
		\begin{equation*}
		\mathcal{F}(u,v,w)=-B_{c}(u,\mathcal{A}_{s}^{-1}B_{s}(v,w)),~u,v,w\in\mathcal{N}.
		\end{equation*}
	\end{definition}
	\begin{assumption}\label{assu4}
		Assume that $\mathcal{F}$ is a trilinear, symmetric mapping and satisfies the following conditions:
		for positive constant $C_{0}$,
		\begin{equation}\label{eq41}
		\|\mathcal{F}(u,v,w)\|<C_{0}\|u\|\|v\|\|w\|,\quad\quad\forall u,v,w\in\mathcal{N},
		\end{equation}
		and for positive constants $C_{1}, C_{2}, C_{3}$, for all $u,v,w\in\mathcal{N}$
		\begin{align}
		\langle\mathcal{F}_{c}(u,v,w)-\mathcal{F}_{c}(v),u\rangle\leq -C_{1}\|u\|^{4}+C_{2}\|w\|^{4}+C_{3}\|w\|^{2}\|v\|^{2},\label{eq44}
		\end{align}
		where we use $\mathcal{F}(u):=\mathcal{F}(u,u,u)$ for short notation.
	\end{assumption}
	\begin{assumption}\label{assu5}
		Let $U$ be a separable Hilbert space with scalar product $\langle\cdot,\cdot\rangle_{U}$. Assume that $W(t)$ is a $U$-valued cylindrical Wiener process on a stochastic base
		$(\Omega,\mathscr{F},\{\mathscr{F}_{t}\}_{t\geq 0},\mathbb{P})$ with covariance operator $I$, the identity operator.
	\end{assumption}
	\par Note that $W(t)$ has the expansion \cite{Da}
	\begin{equation*}
	W(t)=\sum^{\infty}_{k=1}\beta_{j}(t)f_{j},
	\end{equation*}
	where $\{\beta_{j}(t)\}_{j\in\mathbb{N}}$ are real valued Brownian motions mutually independent on the above stochastic basis and $\{f_{j}\}_{\mathbb{N}}$ is a complete orthonormal system in $U$.
	
	Since multiplicative noise runs through this paper, we recall Hilbert-Schmidt operator here. Suppose that $U$, $\mathcal{H}$ are two separable Hilbert spaces with complete orthonormal basis $\{f_{j}\}_{j\in\mathbb{N}}\subset U$, $\{e_{k}\}_{k\in\mathbb{N}}\subset \mathcal{H}$. A linear and bounded operator $L$ is said to be Hilbert-Schmidt if
	\begin{equation*}
	\sum^{\infty}_{j=1}\sum^{\infty}_{k=1}|\langle L f_{j}, e_{k}\rangle_{\mathcal{H}}|^{2}<\infty.
	\end{equation*}
	Denote the set of all Hilbert-Schmidt operators from $U$ to $\mathcal{H}$ by $\mathscr{L}_{2}(U,\mathcal{H})$. Note that $\mathscr{L}_{2}(U,\mathcal{H})$
	is separable Hilbert space, with the scalar product
	\begin{equation*}
	\langle\cdot,\cdot\rangle_{\mathscr{L}_{2}(U,\mathcal{H})}=\sum^{\infty}_{j=1}\langle \cdot f_{j}, \cdot f_{j} \rangle_{\mathcal{H}},
	\end{equation*}
	by which the norm of $\mathscr{L}_{2}(U,\mathcal{H})$ is induced.
	\begin{assumption}\label{assu6}
		Suppose that $G:\mathcal{H}^{\alpha}\times \mathcal{R}^{+}\rightarrow\mathscr{L}_{2}(U,\mathcal{H}^{\alpha})$
		with $\alpha$ as in Assumption \ref{assu2},
		and $G(u,\varepsilon)=\sigma_{\varepsilon}\widetilde{G}+\varepsilon\bar{G}(u)$, where $\sigma_{\varepsilon}$ is scaling parameter and $\bar{G}(0)=0$.
		We further assume
		\begin{equation*}
		\widetilde{G}\cdot f_{k}=\alpha_{k}e_{k},
		\end{equation*}
		where $\alpha_{k}$ are real constants for $1\leq k\leq N$, and $\alpha_{k}=0$ for $k> N$.
		Moreover, assume that
		$\bar{G}(u)$ is Fr$\acute{\textsl{e}}$chet differentiable up to order 2, and for $\forall u,v,w\in\mathcal{H}^{\alpha}$ with $\|u\|_{\alpha}\leq r$,
		there exists a positive constant $l_{r}$ depending on $r$ such that
		\begin{align}
		&\|\bar{G}(u)\|_{\mathscr{L}_{2}(U,\mathcal{H}^{\alpha})}\leq l_{r}\|u\|_{\alpha},\notag \\
		&\|\bar{G}'(u)\cdot v\|_{\mathscr{L}_{2}(U,\mathcal{H}^{\alpha})}\leq l_{r}\|v\|_{\alpha}, \label{eq26} \\
		&\|\bar{G}''(u)\cdot (v,w)\|_{\mathscr{L}_{2}(U,\mathcal{H}^{\alpha})}\leq  l_{r}\|v\|_{\alpha}\|w\|_{\alpha}, \notag
		\end{align}
		where $\bar{G}'(u)$ and $\bar{G}''(u)$ are the first and second Fr$\acute{\textsl{e}}$chet derivatives with respect to $u$ respectively.
	\end{assumption}
	\begin{Remark}
		We can consider the case that $N=\infty$ and $\widetilde{G}$ is non-diagonal operator, but addition assumptions need to be imposed to ensure the convergence of various infinite series. Therefore, we ignore the case for simplicity of presentation.
	\end{Remark}
	\begin{definition}\label{def3}
		An $\mathcal{H}^{\alpha}$-valued process $u(t)$ is called the local mild solution of $(\ref{eq001})$, if there exists some stopping time $\tau_{ex}$, on a full set
		in which $\tau_{ex}>0$, we have $u\in\mathcal{C}^{0}([0,\tau_{ex}),\mathcal{H}^{\alpha}) $ and
		\begin{equation*}
		u(t)=e^{\mathcal{A}t}u(0)+\int^{t}_{0}e^{\mathcal{A}(t-s)}[\varepsilon^{2}\mathcal{L}u+B(u)]\textup{d}s+\int^{t}_{0}e^{\mathcal{A}(t-s)}G(u,\varepsilon)\textup{d}W(s),
		\end{equation*}
		$\forall t\in[0,\tau_{ex})$.
	\end{definition}
	
	According to the above assumptions, we could state that there
	exsits local mild solution in (\ref{eq001}) by cut-off technique and  Theorem 7.2 in \cite{Da}.
	\begin{Theorem}\label{theo3}
		Under Assumption $\ref{assu1}-\ref{assu6}$, for any given $u(0)\in\mathcal{H}^{\alpha}$, there exists  a unique
		local mild solution of $(\ref{eq001})$ in the sense of Definition $\ref{def3}$ such that $\tau_{ex}=\infty$
		or $\lim\limits_{t\rightarrow \tau_{ex}}\|u(t)\|_{\alpha}=\infty$.
	\end{Theorem}

	\section{Non-degenerate additive noise}
	In this section, we will consider the case that the dominated modes are affected by additive noise, and assume $\sigma_{\varepsilon}=\varepsilon^{2}$. The section is devoted to presenting the amplitude equation of $u$ and providing the approximation result. In short, we firstly bounded the fast modes. Next, we extract the amplitude equation after separating higher order terms from the slow modes in the drift and diffusion terms. Then, we show the error estimate between the solution of the amplitude equation and that of the slow modes. Finally, we prove that the solution of (\ref{eq001}) can be approximated by that of the amplitude equations well in $[0,\varepsilon^{-2}T_{0}]$.
	
	Split $u$ into slow modes $a$ and fast modes $b$:
	\begin{align*}
	u(t)=\varepsilon a(\varepsilon^{2}t)+\varepsilon^{2}b(\varepsilon^{2}t),
	\end{align*}
	where $a\in\mathcal{N}$ and $b\in\mathcal{S}$. Introduce slow time scale $T=\varepsilon^{2}t$. In what follows, we will study the behavior of (\ref{eq001}) on such scale. Then, with the projections $P_{c}$ and $P_{s}$, we have
	\begin{align}
	\textup{d}a&=[\mathcal{L}_{c}a+\varepsilon\mathcal{L}_{c}b+2B_{c}(a,b)+\varepsilon B_{c}(b)]\textup{d}T+[\widetilde{G}_{c}+\varepsilon^{-1}\bar{G}_{c}(\varepsilon a+\varepsilon^{2}b)]\textup{d}\widetilde{W}, \label{eq002} \\
	\textup{d}b&=[\varepsilon^{-2}\mathcal{A}_{s}b+\varepsilon^{-1}\mathcal{L}_{s}a+\mathcal{L}_{s}b+B_{s}(b)+2\varepsilon^{-1}B_{s}(a,b)]\textup{d}T\label{eq003}\\
	&~~+\varepsilon^{-2}B_{s}(a)\textup{d}T+[\varepsilon^{-1}\widetilde{G}_{s}+\varepsilon^{-2}\bar{G}_{s}(\varepsilon a+\varepsilon^{2}b)]\textup{d}\widetilde{W}.\notag
	\end{align}
	
	As stated in Theorem $\ref{theo3}$, the local mild solution of $(\ref{eq001})$ may blow up in finite time, so we introduce some stopping time to ensure that $a(T)$ and $b(T)$ are not too large on some interval.
	\begin{definition}\label{defs1}
		For an $\mathcal{N}\times\mathcal{S}$-valued stochastic process $(a,b)$ given by $(\ref{eq002})$ and $(\ref{eq003})$, we define, for $T_{0}>0$ and $\kappa\in(0,\frac{2}{19})$, the stopping time
		$\tau^{\star}$ as
		\begin{equation*}
		\tau^{\star}:=T_{0}\wedge\inf\{T>0\big|\|a(T)\|_{\alpha}>\varepsilon^{-\kappa}\quad or \quad\|b(T)\|_{\alpha}>\varepsilon^{-3\kappa}\}.
		\end{equation*}
	\end{definition}

	\subsection{Bounds of fast modes}
	Set $V(T):=\varepsilon^{-1}\mathcal{L}_{s}a+\mathcal{L}_{s}b+B_{s}(b)+2\varepsilon^{-1}B_{s}(a,b)+\varepsilon^{-2}B_{s}(a).$
	Writing the mild solution of $b(T)$:
	\begin{align*}
	b(T)&=e^{\varepsilon^{-2}\mathcal{A}_{s}T}b(0)+\int^{T}_{0}e^{\varepsilon^{-2}\mathcal{A}_{s}(T-s)}V(s)\textup{d}s+\varepsilon^{-1}\int^{T}_{0}e^{\varepsilon^{-2}\mathcal{A}_{s}(T-s)}\widetilde{G}_{s}\textup{d}\widetilde{W}\\
	&~~~~+\varepsilon^{-2}\int^{T}_{0}e^{\varepsilon^{-2}\mathcal{A}_{s}(T-s)}\bar{G}_{s}(\varepsilon a+\varepsilon^{2}b)\textup{d}\widetilde{W}.\\
	\end{align*}
	In the following, we will bound $b(T)$ after estimating each part of it.
	\begin{lemma}\label{lem001}
		Under Assumption \ref{assu1}, \ref{assu2}, \ref{assu3}, \ref{assu5}, \ref{assu6}, for $p>1$, there exists a positive constant $C$ such that
		\begin{equation*}
		\mathbb{E}\Big(\sup_{0\leq T\leq \tau^{\star}}  \|\int^{T}_{0}
		e^{\varepsilon^{-2}\mathcal{A}_{s}(T-s)} V(s)\textup{d}s\|_{\alpha}^{p}\Big)\leq C\varepsilon^{-2\kappa p}.
		\end{equation*}
	\end{lemma}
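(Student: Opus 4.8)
The plan is to estimate the deterministic convolution integral $\int_0^T e^{\varepsilon^{-2}\mathcal{A}_s(T-s)} V(s)\,\mathrm{d}s$ termwise, using Lemma \ref{l51} to absorb the negative powers of $\varepsilon$ hidden in $\mathcal{A}_s$. First I would fix $\rho\in(\lambda_n,\lambda_{n+1}]$ and observe that on $\mathcal{S}$ the semigroup satisfies, via Lemma \ref{l51} with a suitable choice of the loss-of-regularity exponent $\beta$, a bound of the form $\|e^{\varepsilon^{-2}\mathcal{A}_s(T-s)}P_s w\|_\alpha \le M\big(\varepsilon^{-2}(T-s)\big)^{-\beta/m} e^{-\rho\varepsilon^{-2}(T-s)}\|P_s w\|_{\alpha-\beta}$, and that $\int_0^\infty (\varepsilon^{-2}r)^{-\beta/m} e^{-\rho\varepsilon^{-2}r}\,\mathrm{d}r = \varepsilon^{2}\rho^{-(1-\beta/m)}\Gamma(1-\beta/m)$ is $\mathcal{O}(\varepsilon^{2})$ since $\beta/m<1$ by Assumption \ref{assu2}. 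Thus each smoothing convolution gains a factor $\varepsilon^{2}$, which must beat the negative powers of $\varepsilon$ sitting in front of the five summands of $V$.

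The main work is then bookkeeping on $V(s)=\varepsilon^{-1}\mathcal{L}_s a+\mathcal{L}_s b+B_s(b)+2\varepsilon^{-1}B_s(a,b)+\varepsilon^{-2}B_s(a)$, carried out on the interval $[0,\tau^\star]$ where Definition \ref{defs1} guarantees $\|a\|_\alpha\le\varepsilon^{-\kappa}$ and $\|b\|_\alpha\le\varepsilon^{-3\kappa}$. By Assumptions \ref{assu2} and \ref{assu3}, $\mathcal{L}_s$ and $B_s$ map into $\mathcal{H}^{\alpha-\beta}$ with the appropriate bounds, so on $[0,\tau^\star]$ we get pointwise-in-$s$ estimates $\|\mathcal{L}_s a\|_{\alpha-\beta}\lesssim\varepsilon^{-\kappa}$, $\|\mathcal{L}_s b\|_{\alpha-\beta}\lesssim\varepsilon^{-3\kappa}$, $\|B_s(b)\|_{\alpha-\beta}\lesssim\varepsilon^{-6\kappa}$, $\|B_s(a,b)\|_{\alpha-\beta}\lesssim\varepsilon^{-4\kappa}$ and $\|B_s(a)\|_{\alpha-\beta}\lesssim\varepsilon^{-2\kappa}$. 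Multiplying by the prefactors $\varepsilon^{-1},1,1,\varepsilon^{-1},\varepsilon^{-2}$ respectively, the dominant contribution comes from $\varepsilon^{-2}B_s(a)$ (order $\varepsilon^{-2-2\kappa}$) and from $B_s(b)$ (order $\varepsilon^{-6\kappa}$); since $\kappa<2/19<1/2$ one checks $\varepsilon^{-6\kappa}\le\varepsilon^{-2-2\kappa}$ is false in general, so in fact $\sup_{0\le s\le\tau^\star}\|V(s)\|_{\alpha-\beta}\lesssim \varepsilon^{-2-2\kappa}+\varepsilon^{-6\kappa}$; applying the smoothing factor $\varepsilon^{2}$ collapses the first term to $\varepsilon^{-2\kappa}$ and the second to $\varepsilon^{2-6\kappa}$, which is negligible against $\varepsilon^{-2\kappa}$ again because $\kappa<1/2$. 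Taking the supremum over $T\in[0,\tau^\star]$ inside the deterministic integral and then the $p$-th moment is immediate since no stochastic integral is involved — the bound is pathwise, and $\mathbb{E}$ just passes through.

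The one genuinely delicate point is the interplay between the loss of regularity $\beta$ and the blow-up $s^{-\beta/m}$ of the semigroup bound at $s=0$: I need $\beta/m<1$ so that the time integral $\int_0^T(\varepsilon^{-2}(T-s))^{-\beta/m}e^{-\rho\varepsilon^{-2}(T-s)}\mathrm{d}s$ converges and yields exactly one clean factor of $\varepsilon^{2}$, and this is precisely guaranteed by Assumption \ref{assu2}'s requirement $\beta\in[0,m)$. Everything else is a routine application of the triangle inequality, the $\tau^\star$ a priori bounds, and the arithmetic $\kappa\in(0,2/19)$ which makes $-2\kappa p$ the worst exponent. I would therefore organise the proof as: (i) state the semigroup convolution estimate with the $\varepsilon^{2}$ gain; (ii) bound each of the five terms of $V$ on $[0,\tau^\star]$; (iii) combine, observing that after the $\varepsilon^2$ gain the $\varepsilon^{-2}B_s(a)$ term dominates and produces $\varepsilon^{-2\kappa}$; (iv) take $\sup_T$ and $\mathbb{E}(\cdot)^p$.
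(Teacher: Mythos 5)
Your proposal is correct and follows essentially the same route as the paper: apply Lemma \ref{l51} to the convolution kernel, extract the factor $\varepsilon^{2}$ from the time integral, and bound $\sup_{0\le s\le\tau^{\star}}\|V(s)\|_{\alpha-\beta}$ termwise via the stopping-time bounds of Definition \ref{defs1}, with the term $\varepsilon^{-2}B_{s}(a)$ of order $\varepsilon^{-2-2\kappa}$ dominating. (One small slip: for $\varepsilon\in(0,1)$ and $\kappa<\tfrac{1}{2}$ one has $\varepsilon^{-6\kappa}\le\varepsilon^{-2-2\kappa}$, so the $B_{s}(b)$ contribution is already subordinate even before the smoothing gain; this does not affect your conclusion.)
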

	\begin{proof}
		For $p>1$, by Lemma \ref{l51}, we have
		\begin{align*}
		&~~\mathbb{E}\Big(\sup_{0\leq T\leq \tau^{\star}}  \|\int^{T}_{0}
		e^{\varepsilon^{-2}\mathcal{A}_{s}(T-s)} V(s)\textup{d}s\|_{\alpha}^{p}\Big)\\
		&\leq \mathbb{E}\Big(\sup_{0\leq T\leq \tau^{\star}}  (\int^{T}_{0}
		e^{\varepsilon^{-2}\mathcal{A}_{s}(T-s)} \|V(s)\|_{\alpha}\textup{d}s)^{p}\Big)\\
		&\leq \mathbb{E}\Big(\sup_{0\leq T\leq \tau^{\star}}  (\int^{T}_{0}M(T-s)^{-\frac{\beta}{m}}e^{\rho(T-s)}\|V(s)\|_{\alpha-\beta}\textup{d}s)^{p}\Big)\\
		&\leq C\varepsilon^{2p}\mathbb{E}\Big( \sup_{0\leq  T\leq \tau^{\star}}\|V(T)\|^{p}_{\alpha-\beta}\Big).
		\end{align*}
		Then, based on Assumption \ref{assu2}-\ref{assu3}, it is easy to obtain the desired result.
		
		We complete the proof.
	\end{proof}
	
	Let $\mathcal{Z}(T)=\varepsilon^{-1}\int^{T}_{0}e^{\varepsilon^{-2}\mathcal{A}_{s}(T-s)}\widetilde{G}_{s}\textup{d}\widetilde{W}=\sum\limits^{N}_{k=n+1}\mathcal{Z}_{k}(T)e_{k}$,
	where
	\begin{equation*}
	\mathcal{Z}_{k}(T)=\varepsilon^{-1}\alpha_{k}\int^{T}_{0}e^{-\varepsilon^{-2}\lambda_{k}(T-s)}\textup{d}\tilde{\beta}_{k}(s)
	\end{equation*}
	with $\tilde{\beta}_{k}(s):=\varepsilon\beta_{k}(\varepsilon^{-2}s)$.
	Then, according to Lemma 14 in \cite{Bl3}, we have the following lemma.
	\begin{lemma}\label{l03}
		Under Assumption \ref{assu1}, \ref{assu5}, \ref{assu6}, for $\mathcal{Z}(T)$ and $\mathcal{Z}_{k}(T)$ given by the above, there exists a positive constant $C$ depending on $p\geq1, \lambda_{k}, \alpha_{k}, \kappa>0$ and $T_{0}$, such that
		\begin{align}
		&\mathbb{E}\Big(\sup_{0\leq T\leq T_{0}}|\mathcal{Z}_{k}(t)|^{p}\Big)\leq C\varepsilon^{-\frac{\kappa}{3}},\label{eq19}\\
		&\mathbb{E}\Big(\sup_{0\leq T\leq T_{0}}\|\mathcal{Z}(t)\|_{\alpha}^{p}\Big)\leq C\varepsilon^{-\frac{\kappa}{3}}.\label{eq20}
		\end{align}
	\end{lemma}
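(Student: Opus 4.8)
The plan is to bound the Ornstein--Uhlenbeck type process $\mathcal{Z}_k(T)=\varepsilon^{-1}\alpha_k\int_0^T e^{-\varepsilon^{-2}\lambda_k(T-s)}\,\textup{d}\tilde\beta_k(s)$ and then assemble the vector estimate from the component estimates. First I would rescale back to the fast time variable: writing $t=\varepsilon^{-2}s$ and using $\tilde\beta_k(s)=\varepsilon\beta_k(\varepsilon^{-2}s)$, one checks that $\mathcal{Z}_k(\varepsilon^2 t)$ equals $\alpha_k\int_0^t e^{-\lambda_k(t-r)}\,\textup{d}\beta_k(r)$, a genuine stationary-in-the-limit OU process driven by the standard Brownian motion $\beta_k$, so that $\sup_{0\le T\le T_0}|\mathcal{Z}_k(T)|=\sup_{0\le t\le \varepsilon^{-2}T_0}|\alpha_k\int_0^t e^{-\lambda_k(t-r)}\,\textup{d}\beta_k(r)|$. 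Since $\lambda_k>0$ for $k>n$ (these are the fast modes), the exponential kernel is genuinely damping; the standard fact—this is exactly Lemma 14 in \cite{Bl3}—is that the $p$-th moment of the supremum of such an OU process over a time window of length $L$ grows at most like a power of $\log L$, hence is $\mathcal{O}((\varepsilon^{-2})^{\delta})$ for every $\delta>0$, in particular $\mathcal{O}(\varepsilon^{-\kappa/3})$.

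Concretely, the key steps are: (i) reduce to the fast-time OU process as above; (ii) apply the factorization (stochastic Fubini / Da Prato--Kwapie\'n--Zabczyk) method or simply the Burkholder--Davis--Gundy inequality together with the Garsia--Rodemich--Rumsey lemma to the martingale $\int_0^t e^{\lambda_k r}\,\textup{d}\beta_k(r)$, yielding a Hölder modulus that, after dividing by the growing factor $e^{\lambda_k t}$, gives $\mathbb{E}\sup_{t\le L}|\mathcal{Z}_k|^p\le C(1+\log L)^{p/2}$; (iii) observe $(1+\log(\varepsilon^{-2}T_0))^{p/2}\le C\varepsilon^{-\kappa/3}$ for $\varepsilon$ small, since any negative power of $\varepsilon$ dominates any power of $\log(1/\varepsilon)$, which proves (\ref{eq19}); (iv) for (\ref{eq20}), use $\|\mathcal{Z}(T)\|_\alpha^2=\sum_{k=n+1}^N(\lambda_k+1)^\alpha|\mathcal{Z}_k(T)|^2$, a finite sum because $\alpha_k=0$ for $k>N$, so that $\mathbb{E}\sup_T\|\mathcal{Z}(T)\|_\alpha^p\le C\sum_{k=n+1}^N(\lambda_k+1)^{\alpha p/2}\mathbb{E}\sup_T|\mathcal{Z}_k(T)|^p$ by the triangle inequality in $\ell^2$ and in $L^p(\Omega)$, and each summand is controlled by step (iii). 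The constant $C$ then absorbs $N$, the $\lambda_k$, the $\alpha_k$, $p$, $\kappa$ and $T_0$, exactly as claimed.

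The main obstacle, and the only genuinely non-routine point, is step (ii): getting a bound on the \emph{supremum} over the long interval $[0,\varepsilon^{-2}T_0]$ that is only logarithmic (hence sub-polynomial) in the length. A crude BDG bound on $\sup_{t\le L}|\int_0^t e^{-\lambda_k(t-r)}\textup{d}\beta_k(r)|$ applied naively gives something like $L^{1/2}$, which is far too weak (it would be $\varepsilon^{-1}$, not $\varepsilon^{-\kappa/3}$). The correct route is to exploit the stationarity/mixing of the OU process: either chop $[0,\varepsilon^{-2}T_0]$ into $\mathcal{O}(\varepsilon^{-2})$ unit subintervals, bound the supremum on each by a mean-zero random variable with Gaussian tails uniform in the subinterval, and take a union bound (the maximum of $\mathcal{O}(\varepsilon^{-2})$ such variables has $p$-th moment $\mathcal{O}((\log\varepsilon^{-2})^{p/2})$), or invoke directly the Gaussian supremum estimates underlying Lemma 14 in \cite{Bl3}. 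Since the excerpt explicitly permits citing that lemma, I would simply verify that our $\mathcal{Z}_k$ fits its hypotheses after the rescaling in step (i) and quote it, relegating the Gaussian-tail bookkeeping to the reference; the rest of the proof is then the elementary observation in steps (iii)--(iv) that $\log(1/\varepsilon)\ll\varepsilon^{-\kappa/3}$ and that a finite sum of good estimates is a good estimate.
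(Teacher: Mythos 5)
Your proposal is correct and follows essentially the same route as the paper, which establishes the lemma simply by invoking Lemma 14 of \cite{Bl3} after the rescaling $\mathcal{Z}_k(\varepsilon^2 t)=\alpha_k\int_0^t e^{-\lambda_k(t-r)}\,\textup{d}\beta_k(r)$. Your additional sketch of the chopping/union-bound argument behind the logarithmic supremum bound, and the reduction of (\ref{eq20}) to the finitely many nonzero components, merely fills in details the paper delegates to that reference.
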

	\begin{lemma}\label{lem002}
		Under Assumption \ref{assu1}, \ref{assu2}, \ref{assu3}, \ref{assu5}, \ref{assu6}, for $p>1$, there exists a positive constant $C$ such that
		\begin{equation*}
		\mathbb{E}\Big(\sup_{0\leq T\leq \tau^{\star}}  \|
		\int^{T}_{0}e^{\varepsilon^{-2}\mathcal{A}_{s}(T-s)}\bar{G}_{s}(\varepsilon a+\varepsilon^{2}b)\textup{d}\widetilde{W}\|_{\alpha}^{p}\Big)\leq C\varepsilon^{2-2\kappa p}.
		\end{equation*}
	\end{lemma}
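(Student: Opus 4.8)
The plan is to bound the stochastic convolution term
$$\Psi(T):=\int_0^T e^{\varepsilon^{-2}\mathcal{A}_s(T-s)}\bar{G}_s(\varepsilon a+\varepsilon^2 b)\,\textup{d}\widetilde{W}$$
by combining the factorization (stochastic Fubini) method with the a priori bounds available up to the stopping time $\tau^\star$. Because we are on $[0,\tau^\star]$, Definition \ref{defs1} gives $\|a(s)\|_\alpha\leq\varepsilon^{-\kappa}$ and $\|b(s)\|_\alpha\leq\varepsilon^{-3\kappa}$, hence $\|\varepsilon a+\varepsilon^2 b\|_\alpha\leq \varepsilon^{1-\kappa}+\varepsilon^{2-3\kappa}\leq C\varepsilon^{1-\kappa}\leq r$ for $\varepsilon$ small; so the first bound in (\ref{eq26}) applies with a fixed $r$ and yields the pointwise estimate $\|\bar{G}_s(\varepsilon a+\varepsilon^2 b)\|_{\mathscr{L}_2(U,\mathcal{H}^\alpha)}\leq l_r\|\varepsilon a+\varepsilon^2 b\|_\alpha\leq C\varepsilon^{1-\kappa}$ for all $s\leq\tau^\star$.

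The key steps, in order: first, introduce the cut-off version of $\bar G_s$ (replace the integrand by $\bar G_s(\varepsilon a+\varepsilon^2 b)\mathbf{1}_{\{s\leq\tau^\star\}}$) so that the supremum over $[0,\tau^\star]$ of $\|\Psi\|_\alpha^p$ is dominated by the supremum over $[0,T_0]$ of the stopped convolution, and the integrand now has a deterministic bound $C\varepsilon^{1-\kappa}$ uniformly in $\omega$ and $s$. Second, apply the factorization lemma: write $\Psi(T)=c_\eta\int_0^T(T-s)^{\eta-1}e^{\varepsilon^{-2}\mathcal{A}_s(T-s)}Y(s)\,\textup{d}s$ with $Y(s)=\int_0^s(s-\sigma)^{-\eta}e^{\varepsilon^{-2}\mathcal{A}_s(s-\sigma)}\bar G_s\,\textup{d}\widetilde W$ for a suitable $\eta\in(0,1/2)$, chosen with $\eta>\beta/m$ so that the outer deterministic kernel $(T-s)^{\eta-1-\beta/m}e^{-\rho(T-s)}$ (after applying Lemma \ref{l51}) is integrable and contributes a factor $\varepsilon^{2}$ from the $e^{-\rho(T-s)}$ time rescaling, exactly as in Lemma \ref{lem001}. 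Third, estimate $\mathbb{E}\|Y(s)\|_\alpha^p$ by the Burkholder–Davis–Gundy inequality in the Hilbert space $\mathcal{H}^\alpha$: $\mathbb{E}\|Y(s)\|_\alpha^p\leq C\,\mathbb{E}\big(\int_0^s(s-\sigma)^{-2\eta}\|e^{\varepsilon^{-2}\mathcal{A}_s(s-\sigma)}\bar G_s\|_{\mathscr{L}_2(U,\mathcal{H}^\alpha)}^2\,\textup{d}\sigma\big)^{p/2}$, and use Lemma \ref{l51} (with $\beta=0$ on this piece, or with a small smoothing exponent) together with the uniform bound $\|\bar G_s\|_{\mathscr{L}_2}\leq C\varepsilon^{1-\kappa}$ to get $\int_0^s(s-\sigma)^{-2\eta}e^{-2\rho(s-\sigma)}\,\textup{d}\sigma\,(C\varepsilon^{1-\kappa})^2\leq C\varepsilon^{2-2\kappa}$ after the time rescaling (the $\varepsilon^{-2}$ in the exponent rescales the integration variable and, since $-2\eta>-1$, the singular kernel is harmless and produces only constants and powers of $\varepsilon^2$). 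Finally, feed this into the outer integral and use Hölder/Jensen to pass from the integral over $s$ to a supremum, collecting the powers: $\varepsilon^{2}$ from the outer kernel times $\varepsilon^{(2-2\kappa)}$... — one has to be careful here that the $p$-th power bookkeeping gives $C\varepsilon^{2-2\kappa p}$ exactly as claimed, the $2$ coming from the factorization kernel and the $-2\kappa p$ from the $p$-th power of the $\varepsilon^{1-\kappa}$ bound on $\bar G$ compounded with the $\varepsilon^{2}$ rescalings.

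The main obstacle I expect is the careful tracking of $\varepsilon$-powers through the factorization argument together with the time rescaling $T=\varepsilon^2 t$: the semigroup is $e^{\varepsilon^{-2}\mathcal{A}_s(T-s)}$, so the spectral gap $e^{-\rho t}$ in Lemma \ref{l51} becomes $e^{-\varepsilon^{-2}\rho(T-s)}$, and each application of this exponential decay against a polynomial kernel $(T-s)^{\gamma-1}$ produces (by substituting $\tau=\varepsilon^{-2}(T-s)$) a factor $\varepsilon^{2\gamma}$ rather than $\varepsilon^2$; one must choose the factorization exponent $\eta$ and split the smoothing exponent $\beta/m$ between the inner and outer integrals so that all resulting Gamma-type integrals converge and the total $\varepsilon$-power is $2-2\kappa p$ (and not, say, $2p-2\kappa p$ or something smaller). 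The secondary technical point is ensuring the integrand after cut-off is genuinely bounded uniformly in $\omega$ so that BDG in $\mathcal{H}^\alpha$ applies cleanly — this is where Assumption \ref{assu6}, specifically the linear growth bound in (\ref{eq26}) and $\bar G(0)=0$, is essential, since without $\bar G(0)=0$ we would only get $C\varepsilon^0$ rather than the gain $\varepsilon^{1-\kappa}$ from the smallness of $\varepsilon a+\varepsilon^2 b$.
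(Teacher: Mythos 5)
Your proposal follows essentially the same route as the paper: the factorization (stochastic Fubini) representation, Burkholder--Davis--Gundy for the inner singular convolution, the stopping-time bounds $\|a\|_\alpha\leq\varepsilon^{-\kappa}$, $\|b\|_\alpha\leq\varepsilon^{-3\kappa}$ combined with $\bar G(0)=0$ and the linear growth in (\ref{eq26}) to gain $\varepsilon^{1-\kappa}$, and the $\varepsilon^{2\gamma}$ factors from rescaling the semigroup decay against the polynomial kernels. The power bookkeeping you flag does close (the paper obtains $\varepsilon^{2p-\kappa p-2}$ for $p>2$, which dominates the stated $\varepsilon^{2-2\kappa p}$, with $1<p\leq 2$ handled by H\"older), and note that no splitting of the smoothing exponent $\beta/m$ is needed since $\bar G$ already maps into $\mathscr{L}_2(U,\mathcal{H}^\alpha)$.
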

	\begin{proof}
		We complete the proof with the help of  factorization method.
		For $p>2$, we choose $\gamma\in (\frac{1}{p},\frac{1}{2})$, and introduce
		\begin{align}
		\mathcal{D}(T)=\int^{T}_{0}(T-s)^{-\gamma}e^{\mathcal{A}_{s}(T-s)\varepsilon^{-2}}\bar{G}_{s}(\varepsilon a(s)+\varepsilon^{2}b(s))\textup{d}\tilde{W}.
		\end{align}
		By Stochastic Fubini Theorem, we know
		\begin{align*}
		\int^{T}_{0}e^{\mathcal{A}_{s}(T-s)\varepsilon^{-2}}\bar{G}_{s}(\varepsilon a(s)+\varepsilon^{2}b(s))\textup{d}\tilde{W}=C_{\gamma}\int^{T}_{0}(T-s)^{\gamma-1}e^{\mathcal{A}_{s}(T-s)\varepsilon^{-2}}\mathcal{D}(s)\textup{d}s,
		\end{align*}
		where $C_{\gamma}$ is a constant dependent of $\gamma$.
		By Lemma \ref{l51} and H\"{o}lder inequality, we further deduce
		\begin{align*}
		&~~\|\int^{T}_{0}e^{\mathcal{A}_{s}(T-s)\varepsilon^{-2}}\bar{G}_{s}(\varepsilon a(s)+\varepsilon^{2}b(s))\textup{d}\tilde{W}\|_{\alpha}^{p}\\
		&\leq C\Big(\int^{T}_{0}(T-s)^{\gamma-1}e^{-\rho(T-s)\varepsilon^{-2}}\|\mathcal{D}(s)\|_{\alpha}\textup{d}s\Big)^{p}\\
		&\leq C\Big(\int^{T}_{0}(T-s)^{\frac{p(\gamma-1)}{p-1}}e^{\frac{-\rho(T-s)\varepsilon^{-2}p}{p-1}}\textup{d}s\Big)^{p-1}\int^{T}_{0}\|\mathcal{D}(s)\|_{\alpha}^{p}\textup{d}s\\
		&\leq C\varepsilon^{2\gamma p-2}\int^{T}_{0}\|\mathcal{D}(s)\|_{\alpha}^{p}\textup{d}s.
		\end{align*}	
		By Lemma \ref{l51}, Burkholder-Davis-Gundy inequality and Assumption \ref{assu5}, we obtain the moments of $\mathcal{D}(T)$ as follows:
		\begin{align*}
		\mathbb{E}\|\mathcal{D}(T)\|_{\alpha}^{p}&\leq C\mathbb{E}\Big(\int^{T}_{0}(T-s)^{-2\gamma}\|e^{\mathcal{A}_{s}(t-s)\varepsilon^{-2}       }\bar{G}_{s}(\varepsilon a+\varepsilon^{2}b)\|^{2}_{\mathscr{L}_{2}(U,\mathcal{H}^{\alpha})  }\textup{d}s\Big)^{\frac{p}{2}}\\
		&\leq C\mathbb{E}\Big(\int^{T}_{0}(T-s)^{-2\gamma}e^{-\rho(t-s)\varepsilon^{-2}       }\mathbb{I}_{[0,\tau^{\star}]}(s)\|\varepsilon a+\varepsilon^{2}b)\|^{2}_{\alpha}\textup{d}s\Big)^{\frac{p}{2}}\\
		&\leq C\varepsilon^{p-2\gamma p}\mathbb{E}\sup_{0\leq T\leq T^{\star}}\|\varepsilon a(T)+\varepsilon^{2}b(T)\|_{\alpha}^{p}\\
		&\leq C\varepsilon^{2p-2\gamma p-\kappa p}.
		\end{align*}
		We conclude
		\begin{align*}
		\mathbb{E}\Big(\sup_{0\leq  T\leq \tau^{\star}}\|\int^{T}_{0}e^{\mathcal{A}_{s}(T-s)\varepsilon^{-2}}\bar{G}_{s}(\varepsilon a(s)+\varepsilon^{2}b(s))\textup{d}\tilde{W}\|_{\alpha}^{p}\Big)\leq C\varepsilon^{2-\kappa p-2}.
		\end{align*}
		Then, we can achieve this proof by H\"{o}lder inequality,.
		
		We complete the proof.
	\end{proof}
	
	Combining Lemma \ref{lem001}-\ref{lem002}, we give the bound of $b(T)$ by triangle inequality.
	\begin{lemma}\label{lem13}
		Under Assumption \ref{assu1}, \ref{assu2}, \ref{assu3}, \ref{assu5}, \ref{assu6}, for $p>1$, there exists a positive constant $C$ such that
		\begin{equation*}
		\mathbb{E}\Big(\sup_{0\leq T\leq \tau^{\star}}\|b(T)\|^{p}\Big)\leq C\|b(0)\|^{p}+C\varepsilon^{-2\kappa p}.
		\end{equation*}
	\end{lemma}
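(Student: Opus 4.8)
The plan is to bound $b(T)$ directly from its mild-solution representation by estimating each of the four terms separately and then combining via the triangle inequality in $L^p(\Omega)$. Write
\begin{align*}
b(T)=e^{\varepsilon^{-2}\mathcal{A}_{s}T}b(0)+\int^{T}_{0}e^{\varepsilon^{-2}\mathcal{A}_{s}(T-s)}V(s)\textup{d}s+\varepsilon\mathcal{Z}(T)+\int^{T}_{0}e^{\varepsilon^{-2}\mathcal{A}_{s}(T-s)}\bar{G}_{s}(\varepsilon a+\varepsilon^{2}b)\textup{d}\widetilde{W},
\end{align*}
where I have pulled out a factor of $\varepsilon$ from the third term so that $\varepsilon\mathcal{Z}(T)$ with $\mathcal{Z}$ exactly as defined before Lemma \ref{l03}. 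For the first term, Lemma \ref{l51} gives $\|e^{\varepsilon^{-2}\mathcal{A}_{s}T}b(0)\|_{\alpha}\leq M e^{-\rho\varepsilon^{-2}T}\|b(0)\|_{\alpha}\leq M\|b(0)\|_{\alpha}$ uniformly in $T\geq0$, so $\mathbb{E}(\sup_{0\leq T\leq\tau^{\star}}\|e^{\varepsilon^{-2}\mathcal{A}_{s}T}b(0)\|_{\alpha}^{p})\leq C\|b(0)\|^{p}$.

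Next, the deterministic convolution term is controlled by Lemma \ref{lem001}, which bounds its $p$-th moment by $C\varepsilon^{-2\kappa p}$. The stochastic convolution coming from $\bar G$ is controlled by Lemma \ref{lem002}, giving $C\varepsilon^{2-2\kappa p}$, which for small $\varepsilon$ is dominated by $C\varepsilon^{-2\kappa p}$. Finally, $\mathbb{E}(\sup_{0\leq T\leq\tau^{\star}}\|\varepsilon\mathcal{Z}(T)\|_{\alpha}^{p})=\varepsilon^{p}\mathbb{E}(\sup_{0\leq T\leq T_{0}}\|\mathcal{Z}(T)\|_{\alpha}^{p})\leq C\varepsilon^{p-\kappa/3}$ by Lemma \ref{l03}; since $\kappa<2/19$ this is also $\mathcal{O}(\varepsilon^{-2\kappa p})$ (indeed far smaller). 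Using $(\sum_{i=1}^{4}x_{i})^{p}\leq 4^{p-1}\sum_{i=1}^{4}x_{i}^{p}$ and taking expectations of the supremum over $[0,\tau^{\star}]$ term by term, I collect the four bounds into
\begin{align*}
\mathbb{E}\Big(\sup_{0\leq T\leq\tau^{\star}}\|b(T)\|^{p}\Big)\leq C\|b(0)\|^{p}+C\varepsilon^{-2\kappa p}+C\varepsilon^{2-2\kappa p}+C\varepsilon^{p-\kappa/3}\leq C\|b(0)\|^{p}+C\varepsilon^{-2\kappa p},
\end{align*}
which is the claim. (Here $\|\cdot\|$ versus $\|\cdot\|_{\alpha}$ should be read consistently with the ambient space; the estimate is stated with $\|\cdot\|$ but the proof naturally produces the stronger $\|\cdot\|_{\alpha}$ bound.)

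There is no serious obstacle remaining: all the heavy lifting — the factorization argument for the $\bar G$-convolution and the sharp small-$\varepsilon$ estimate for $\mathcal{Z}$ — has already been done in Lemmas \ref{lem001}, \ref{l03}, and \ref{lem002}. The only point requiring a little care is the case $1<p\leq2$: Lemmas \ref{lem001} and \ref{lem002} are stated for $p>1$ and $p>2$ respectively, so for $p\in(1,2]$ one first applies the result for some $p'>2$ and then uses Jensen's (or Hölder's) inequality $\mathbb{E}(X^{p})\leq(\mathbb{E}(X^{p'}))^{p/p'}$ together with the fact that a bound of the form $C\varepsilon^{-2\kappa p'}$ raised to the power $p/p'$ gives $C\varepsilon^{-2\kappa p}$, and similarly for the other terms. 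Everything else is just assembling the pieces.
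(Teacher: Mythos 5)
Your proof follows exactly the route the paper takes: the paper's entire argument for this lemma is ``combine Lemma \ref{lem001}--\ref{lem002} and the triangle inequality'' applied to the mild-solution formula for $b(T)$, and that is what you do, including the correct extra observations about the initial-condition term and the H\"older step for $1<p\leq 2$. The one thing to fix is your transcription of the mild solution. From (\ref{eq003}) the diffusion coefficient is $\varepsilon^{-1}\widetilde{G}_{s}+\varepsilon^{-2}\bar{G}_{s}(\varepsilon a+\varepsilon^{2}b)$, so the third term of the mild solution is exactly $\mathcal{Z}(T)$ as defined before Lemma \ref{l03} (not $\varepsilon\mathcal{Z}(T)$ --- there is no factor of $\varepsilon$ to pull out), and the fourth term carries a prefactor $\varepsilon^{-2}$ that you have dropped. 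The first slip is harmless: Lemma \ref{l03} gives $\mathbb{E}\sup\|\mathcal{Z}\|_{\alpha}^{p}\leq C\varepsilon^{-\kappa/3}\leq C\varepsilon^{-2\kappa p}$ anyway. The second one matters if Lemma \ref{lem002} is read literally: restoring the prefactor multiplies its bound by $\varepsilon^{-2p}$, and $\varepsilon^{-2p}\cdot\varepsilon^{2-2\kappa p}$ is not $\mathcal{O}(\varepsilon^{-2\kappa p})$ for $p>1$, so your assembly of the four bounds would no longer close. To be fair, the paper's own one-line proof has the same ambiguity (Lemma \ref{lem002} is evidently meant to control the $\bar{G}_{s}$-convolution as it actually enters $b(T)$, prefactor included), but a careful write-up should either state the decomposition with the correct $\varepsilon^{-2}$ and invoke a version of Lemma \ref{lem002} that covers it, or rerun the factorization estimate for the prefactored term; as written, the step ``controlled by Lemma \ref{lem002}'' does not literally apply to a term that appears in $b(T)$.
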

	\subsection{Amplitude equation}
	We now turn to obtain the amplitude equation by effective information from the dominant part $a(T)$. Observing (\ref{eq001}), we notice that $\mathcal{A}_{s}b=B_{s}(a,a)+$ high order terms, so one attempt to
	replace $b$ by $\mathcal{A}_{s}^{-1}B_{s}(a,a)$ in the drift part of (\ref{eq002}). As for the diffusion term, we choose the linearization of $\bar{G}$ and neglect information about $b(T)$.
	In what follows, we make this intuitive idea rigorous.
	
	Applying It$\hat{\textup{o}}$'s formula to $B_{c}(a,\mathcal{A}_{s}^{-1}b)$, we rewrite $a(T)$ as
	\begin{equation}\label{eq011}
	a(T)=a(0)+\int^{T}_{0}[\mathcal{L}_{c}a+2\mathcal{F}(a)]\textup{d}s+\int^{T}_{0}[\widetilde{G}_{c}+\bar{G}_{c}'(0)a]\textup{d}\widetilde{W}+R(T),
	\end{equation}
	where
	\begin{align*}
	R(T)
	&=2\varepsilon^{2}B_{c}(a(T),\mathcal{A}_{s}^{-1}b(T))-2\varepsilon^{2}B_{c}(a(0),\mathcal{A}_{s}^{-1}b(0))+\varepsilon\int_{0}^{T}\mathcal{L}_{s}b\textup{d}s+\varepsilon\int_{0}^{T}B_{c}(b,b)\textup{d}s\\
	&\quad-2\varepsilon^{2}\int_{0}^{T}B_{c}(\mathcal{L}_{c}a,\mathcal{A}_{s}^{-1}b)\textup{d}s-2\varepsilon^{3}\int_{0}^{T}B_{c}(\mathcal{L}_{c}b,\mathcal{A}_{s}^{-1}b)\textup{d}s\\
	&\quad-2\varepsilon^{3}\int_{0}^{T}B_{c}(B_{c}(b,b),\mathcal{A}_{s}^{-1}b)\textup{d}s-4\varepsilon^{2}\int_{0}^{T}B_{c}(B_{c}(a,b),\mathcal{A}_{s}^{-1}b)\textup{d}s\\
	&\quad -2\varepsilon\int_{0}^{T}(B_{c}(a,\mathcal{A}_{s}^{-1}\mathcal{L}_{s}a))\textup{d}s -2\varepsilon\int_{0}^{T}(B_{c}(a,\mathcal{A}_{s}^{-1}\mathcal{L}_{s}b))\textup{d}s\\
	&\quad-2\varepsilon^{2}\int_{0}^{T}B_{c}(a,\mathcal{A}_{s}^{-1}B_{s}(b,b))\textup{d}s-4\varepsilon\int_{0}^{T}B_{c}(a,\mathcal{A}_{s}^{-1}B_{s}(a,b))\textup{d}s\\
	&\quad-2\int^{T}_{0}\sum^{\infty}_{j=1}B_{c}(\bar{G}_{c}(\varepsilon a+\varepsilon^{2}b)f_{j}, \mathcal{A}_{s}^{-1}\widetilde{G}_{s}f_{j}))\textup{d}s\\
	&\quad-2\varepsilon^{-1}\int_{0}^{T}\sum^{\infty}_{j=1}B_{c}(\bar{G}_{c}(\varepsilon a+\varepsilon^{2}b)f_{j}, \mathcal{A}_{s}^{-1}\bar{G}_{s}(\varepsilon a+\varepsilon^{2}b)f_{j})\textup{d}s\\
	&\quad-2\int^{T}_{0}\sum^{\infty}_{j=1}B_{c}(\widetilde{G}_{c}f_{j}, \mathcal{A}_{s}^{-1}\bar{G}_{s}(\varepsilon a+\varepsilon^{2}b)f_{j}    )\textup{d}s-2\varepsilon\int^{T}_{0}\sum^{\infty}_{j=1}B_{c}(\widetilde{G}_{c}f_{j}, \mathcal{A}_{s}^{-1}\widetilde{G}_{s}f_{j})\textup{d}s\\
	&\quad+\int^{T}_{0}[\frac{1}{\varepsilon}\bar{G}_{c}(\varepsilon a+\varepsilon^{2}b)-\bar{G}_{c}'(0)a]\textup{d}\widetilde{W}-2\varepsilon^{2}\int^{T}_{0}B_{c}(\widetilde{G}_{c}\textup{d}\widetilde{W}, \mathcal{A}_{s}^{-1}b)\\
	&\quad-2\varepsilon\int_{0}^{T}B_{c}(\bar{G}_{c}(\varepsilon a+\varepsilon^{2}b)\textup{d}\widetilde{W},\mathcal{A}_{s}^{-1}b)-2\varepsilon\int^{T}_{0}B_{c}(a, \mathcal{A}^{-1}_{s}\widetilde{G}_{s}\textup{d}\widetilde{W})\\
	&\quad-2\int_{0}^{T}B_{c}(a, \mathcal{A}_{s}^{-1}\bar{G}_{s}(\varepsilon a+\varepsilon^{2}b)\textup{d}\widetilde{W}).\\
	\end{align*}
	Let us show the bounds of $R(T)$.
	\begin{lemma}\label{lemr}
		Under Assumption \ref{assu1}, \ref{assu2}, \ref{assu3}, \ref{assu5}, \ref{assu6}, for $p>1$, there exists a positive constant $C$ such that
		\begin{equation}\label{eq010}
		\mathbb{E}\Big(\sup_{0\leq T\leq \tau^{\star}}\|R(T)\|_{\alpha}^{p}\Big)\leq C \varepsilon^{p-9\kappa p}.
		\end{equation}
	\end{lemma}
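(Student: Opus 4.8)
The plan is to estimate $R(T)$ term by term, since $R(T)$ is a finite sum of integrals (both Lebesgue and It\^o integrals) each of which carries an explicit power of $\varepsilon$ as a prefactor. The strategy is uniform: on the stochastic interval $[0,\tau^{\star}]$ we have the a priori bounds $\|a(T)\|_{\alpha}\leq\varepsilon^{-\kappa}$ and $\|b(T)\|_{\alpha}\leq\varepsilon^{-3\kappa}$ built into the definition of $\tau^{\star}$, together with the moment bound for $\sup_{[0,\tau^{\star}]}\|b(T)\|^{p}$ from Lemma \ref{lem13} and the bound for $\mathcal{Z}$ from Lemma \ref{l03}. Combining the $\varepsilon$-prefactor of each term with the worst-case growth in $\|a\|_{\alpha}$ and $\|b\|_{\alpha}$ (counting: $B$ is bilinear, $\mathcal{A}_s^{-1}$ and the semigroup are bounded on the relevant spaces by Lemma \ref{l51}, $\mathcal{L}$ is continuous by Assumption \ref{assu2}, $\bar G$ and its derivatives satisfy the linear/bilinear bounds \eqref{eq26}), one checks that every term is $\mathcal{O}(\varepsilon^{p-9\kappa p})$ or better; the exponent $9\kappa$ is dictated by the worst terms, which are the cubic-in-$b$ terms such as $\varepsilon^{3}\int B_c(B_c(b,b),\mathcal{A}_s^{-1}b)\,\textup{d}s$ contributing $\varepsilon^{3}\cdot\varepsilon^{-9\kappa}$, and the terms like $-2\varepsilon\int B_c(a,\mathcal{A}_s^{-1}B_s(a,b))\,\textup{d}s$ and the boundary term $\varepsilon^{2}B_c(a(T),\mathcal{A}_s^{-1}b(T))$ which give $\varepsilon^{1-\kappa-3\kappa}$ and $\varepsilon^{2-\kappa-3\kappa}$ respectively — all dominated by $\varepsilon^{1-9\kappa}$ for $\kappa$ small.

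The steps I would carry out, in order: (i) take the supremum over $T\in[0,\tau^{\star}]$ inside the norm and split $\|R(T)\|_{\alpha}\leq\sum_i\|R_i(T)\|_{\alpha}$, so that by the triangle inequality in $L^p(\Omega)$ it suffices to bound each $\mathbb{E}\sup_{[0,\tau^{\star}]}\|R_i(T)\|_{\alpha}^{p}$; (ii) for the two boundary terms $\varepsilon^{2}B_c(a,\mathcal{A}_s^{-1}b)$, use boundedness of $B_c$ and $\mathcal{A}_s^{-1}$ plus the stopping-time bounds directly; (iii) for the Lebesgue-integral terms, pull out the $\varepsilon$-prefactor, use $\int_0^{T}\!\!\cdots\,\textup{d}s\leq T_0\sup_{[0,\tau^{\star}]}\|\cdots\|$, and estimate the integrand using bilinearity/trilinearity together with the $\tau^{\star}$-bounds and \eqref{eq26}; (iv) for the stochastic-integral terms — e.g. $\int[\varepsilon^{-1}\bar G_c(\varepsilon a+\varepsilon^2 b)-\bar G_c'(0)a]\,\textup{d}\widetilde W$ and the various $\int B_c(\cdots,\mathcal{A}_s^{-1}\,\cdot\,\textup{d}\widetilde W)$ — apply the Burkholder–Davis–Gundy inequality to replace the $\sup$-moment by the $L^p$-moment of the quadratic variation, then bound the Hilbert–Schmidt norm of the integrand using \eqref{eq26} and the $\tau^{\star}$-bounds; the Taylor-remainder term needs the second-order bound $\|\bar G''(u)(v,w)\|\leq l_r\|v\|_\alpha\|w\|_\alpha$, giving $\varepsilon^{-1}\cdot\varepsilon^{2}\cdot$(second order in $\varepsilon a+\varepsilon^2 b$) $=\mathcal{O}(\varepsilon^{1-2\kappa})$; (v) collect all exponents and verify the minimum over all terms is $\geq p-9\kappa p$, absorbing lower-order contributions since $\varepsilon<1$.

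The main obstacle is purely bookkeeping: there are roughly twenty terms, and one must carefully keep track, for each, of (a) the explicit $\varepsilon$-power, (b) how many factors of $a$ (each worth $\varepsilon^{-\kappa}$) and $b$ (each worth $\varepsilon^{-3\kappa}$) appear, and (c) whether a factorization/BDG argument is needed and what integrability constraint it imposes on $p$ — in particular, several stochastic-convolution terms will require the factorization trick as in Lemma \ref{lem002} (choosing $\gamma\in(1/p,1/2)$) to handle the $\sup$ over $T$, which forces $p>2$ there and then the result for general $p>1$ follows by H\"older's inequality in $\Omega$. A subtler point worth a sentence is that the singular convolution kernels $e^{\varepsilon^{-2}\mathcal{A}_s(T-s)}$ combined with $(T-s)^{-\beta/m}$ from Lemma \ref{l51} always integrate to an $\mathcal{O}(\varepsilon^{2})$ gain (as already exploited in Lemmas \ref{lem001}–\ref{lem002}), so the terms that genuinely constrain the exponent are the non-convolution ones — the boundary terms and the plain time integrals — and among these the cubic-in-$b$ integrals are the bottleneck, which is exactly why $\kappa<2/19$ is the natural restriction (so that $\varepsilon^{1-9\kappa}$ still beats the $\varepsilon^{-\kappa}$-level terms one needs to control downstream).
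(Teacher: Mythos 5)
Your proposal is correct and follows essentially the same route as the paper: term-by-term estimation of $R(T)$ using the a priori bounds $\|a\|_{\alpha}\leq\varepsilon^{-\kappa}$, $\|b\|_{\alpha}\leq\varepsilon^{-3\kappa}$ from the definition of $\tau^{\star}$, boundedness of $B$, $\mathcal{L}$ and $\mathcal{A}_{s}^{-1}$, a Taylor expansion of $\bar G$ with the second-derivative bound from \eqref{eq26}, and Burkholder--Davis--Gundy for the It\^o-integral terms (the paper writes out only a few representative terms and declares the rest analogous). Two cosmetic remarks: the stochastic integrals appearing in $R(T)$ are plain It\^o integrals with no semigroup kernel, so BDG applies directly and the factorization trick of Lemma \ref{lem002} is not actually needed here; and the genuinely worst terms are those with prefactor $\varepsilon^{1}$ (e.g.\ $\varepsilon\int B_{c}(b,b)\,\textup{d}s$ giving $\varepsilon^{1-6\kappa}$) rather than the $\varepsilon^{3}$-cubic-in-$b$ terms, though all are indeed dominated by $\varepsilon^{1-9\kappa}$ as you conclude.
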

	\begin{proof}
		Notice that all $\mathcal{H}^{\alpha}$ norms are equivalent on $\mathcal{N}$, and $\mathcal{A}_{s}^{-1}$ is a bounded linear operator from $\mathcal{H}_{\alpha-1}$ to $\mathcal{H}_{\alpha}$.
		Then, we obtain
		\begin{align*}
		&~~\mathbb{E}\Big(\sup_{0\leq T\leq \tau^{\star}}\|\varepsilon
		^{2}\int_{0}^{T}B_{c}(\mathcal{L}_{c}a,\mathcal{A}_{s}^{-1}b)\textup{d}s\|_{\alpha}^{p}\Big)\\
		&\leq C \mathbb{E}\Big(\varepsilon^{2}\int_{0}^{\tau}\|B_{c}(\mathcal{L}_{c}a,\mathcal{A}_{s}^{-1}b)\|_{\alpha}\textup{d}s\Big)^{p}\\
		&\leq C\varepsilon^{2p}\|a\|_{\alpha}\|b\|_{\alpha}\\
		&\leq C\varepsilon^{2p-4\kappa}.
		\end{align*}
		We can estimate other drift terms by the similar calculation.
		
		According to Taylor Formula, we have
		\begin{align}\label{eq0104}
		\bar{G}(\varepsilon a+\varepsilon^{2}b)=\bar{G}(0)+\varepsilon\bar{G}'(0)(a+b)+\bar{G}''(\theta)(\varepsilon a+\varepsilon^{2}b,\varepsilon a+\varepsilon^{2}b),
		\end{align}
		where $\theta$ is in the line connecting $0$ and $\varepsilon a+\varepsilon^{2}b$.\\
		Then, we apply Burkholder-Davis-Gundy inequality and (\ref{eq0104}) to derive
		\begin{align*}
		&\mathbb{E}\Big(\sup_{0\leq T\leq \tau^{\star}}\|\int^{T}_{0}[\frac{1}{\varepsilon}\bar{G}_{c}(\varepsilon a+\varepsilon^{2}b)-\bar{G}_{c}'(0)a]\textup{d}\widetilde{W}\|_{\alpha}^{p}\Big)\\
		\leq&C\mathbb{E}\Big(\sup_{0\leq T\leq \tau^{\star}}\|\frac{1}{\varepsilon}\bar{G}_{c}(\varepsilon a+\varepsilon^{2}b)-\bar{G}_{c}'(0)a\|^{p}_{\mathcal{L}_{2}(U,\mathcal{H}^{\alpha})}\Big)\\
		\leq&C\varepsilon^{p-3\kappa p}.
		\end{align*}
		Moreover, by Burkholder-Davis-Gundy inequality, we have
		\begin{align*}
		&\mathbb{E}\Big(\sup_{0\leq T\leq \tau^{\star}}\|\varepsilon\int_{0}^{T}B_{c}(\bar{G}_{c}(\varepsilon a+\varepsilon^{2}b)\textup{d}\widetilde{W},\mathcal{A}_{s}^{-1}b)\|_{\alpha}^{p}\Big)\\
		\leq&C\varepsilon^{p}\mathbb{E}\Big(\int_{0}^{\tau^{\star}}\sum^{\infty}_{j=1}\|B_{c}(\bar{G}_{c}(\varepsilon a+\varepsilon^{2}b)f_{j},\mathcal{A}_{s}^{-1}b)\|_{\alpha}^{2}\textup{d}s\Big)^{\frac{p}{2}}\\
		\leq&C\varepsilon^{p}\mathbb{E}\Big(\int_{0}^{\tau^{\star}}\sum^{\infty}_{j=1}\|B_{c}(\bar{G}_{c}(\varepsilon a+\varepsilon^{2}b)f_{j},\mathcal{A}_{s}^{-1}b)\|_{\alpha-\beta}^{2}\textup{d}s\Big)^{\frac{p}{2}}\\
		\leq&C\varepsilon^{p}\mathbb{E}\Big(\sup_{0\leq T \leq \tau^{\star}}\sum^{\infty}_{j=1}\|B_{c}(\bar{G}_{c}(\varepsilon a+\varepsilon^{2}b)f_{j},\mathcal{A}_{s}^{-1}b)\|_{\alpha-\beta}^{2}\Big)^{\frac{p}{2}}\\
		\leq&C\varepsilon^{p-3\kappa p}\mathbb{E}\Big(\sup_{0\leq T \leq \tau^{\star}}\|\bar{G}(\varepsilon a+\varepsilon^{2}b)\|^{p}_{\mathcal{L}_{2}(U,\mathcal{H}^{\alpha})}\Big)\\
		\leq&C\varepsilon^{2p-6\kappa p}.
		\end{align*}
		Similarly, we can estimate other diffusion terms, but the detail is not provided here.

		Collecting all the estimates, we own (\ref{eq010}).
		
		We complete the proof.
	\end{proof}
	
	Removing the $R(T)$ from (\ref{eq011}), we gain the amplitude equations as
	\begin{equation}\label{eq012}
	\begin{split}
	\textup{d}x&=[\mathcal{L}_{c}x+2\mathcal{F}(x)]\textup{d}T+[\widetilde{G}_{c}+\bar{G}_{c}'(0)x]\textup{d}\widetilde{W}\\
	x(0)&=a(0).
	\end{split}
	\end{equation}
	\subsection{Rigorous error analysis}
	Now let us provide the bounds of $x(T)$, and give the better estimate of $a(T)$ via $x(T)$.
	\begin{lemma}\label{boundb}
		Let Assumption \ref{assu1}, \ref{assu2}, \ref{assu3}, \ref{assu4}-\ref{assu6} hold. For $p>1$, there exists a positive constant $C$ such that
		\begin{align}
		&\mathbb{E}\Big(\sup_{0\leq T \leq T_{0}} \|x(T)\|^{p}\Big)\leq C\|a(0)\|^{p}+C.\label{eq301}\\
		\intertext{Moreover, if $\|a(0)\|\leq \varepsilon^{-\frac{\kappa}{3}}$,}
		&\mathbb{E}\Big(\sup_{0\leq T\leq \tau^{\star}}\|a(T)-x(T)\|^{p}\Big)\leq C\varepsilon^{p-18\kappa p},\notag\\
		&\mathbb{E}\Big(\sup_{0\leq T\leq \tau^{\star}}\|a(T)\|^{p}\Big)\leq C\varepsilon^{-\frac{\kappa p}{3}}.\notag
		\end{align}
	\end{lemma}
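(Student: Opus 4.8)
The plan is to prove the three estimates of Lemma~\ref{boundb} in order, each resting on the previous one and on the bounds already established for $b(T)$ and $R(T)$.

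First I would establish \eqref{eq301}, the a priori bound on $x(T)$. Since \eqref{eq012} is a finite-dimensional SDE on $\mathcal{N}$ with a cubic drift $2\mathcal{F}$ and affine diffusion $\widetilde{G}_{c}+\bar{G}_{c}'(0)x$, the natural route is It\^o's formula applied to $\|x(T)\|^{2p}$ (or first to $\|x\|^{2}$ and then a power). The crucial structural input is the dissipativity inequality \eqref{eq44} in Assumption~\ref{assu4}: taking $u=v=w=x$ there gives $\langle\mathcal{F}(x),x\rangle\le -C_{1}\|x\|^{4}+(C_{2}+C_{3})\|x\|^{4}$, which is not obviously dissipative, so I would need to use \eqref{eq44} more carefully — the point of the $w$-terms is presumably that they are controlled when one eventually compares with $a$; for the pure bound on $x$ one uses that $\langle \mathcal F(x),x\rangle \le -C_1\|x\|^4 + \text{lower order}$ after the right grouping, together with the linear term $\langle\mathcal{L}_{c}x,x\rangle\le C\|x\|^{2}$ from Assumption~\ref{assu2}. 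The diffusion contributes a term bounded by $C(1+\|x\|^{2})$ via \eqref{eq26} and the boundedness of $\widetilde{G}_{c}$. Then a standard Gronwall / moment argument on $[0,T_{0}]$ (using Burkholder--Davis--Gundy for the martingale part and absorbing the quartic dissipative term) yields \eqref{eq301}; no stopping time is needed here because the cubic drift points inward.

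Next, for the comparison estimate, I would set $h(T):=a(T)-x(T)$ and subtract \eqref{eq012} from \eqref{eq011}, obtaining
\[
h(T)=\int_{0}^{T}\big[\mathcal{L}_{c}h+2\mathcal{F}(a)-2\mathcal{F}(x)\big]\,\mathrm{d}s+\int_{0}^{T}\bar{G}_{c}'(0)h\,\mathrm{d}\widetilde{W}+R(T),
\]
since the additive noise $\widetilde{G}_{c}\,\mathrm{d}\widetilde W$ cancels. Apply It\^o's formula to $\|h(T)\|^{2p}$. The term $\langle \mathcal F(a)-\mathcal F(x),h\rangle$ is where \eqref{eq44} (with $u=h$, and $v,w$ essentially the "cross" arguments $a,x$) is really used: it produces $-C_{1}\|h\|^{4}$ plus terms of the form $\|h\|^{2}\cdot(\text{bounded moments of }a,x)$ — here I would need to be somewhat careful expanding the trilinear difference $\mathcal F(a,a,a)-\mathcal F(x,x,x)$ into a sum over $h$ in one slot and $a$ or $x$ in the others, then regroup to match the shape of \eqref{eq44}. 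The linear term $\langle\mathcal{L}_{c}h,h\rangle$ and the diffusion $\|\bar G_c'(0)h\|^2\le l_r^2\|h\|^2$ are harmless. The driving term is $R(T)$: using Lemma~\ref{lemr}, $\mathbb{E}\sup\|R\|_{\alpha}^{p}\le C\varepsilon^{p-9\kappa p}$, and after squaring inside the It\^o estimate and applying Young's inequality one picks up $\varepsilon^{2p-18\kappa p}$-type contributions, whence $\mathbb{E}\sup_{0\le T\le\tau^{\star}}\|h(T)\|^{p}\le C\varepsilon^{p-18\kappa p}$ by Gronwall. I would need the moment bounds on $a$ and $x$ up to $\tau^{\star}$ to close the Gronwall; on $[0,\tau^\star]$ we have $\|a\|_\alpha\le\varepsilon^{-\kappa}$ deterministically by definition of $\tau^\star$, and \eqref{eq301} controls $x$, so the coefficients multiplying $\|h\|^2$ have all moments.

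Finally the bound $\mathbb{E}\sup_{0\le T\le\tau^{\star}}\|a(T)\|^{p}\le C\varepsilon^{-\kappa p/3}$ follows immediately from the triangle inequality $\|a\|\le\|x\|+\|h\|$, \eqref{eq301} with $\|a(0)\|\le\varepsilon^{-\kappa/3}$ giving $\mathbb{E}\sup\|x\|^p\le C\varepsilon^{-\kappa p/3}$, and the comparison estimate giving $\mathbb{E}\sup\|h\|^p\le C\varepsilon^{p-18\kappa p}$, which for $\kappa<2/19$ is $o(\varepsilon^{-\kappa p/3})$ and hence absorbed. The main obstacle I anticipate is the bookkeeping in the second step: correctly expanding $\mathcal F(a)-\mathcal F(x)$ and matching it to the somewhat unusual form of \eqref{eq44} (with its $\|w\|^4$ and $\|w\|^2\|v\|^2$ terms rather than a clean one-sided Lipschitz/dissipativity bound), so that the quartic term in $\|h\|$ is genuinely absorbed while the remaining terms are controlled by the known moments of $a$, $x$ and the smallness of $R$.
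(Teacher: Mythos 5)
Your overall strategy coincides with the paper's: It\^o's formula plus the dissipativity condition \eqref{eq44} and Gronwall for the a priori bound \eqref{eq301}, a comparison of $a$ and $x$ driven by the remainder $R$ together with Lemma \ref{lemr}, and a triangle inequality for the final bound on $a$. Two points, however, deserve attention, one minor and one a genuine gap.

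The minor point concerns \eqref{eq301}: you assert that ``no stopping time is needed here because the cubic drift points inward.'' To take expectations in It\^o's formula and apply Burkholder--Davis--Gundy you must first know that the relevant moments are finite; the paper localizes with $\tau_{K}:=\inf\{T>0:\|x(T)\|>K\}$, runs the Gronwall argument on $[0,T_{0}\wedge\tau_{K}]$ with a constant independent of $K$, and then removes the localization by a Chebyshev estimate and monotone convergence. Your argument needs this step (or an equivalent justification) to be rigorous, even though the drift is dissipative.

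The genuine gap is in the comparison step. You set $h:=a-x$ and write its equation with $R(T)$ appearing as an additive forcing, then propose to apply It\^o's formula to $\|h\|^{2p}$ and dispose of $R$ by Young's inequality using only the sup-norm bound of Lemma \ref{lemr}. But $R(T)$ is not of bounded variation: by its explicit expression it contains several stochastic integrals (e.g.\ $\int_{0}^{T}\frac{1}{\varepsilon}\bar G_{c}(\varepsilon a+\varepsilon^{2}b)-\bar G_{c}'(0)a\,\mathrm{d}\widetilde W$ and the $B_{c}(\cdot,\mathcal A_{s}^{-1}\cdot\,\mathrm{d}\widetilde W)$ terms) as well as the boundary semimartingale $2\varepsilon^{2}B_{c}(a(T),\mathcal A_{s}^{-1}b(T))$. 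Applying It\^o's formula to $\|a-x\|^{2p}$ therefore requires the full semimartingale decomposition of $R$, and in particular produces cross-variation and quadratic-variation terms $\mathrm{d}[R^{\mathrm{mart}}]$ that a sup-norm estimate on $R$ does not control; your write-up does not address these. The paper circumvents this entirely by the change of variable $h:=x-a+R$, so that $R$ cancels from the driving noise and reappears only inside the coefficients, as $\mathcal L_{c}(h-R)$, $\mathcal F_{c}(x-h+R)$ and $\bar G_{c}'(0)(h-R)$; then It\^o's formula applies to a genuine It\^o process and Lemma \ref{lemr} is used exactly where a pathwise bound on $R$ suffices, after which $\|a-x\|\le\|h\|+\|R\|$ recovers your estimate. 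Either adopt this shift, or supply separate estimates for the martingale part of $R$ and its bracket; as written, the step does not close. Your caution about matching $\mathcal F(a)-\mathcal F(x)$ to the form of \eqref{eq44} is well placed (the paper is itself terse there), and your final triangle-inequality step is exactly the paper's.
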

	\begin{proof}
		Define some stopping time
		\begin{equation*}
		\tau_{K}:=\inf\{T>0, \|x(T)\|>K\}.
		\end{equation*}
		
		For $p\geq 2$ and $0 \leq T\leq \tau_{K}$, It$\hat{\textup{o}}^{,}$s formula yields that
		\begin{equation*}
		\begin{split}
		\|x(T)\|^{p}&\leq\|a(0)\|^{p}+p\int^{T}_{0}\|x\|^{p-2}\langle\mathcal{L}_{c}x,x\rangle\textup{d}s+2p\int^{T}_{0}\|x\|^{p-2}\langle\mathcal{F}_{c}(x),x\rangle\textup{d}s\\
		&\quad+p\int^{T}_{0}\|x\|^{p-2}\langle x,[\widetilde{G}_{c}+\bar{G}'_{c}(0)x]\textup{d}\widetilde{W}(s)\rangle+Cp(p-1)\int^{T}_{0}(\|x\|^{p}+\|x\|^{p-2})\textup{d}s.
		\end{split}
		\end{equation*}
		For $T_{1}\in[0,T_{0}]$, by Cauchy-Schwarz inequality, Burkholder-Davis-Gundy inequality and Young's inequality we deduce that
		\begin{equation*}
		\begin{split}
		&\mathbb{E}\Big(\sup_{0\leq T\leq T_{1}\wedge\tau_{K}}\|x(T)\|^{p}\Big)\\
		\leq&C\|a(0)\|^{p}+C+C\mathbb{E}\Big(\int^{T_{1}\wedge\tau_{K}}_{0}\|x(s)\|^{p}\textup{d}s\Big)+C\mathbb{E}\Big(\int^{T_{1}\wedge\tau_{K}}_{0}\|x(s)\|^{2p}\textup{d}s\Big)^{\frac{1}{2}}\\
		\leq &C\|a(0)\|^{p}+C+C\int^{T_{1}}_{0}\mathbb{E}\Big(\sup_{0\leq s_{1}\leq s\wedge\tau_{K}}\|x(s_{1})\|^{p}\Big)\textup{d}s\frac{1}{2}\mathbb{E}\Big(\sup_{0\leq T\leq T_{1}\wedge\tau_{K}}\|x(T)\|^{p}\Big).
		\end{split}
		\end{equation*}
		Using Gronwall's lemma, we own
		\begin{equation}\label{eq081}
		\mathbb{E}\Big(\sup_{0\leq T\leq T_{0}\wedge\tau_{K}}\|x(T)\|^{p}\Big)\leq C\|a(0)\|^{p}+C.
		\end{equation}
		We claim (\ref{eq081}) still holds if $T_{0}\wedge\tau_{K}$ is replaced with $T_{0}$.\\
		In fact. Let the right side of (\ref{eq081}) be $\widetilde{C}$.
		Then,
		\begin{equation*}
		\begin{split}
		\quad\mathbb{P}\Big(\tau_{K}>T_{0}\Big)&=\mathbb{P}\Big(\sup_{T\leq T_{0} \wedge\tau_{K}}\|x(T)\|<K\Big)\\
		&=1-\mathbb{P}\Big(\sup_{T\leq T_{0}\wedge\tau_{K}}\|x(T)\|\geq K\Big)\\
		&\geq 1-\frac{\widetilde{C}}{K^{p}}.
		\end{split}
		\end{equation*}
		Then, we know that 
		$\sup\limits_{0\leq T\leq T_{0}\wedge\tau_{K}}\|x(t)\|^{p}$ monotonously converges to $\sup\limits_{0\leq T\leq T_{0}}\|x(t)\|^{p}~a.s.$, as $K\rightarrow \infty$.
		Thus, monotone convergence theorem yields
		\begin{equation*}
		\mathbb{E}\Big(\sup_{0\leq T \leq T_{0}} \|x(T)\|^{p}\Big)=\lim_{K\rightarrow \infty} \mathbb{E}\Big(\sup_{0\leq T \leq T_{0}\wedge\tau_{K}}\|x(T)\|^{p}\Big)\leq \widetilde{C}.
		\end{equation*}
		For $1\leq p<2$, H$\textup{\"{o}}$lder inequality yields (\ref{eq301}).
		
		Let $h(T):=x(T)-a(T)+R(T)$. Then, we have
		\begin{equation}
		\begin{split}
		h(T)&=\int^{T}_{0}\mathcal{L}_{c}(h-R)\textup{d}s+2\int^{T}_{0}\mathcal{F}_{c}(x)\textup{d}s-2\int^{T}_{0}\mathcal{F}_{c}(x-h+R)\textup{d}s\\
		&\quad+\int^{T}_{0}\varepsilon^{-1}\bar{G}'_{c}(0)(h-R)\textup{d}\widetilde{W}(s).
		\end{split}
		\end{equation}
		For $p\geq 2$, using It$\hat{\textup{o}}^{,}$s Lemma, we obtain
		\begin{align*}
		\|h(T)\|^{p}&\leq C\int^{T}_{0}\|h\|^{p-2}\langle\mathcal{L}_{c}(h-R),h\rangle\textup{d}s+C\int^{T}_{0}\|h\|^{p-2}\langle\mathcal{F}_{c}(x)-\mathcal{F}_{c}(x-h+R),h\rangle\textup{d}s\\
		&~~+C\int^{T}_{0}\|h\|^{p-2}\langle h,\bar{G}'_{c}(0)(h-R)\textup{d}\tilde{W}\rangle+C\int^{T}_{0}\|h\|^{p-2}\|h-R\|^{2}\textup{d}s.
		\end{align*}
		According to condition (\ref{eq44}), Cauchy-Schwarz inequality and Young's inequality, we obtain
		\begin{align*}
		\|h(T)\|^{p}&\leq C\int^{T}_{0}(\|h\|^{p}+\|R\|^{p}+\|R\|^{2p}+\|R\|^{p}\|x\|^{p})\textup{d}s\\
		&~~+C\int^{T}_{0}\|h\|^{p-2}\langle h,\bar{G}'_{c}(0)(h-R)\textup{d}\tilde{W}\rangle+C\int^{T}_{0}\|h\|^{p-2}\|h-R\|^{2}\textup{d}s.
		\end{align*}
		Furthermore, for any $T^{\star}\in [0,T_{0}]$, applying to Burkholder-Davis-Gundy inequality and Young's inequality, we have
		\begin{align*}
		\mathbb{E}(\sup_{0\leq T\leq T^{\star}\wedge \tau^{\star}}\|h(T)\|^{p})&\leq C\mathbb{E}(\sup_{0\leq T\leq T^{\star}\wedge \tau^{\star}}\int^{T}_{0}\|h\|^{p}+\|R\|^{p}+\|R\|^{2p}+\|R\|^{p}\|x\|^{p}\textup{d}s)\\
		&~~+C\mathbb{E}\big(\int^{T^{\star}\wedge T_{0}}_{0}(\|h\|^{2p}+\|R\|^{2p})\textup{d}s\big)^{\frac{1}{2}}\\
		&\leq \frac{1}{2}\mathbb{E}(\sup_{0\leq T\leq T^{\star}\wedge\tau^{\star}}\|h(T)\|^{p})+
		C\int^{T}_{0}\mathbb{E}(\sup_{0\leq s_{1}\leq s\wedge \tau^{\star}}\|h(s_{1})\|^{p})\textup{d}s\\
		&~~+C\varepsilon^{p-18\kappa p}.
		\end{align*}
		Gronwall's lemma yields
		\begin{equation*}
		\begin{split}
		\mathbb{E}\Big(\sup_{0\leq T\leq\tau^{\star}}\|h(T)\|^{p}\Big)\leq C \varepsilon^{p-18\kappa p}.
		\end{split}
		\end{equation*}
		Then, by triangle inequality, we derive
		\begin{equation*}
		\begin{split}
		\mathbb{E}\Big(\sup_{0\leq T\leq \tau^{\star}}\|a(T)-x(T)\|^{p}\Big)&\leq \mathbb{E}\Big(\sup_{0\leq T\leq \tau^{\star}}\|h(T)\|^{p}\Big)+\mathbb{E}\Big(\sup_{0\leq T\leq \tau^{\star}}\|R(T)\|^{p}\Big)\\
		&\leq C\varepsilon^{p-18\kappa p},
		\end{split}
		\end{equation*}
		and
		\begin{equation*}
		\begin{split}
		\mathbb{E}\Big(\sup_{0\leq T\leq \tau^{\star}}\|a(T)\|^{p}\Big)&\leq \mathbb{E}\Big(\sup_{0\leq T\leq \tau^{\star}}\|a(T)-x(T)\|^{p}\Big)+\mathbb{E}\Big(\sup_{0\leq T\leq \tau^{\star}}\|x(T)\|^{p}\Big)\\
		&\leq C\varepsilon^{-\frac{\kappa p}{3}}.
		\end{split}
		\end{equation*}
		
		We complete the proof.
	\end{proof}
	
	Although most of  previous estimates hold on random interval $[0,\tau^{\star}]$, we will show the main theorem holds on fixed interval $[0,T_{0}/\varepsilon^{2}]$.
	\begin{lemma}\label{lem15}
		Let \ref{assu1}, \ref{assu2}, \ref{assu3}, \ref{assu5}, \ref{assu6} hold. For $p>1$ and $\|u(0)\|_{\alpha}\leq -\frac{\kappa}{3}$,
		there exists a positive constant $C$ such that
		\begin{equation*}
		\mathbb{E}\Big(\sup_{0\leq T\leq \tau^{\star}}\|\mathcal{R}(T)\|^{p}_{\alpha}\Big)\leq C\varepsilon^{2p-18\kappa p},
		\end{equation*}
		where $\mathcal{R}(T)=u(\varepsilon^{-2}T)-\varepsilon x(T)$.
	\end{lemma}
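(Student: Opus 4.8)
The plan is to decompose the error $\mathcal{R}(T) = u(\varepsilon^{-2}T) - \varepsilon x(T)$ using the splitting $u(\varepsilon^{-2}T) = \varepsilon a(T) + \varepsilon^2 b(T)$ established earlier, so that $\mathcal{R}(T) = \varepsilon(a(T) - x(T)) + \varepsilon^2 b(T)$. By the triangle inequality,
\begin{equation*}
\mathbb{E}\Big(\sup_{0\leq T\leq \tau^{\star}}\|\mathcal{R}(T)\|^{p}_{\alpha}\Big) \leq C\varepsilon^{p}\,\mathbb{E}\Big(\sup_{0\leq T\leq \tau^{\star}}\|a(T)-x(T)\|^{p}_{\alpha}\Big) + C\varepsilon^{2p}\,\mathbb{E}\Big(\sup_{0\leq T\leq \tau^{\star}}\|b(T)\|^{p}_{\alpha}\Big).
\end{equation*}
Since $a, x \in \mathcal{N}$ and all $\mathcal{H}^\alpha$-norms are equivalent on the finite-dimensional space $\mathcal{N}$, the first term is controlled by Lemma~\ref{boundb}: under the hypothesis $\|u(0)\|_\alpha \leq \varepsilon^{-\kappa/3}$ (which gives $\|a(0)\| \leq \varepsilon^{-\kappa/3}$ after the rescaling), we have $\mathbb{E}(\sup\|a-x\|^p) \leq C\varepsilon^{p-18\kappa p}$, so this contributes $C\varepsilon^{2p-18\kappa p}$.

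For the second term, I would invoke Lemma~\ref{lem13}, which gives $\mathbb{E}(\sup_{0\leq T\leq\tau^\star}\|b(T)\|^p) \leq C\|b(0)\|^p + C\varepsilon^{-2\kappa p}$. The point is that $b(0) = \varepsilon^{-2} P_s u(0)$ only after the scaling, but since $\|u(0)\|_\alpha \leq \varepsilon^{-\kappa/3}$ is small relative to negative powers of $\varepsilon$, one has $\|b(0)\|_\alpha \leq \varepsilon^{-2-\kappa/3}$, hence $\varepsilon^{2p}\|b(0)\|^p \leq C\varepsilon^{-\kappa p/3}$; more carefully, since $\kappa \in (0, \tfrac{2}{19})$, one checks $\varepsilon^{2p}\cdot\varepsilon^{-2\kappa p} = \varepsilon^{2p-2\kappa p}$ dominates the contribution from $b$, and $2p - 2\kappa p \geq 2p - 18\kappa p$ since $\kappa > 0$. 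Thus the $b$-term is also $O(\varepsilon^{2p-18\kappa p})$ (in fact better), and combining the two gives the claimed bound. The case $1 \leq p < 2$ follows from the case $p \geq 2$ by Hölder's (Jensen's) inequality, exactly as in the proof of Lemma~\ref{boundb}.

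The main subtlety — and the step I would be most careful about — is bookkeeping the initial-data scaling and making sure the exponent $2p - 18\kappa p$ is genuinely the worst one: one must confirm that every contributing term ($\varepsilon^p \cdot \varepsilon^{-18\kappa p}$ from $a - x$, $\varepsilon^{2p}\cdot\varepsilon^{-2\kappa p}$ from the stochastic convolution part of $b$, and $\varepsilon^{2p}\|b(0)\|^p$ from the initial condition of $b$) is $\geq 2p - 18\kappa p$ in exponent, using only $\kappa \in (0, \tfrac{2}{19})$ and $\|u(0)\|_\alpha \leq \varepsilon^{-\kappa/3}$. No new estimate is required beyond Lemmas~\ref{lem13} and~\ref{boundb}; the argument is essentially assembling already-proven pieces, with the norm-equivalence on $\mathcal{N}$ doing the work of passing between $\|\cdot\|$ and $\|\cdot\|_\alpha$ for the slow component.
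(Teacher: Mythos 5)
Your decomposition $\mathcal{R}(T)=\varepsilon(a(T)-x(T))+\varepsilon^{2}b(T)$ followed by the triangle inequality and Lemmas \ref{lem13} and \ref{boundb} is exactly the paper's proof, which consists of precisely this one-line observation. The initial-data bookkeeping for $b(0)$ that you rightly flag as the only delicate point is simply glossed over in the paper; the cleanest way to close it is to note that the stopping time $\tau^{\star}$ already forces $\|b(0)\|_{\alpha}\leq\varepsilon^{-3\kappa}$, so that $\varepsilon^{2p}\|b(0)\|^{p}\leq\varepsilon^{2p-3\kappa p}\leq C\varepsilon^{2p-18\kappa p}$, rather than going through $b(0)=\varepsilon^{-2}P_{s}u(0)$, which as you half-observe does not by itself yield the stated exponent.
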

	\begin{proof}
		Note that $
		\mathcal{R}(T)=u(\varepsilon^{-2}T)-\varepsilon x(T)=\varepsilon a(T)+\varepsilon^{2}b(T)-\varepsilon x(T)$.
		Then, we can prove this Lemma by Lemma \ref{lem13} and Lemma \ref{boundb}, .
		
		We complete the proof.
	\end{proof}
	\begin{definition}
		For $\kappa>0$ , define $\Omega^{\star}\subset\Omega$ of all $\omega\subset\Omega$ satisfy that all the following estimations
		\begin{equation*}
		\sup_{0\leq T \leq \tau^{\star}}\|a(T)\|< \varepsilon^{-\frac{\kappa}{2}},~~
		\sup_{0\leq T \leq \tau^{\star}}\|b(T)\|_{\alpha}<\varepsilon^{-\frac{5\kappa}{2}},~~
		\sup_{0\leq T \leq \tau^{\star}}\|\mathcal{R}(T)\|_{\alpha}< \varepsilon^{2-19\kappa}.
		\end{equation*}
		
	\end{definition}
	\begin{lemma}\label{lem16}
		Under Assumption \ref{assu1}, \ref{assu2}, \ref{assu3}, \ref{assu4}-\ref{assu6}, for $p>1$, there exists a positive constant $C$ such that
		\begin{equation*}
		\mathbb{P}(\Omega^{\star})\geq 1-C\varepsilon^{p}.
		\end{equation*}
	\end{lemma}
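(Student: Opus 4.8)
The plan is to show that $\Omega^{\star}$, the event on which all three a priori bounds hold with slightly better exponents than those defining $\tau^{\star}$, has probability at least $1-C\varepsilon^{p}$ for arbitrary $p$. The tool is Markov's inequality applied to each of the three high-moment estimates already established: Lemma \ref{lem13} gives $\mathbb{E}(\sup_{0\le T\le\tau^{\star}}\|b(T)\|^{q})\le C\|b(0)\|^{q}+C\varepsilon^{-2\kappa q}$, Lemma \ref{boundb} gives $\mathbb{E}(\sup_{0\le T\le\tau^{\star}}\|a(T)\|^{q})\le C\varepsilon^{-\kappa q/3}$ under the initial condition $\|a(0)\|\le\varepsilon^{-\kappa/3}$, and Lemma \ref{lem15} gives $\mathbb{E}(\sup_{0\le T\le\tau^{\star}}\|\mathcal{R}(T)\|_{\alpha}^{q})\le C\varepsilon^{2q-18\kappa q}$. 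Each of these holds for every $q>1$, so the free moment exponent $q$ is what we will tune to beat the target power $\varepsilon^{p}$.

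First I would treat the $a$-bound. By Markov's inequality, for any $q$,
\begin{equation*}
\mathbb{P}\Big(\sup_{0\le T\le\tau^{\star}}\|a(T)\|\ge\varepsilon^{-\kappa/2}\Big)\le\varepsilon^{\kappa q/2}\,\mathbb{E}\Big(\sup_{0\le T\le\tau^{\star}}\|a(T)\|^{q}\Big)\le C\varepsilon^{\kappa q/2-\kappa q/3}=C\varepsilon^{\kappa q/6}.
\end{equation*}
Since the exponent $\kappa q/6\to\infty$ as $q\to\infty$, choosing $q$ large enough that $\kappa q/6\ge p$ bounds this probability by $C\varepsilon^{p}$. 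I would run the identical argument for the other two events: for $b$, comparing the threshold $\varepsilon^{-5\kappa/2}$ against the bound $\varepsilon^{-2\kappa}$ (using $\|b(0)\|\le\varepsilon^{-3\kappa}$ or whatever initial smallness is assumed, which still leaves a net negative-in-modulus gap since $5\kappa/2>2\kappa$ and, after accounting for $b(0)$, one needs the gap $\kappa/2$ per unit moment — here I would double-check that $\|b(0)\|$ is controlled; if $b(0)=P_s u(0)/\varepsilon^2$ is $\mathcal{O}(\varepsilon^{-3\kappa})$ or smaller, the comparison still yields a positive power of $\varepsilon$), giving $\mathbb{P}(\sup\|b(T)\|_{\alpha}\ge\varepsilon^{-5\kappa/2})\le C\varepsilon^{\kappa q/2}$; and for $\mathcal{R}$, comparing $\varepsilon^{2-19\kappa}$ against $\varepsilon^{2-18\kappa}$ gives $\mathbb{P}(\sup\|\mathcal{R}(T)\|_{\alpha}\ge\varepsilon^{2-19\kappa})\le C\varepsilon^{\kappa q}$. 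In each case the surplus exponent is a positive multiple of $q$, so a single choice of $q=q(p,\kappa)$ makes all three tail probabilities at most $C\varepsilon^{p}$. A union bound over the three complementary events then gives $\mathbb{P}(\Omega^{\star})\ge 1-3C\varepsilon^{p}\ge 1-C'\varepsilon^{p}$.

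The only genuine subtlety — and the step I would scrutinize most carefully — is the bookkeeping of $\kappa$-exponents, in particular verifying that each threshold in the definition of $\Omega^{\star}$ is strictly weaker (larger in absolute exponent, for the blow-up thresholds) than the corresponding moment bound, so that the exponent gap is a \emph{positive} multiple of the free moment $q$ rather than merely nonnegative or negative. For the $b$-component this requires the initial data to satisfy a smallness hypothesis consistent with $\|b(0)\|_\alpha\le\varepsilon^{-5\kappa/2+\delta}$ for some $\delta>0$; I would state this hypothesis explicitly (it is the natural companion of $\|a(0)\|\le\varepsilon^{-\kappa/3}$ already used in Lemma \ref{boundb} and Lemma \ref{lem15}) and note that the constraint $\kappa\in(0,\tfrac{2}{19})$ guarantees $2-19\kappa>0$ so that the $\mathcal{R}$-threshold is a genuinely small quantity. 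Once the exponent arithmetic is confirmed, the proof is a routine three-fold application of Chebyshev/Markov plus a union bound, and I would close with the remark that the power of $\varepsilon$ is arbitrarily large precisely because all the underlying moment estimates hold for all $p>1$.
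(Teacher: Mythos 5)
Your argument is exactly the paper's proof: a union bound over the three defining events of $\Omega^{\star}$, Chebyshev/Markov applied to each using the moment estimates of Lemma \ref{lem13}, Lemma \ref{boundb} and Lemma \ref{lem15}, and a choice of the free moment exponent $q$ large enough that each surplus exponent ($\kappa q/6$, $\kappa q/2$, $\kappa q$) exceeds $p$. Your extra remark that the $b$-term requires an implicit smallness hypothesis on $\|b(0)\|_{\alpha}$ is a fair observation about a detail the paper leaves tacit, but it does not change the argument.
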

	\begin{proof}
		For fixed $p>1$, using Lemma \ref{lem13}, Lemma \ref{boundb}, Lemma \ref{lem15} and Chebyshev inequality, we have
		\begin{equation*}
		\begin{split}
		\mathbb{P}(\Omega^{\star})&\geq 1-\mathbb{P}(\sup_{0\leq T \leq \tau^{\star}}\|a(T)\|\geq\varepsilon^{-\frac{\kappa}{2}})-\mathbb{P}(\sup_{0\leq T \leq \tau^{\star}}\|b(T)\|_{\alpha}\geq\varepsilon^{-\frac{5\kappa}{2}})\\
		&\quad-\mathbb{P}(\sup_{0\leq T \leq \tau^{\star}}\|\mathcal{R}(T)\|_{\alpha}\geq\varepsilon^{2-19\kappa})\\
		&\geq 1-\varepsilon^{\frac{\kappa q}{2}}\mathbb{E}(\sup_{0\leq T \leq \tau^{\star}}\|a(T)\|^{q})-\varepsilon^{\frac{5\kappa q}{2}}\mathbb{E}(\sup_{0\leq T \leq \tau^{\star}}\|b(T)\|^{q}_{\alpha})\\
		&\quad-\varepsilon^{19\kappa q-2q}\mathbb{E}(\sup_{0\leq T \leq \tau^{\star}}\|\mathcal{R}(T)\|^{q}_{\alpha})\\
		&\geq 1-C\varepsilon^{p},
		\end{split}
		\end{equation*}
		where $q$ is large enough.
		
		We complete the proof.
	\end{proof}
	\begin{Theorem}\label{the1}
		Let Assumption \ref{assu1}, \ref{assu2}, \ref{assu3}, \ref{assu4}-\ref{assu6} hold. Let $u(t)$ be the local mild solution of (\ref{eq001}) with $\|u(0)\|_{\alpha}\leq \varepsilon^{1-\frac{\kappa}{3}}$.
		Then, for any $p>1$, there exists a positive constant $C$ such that
		\begin{equation*}
		\mathbb{P}\Big(\sup_{0\leq t\leq\varepsilon^{-2}T_{0}}\|u(t)-\varepsilon x(\varepsilon^{2}t)\|_{\alpha}>\varepsilon^{2-19\kappa}\Big)\leq \varepsilon^{p}.
		\end{equation*}
	\end{Theorem}
	\begin{proof}
		Note that
		\begin{equation*}
		\Omega^{\star}\subseteq\Big\{\omega\Big|\sup_{0\leq T \leq \tau^{\star}}\|a(T)\|< \varepsilon^{-\kappa},
		\sup_{0\leq T \leq \tau^{\star}}\|b(T)\|_{\alpha}<\varepsilon^{-3\kappa}\Big\}\subseteq\{\omega\Big|\tau^{\star}=T_{0}\}\subseteq\Omega.
		\end{equation*}
		Then,
		\begin{equation*}
		\sup_{0\leq T \leq T_{0}}\|\mathcal{R}(T)\|_{\alpha}=\sup_{0\leq T \leq \tau^{\star}}\|\mathcal{R}(T)\|_{\alpha}< \varepsilon^{2-19\kappa}, \omega\in \Omega^{\star}.
		\end{equation*}
		Lemma \ref{lem16} implies that
		\begin{equation*}
		\mathbb{P}(\sup_{0\leq T \leq T_{0}}\|\mathcal{R}(T)\|_{\alpha}\geq\varepsilon^{2-19\kappa})\leq 1-\mathbb{P}(\Omega^{\star})\leq \varepsilon^{p}.
		\end{equation*}
		
		We complete the proof.	
	\end{proof}
	\begin{Remark}
		We comment Theorem \ref{the1} covers the case that \ref{eq001} is forced either additive noise or multiplicative one. In particular, if the amplitude equation is only with multiplicative noise, we can analyze the stability of the trivial solution, and further study how multiplicative noise changes the stability. We will give an example to illustrate it in Section 5. 	
	\end{Remark}
	\section{Degenerate noise}
	In this section, we are committed to (\ref{eq001}) with $\sigma_{\varepsilon}=\varepsilon$ and degenerate additive noise (i.e., the noise does not influence the dominant modes directly). Moreover, we suppose that $\mathcal{L}$ commute with $P_{c}$ and $P_{s}$.

	Because the additive noise is of order $\varepsilon$ in this section, the same decomposition as Section 3 does not allow us to control the fast modes. In order to overcome this trouble, we will use another decomposition to consider (\ref{eq001}).
	
	Let $u(t)=\varepsilon \varphi(\varepsilon^{2}t)+\varepsilon \psi(\varepsilon^{2}t)$, where $\varphi\in\mathcal{N}$ and $\psi\in\mathcal{S}$. Introduce slow time scale $T=\varepsilon^{2}t$.
	By projections $P_{c}$ and $P_{s}$, we split $u$ into
	\begin{align}
	\textup{d}\varphi&=[\mathcal{L}_{c}\varphi+2\varepsilon^{-1}B_{c}(\varphi,\psi)+\varepsilon^{-1} B_{c}(\psi)]\textup{d}T+\varepsilon^{-1}\bar{G}_{c}(\varepsilon \varphi+\varepsilon \psi)\textup{d}\widetilde{W},  \label{eqx01} \\
	\textup{d}\psi&=[\varepsilon^{-2}\mathcal{A}_{s}\psi+\mathcal{L}_{s} \psi+\varepsilon^{-1}B_{s}(\psi)+2\varepsilon^{-1}B_{s}(\varphi,\psi)+\varepsilon^{-1}B_{s}(\varphi)]\textup{d}T\label{eqy01}\\
	&~~+[\varepsilon^{-1}\widetilde{G}+\varepsilon^{-1}\bar{G}_{s}(\varepsilon \varphi+\varepsilon \psi)]\textup{d}\widetilde{W}.\notag
	\end{align}
	
	As Section 2, we also hope to replace $\psi(T)$ by $\varphi(T)$ in (\ref{eqx01}) and obtain amplitude solution. Although we can gain a reduced system after replacing $\psi(T)$ by $\varphi(T)$, there exists still O-U process in the diffusion part of it, which causes it is perturbed by fast fluctuation. We will respectively treat this problem in two cases. The first case is the dimension of $\ker{A}$ is more than one. In this case, we can deal with the O-U process and obtain the amplitude equation, but we just can show the law of the solution of the amplitude equation weakly converges to that of $\varphi(T)$ without explicit convergence rate. The other is $\ker{A}$ is one-dimensional space. In this case, we can not only obtain the amplitude equation, but also present the precise error between the approximation solution and the original one.
	
	The section is divided into three subsections. In Subsection 4.1, we will estimate $\psi(T)$ and obtain a reduced system with O-U process. In Subsection 4.2, our aim is to study the amplitude equation in case that $\ker{A}$ is multi-dimensional space. In Subsection 4.3, we will show the amplitude equation  for one-dimensional kernel space, and give rigorous error
	analysis.
	\subsection{Reduced system}\label{reduced}
	
	In order to guarantee that $\varphi(T)$ and $\psi(T)$ do not get out of a bounded domain on some interval , we introduce a stopping time.
	\begin{definition}\label{defs2}
		For an $\mathcal{N}\times\mathcal{S}$-valued stochastic process $(\varphi,\psi)$ given by $(\ref{eqx01})$ and $(\ref{eqy01})$, we define, for $T_{0}>0$ and $\kappa\in(0,\frac{1}{13})$, the stopping time
		$\tau^{\star}_{1}$ as
		\begin{equation*}
		\tau^{\star}_{1}:=T_{0}\wedge\inf\{T>0\big|\|\varphi(T)\|_{\alpha}>\varepsilon^{-\kappa}\quad or \quad\|\psi(T)\|_{\alpha}>\varepsilon^{-\kappa}\}.
		\end{equation*}
	\end{definition}
	For simplicity of representation, we introduce a notation to express higher order term
	in the sense of probability.
	\begin{definition}\label{defso2}
		Let  $\{X_{\varepsilon}(T)\}_{T\geq0}$ be
		a family of real-valued processes, we say $X_{\varepsilon}=\bar{\mathcal{O}}(f_{\varepsilon})$ with respect to stopping time $\tau^{\star}_{1}$, if for every $p>1$ there exists a positive constant $C$ such that
		\begin{equation*}
		\mathbb{E}\Big(\sup_{0\leq T\leq\tau^{\star}_{1}}|X_{\varepsilon}(T)|^{p}\Big)\leq Cf^{p}_{\varepsilon}.
		\end{equation*}
	\end{definition}
	
	Letting $V_{1}(T)=\mathcal{L}_{s} \psi+\varepsilon^{-1}B_{s}(\psi)+2\varepsilon^{-1}B_{s}(\varphi,\psi)+\varepsilon^{-1}B_{s}(\varphi)$, we present the mild solution of $\psi(T)$:
	\begin{align*}
	\psi(T)&=e^{\varepsilon^{-2}\mathcal{A}_{s}T}\psi(0)+\int^{T}_{0}e^{\varepsilon^{-2}\mathcal{A}_{s}(T-s)}V_{1}(s)\textup{d}s+\varepsilon^{-1}\int^{T}_{0}e^{\varepsilon^{-2}\mathcal{A}_{s}(T-s)}\widetilde{G}_{s}\textup{d}\widetilde{W}\\
	&~~~~+\varepsilon^{-1}\int^{T}_{0}e^{\varepsilon^{-2}\mathcal{A}_{s}(T-s)}\bar{G}_{s}(\varepsilon \varphi+\varepsilon \psi)\textup{d}\widetilde{W}\\
	&:=Q(T)+J(T)+\mathcal{Z}(T)+K(T).
	\end{align*}
	
	\begin{lemma}\label{l02} Under Assumption \ref{assu1}, \ref{assu2}, \ref{assu3}, \ref{assu5}, \ref{assu6}, for $p>1$, there exists a constant positive $C$ such that
		\begin{align*}
		\mathbb{E}\Big(\sup_{0\leq T\leq\tau^{\star}_{1}}\|J(T)\|_{\alpha}^{p}\Big)\leq C\varepsilon^{p-2\kappa p}.
		\end{align*}
	\end{lemma}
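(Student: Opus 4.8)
The plan is to bound $J(T)=\int_0^T e^{\varepsilon^{-2}\mathcal{A}_s(T-s)}V_1(s)\,\textup{d}s$ in exactly the same style as Lemma \ref{lem001}, the only difference being that here the forcing $V_1$ carries factors of $\varepsilon^{-1}$ (rather than $\varepsilon^{-2}$) in front of the bilinear and linear pieces. First I would apply the triangle inequality to pull the norm inside the time integral, and then invoke Lemma \ref{l51} with $\beta$ as in Assumptions \ref{assu2}--\ref{assu3} and some fixed $\rho\in(\lambda_n,\lambda_{n+1}]$, to get
\begin{equation*}
\Big\|\int_0^T e^{\varepsilon^{-2}\mathcal{A}_s(T-s)}V_1(s)\,\textup{d}s\Big\|_\alpha
\leq M\int_0^T (T-s)^{-\beta/m}e^{-\rho\varepsilon^{-2}(T-s)}\|V_1(s)\|_{\alpha-\beta}\,\textup{d}s.
\end{equation*}
The scalar kernel integral $\int_0^T (T-s)^{-\beta/m}e^{-\rho\varepsilon^{-2}(T-s)}\,\textup{d}s$ is, after the substitution $r=\varepsilon^{-2}(T-s)$, bounded by $C\varepsilon^{2(1-\beta/m)}\Gamma(1-\beta/m)\le C\varepsilon^{2-2\beta/m}$, and in particular by $C\varepsilon^{2}$ up to the harmless $\varepsilon^{-2\beta/m}$ loss which one absorbs; the key point (as in Lemma \ref{lem001}) is that one factor $\varepsilon^{2}$ is gained from the fast semigroup. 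Since $\beta/m<1$ by Assumption \ref{assu2}, the gain is a genuine positive power of $\varepsilon$.

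Next I would estimate $\sup_{0\le T\le\tau^\star_1}\|V_1(T)\|_{\alpha-\beta}$. By Assumptions \ref{assu2} and \ref{assu3}, $\mathcal{L}_s:\mathcal{H}^\alpha\to\mathcal{H}^{\alpha-\beta}$ and $B_s:\mathcal{H}^\alpha\times\mathcal{H}^\alpha\to\mathcal{H}^{\alpha-\beta}$ are bounded, so on $[0,\tau^\star_1]$, where $\|\varphi\|_\alpha\le\varepsilon^{-\kappa}$ and $\|\psi\|_\alpha\le\varepsilon^{-\kappa}$ by Definition \ref{defs2},
\begin{equation*}
\|V_1(T)\|_{\alpha-\beta}\leq C\|\psi\|_\alpha+C\varepsilon^{-1}\big(\|\psi\|_\alpha^2+\|\varphi\|_\alpha\|\psi\|_\alpha+\|\varphi\|_\alpha^2\big)\leq C\varepsilon^{-\kappa}+C\varepsilon^{-1-2\kappa}\leq C\varepsilon^{-1-2\kappa}.
\end{equation*}
Raising to the $p$-th power and combining with the semigroup gain, the deterministic pointwise bound reads $\|J(T)\|_\alpha^p\le C\varepsilon^{2p}\cdot\varepsilon^{-(1+2\kappa)p}=C\varepsilon^{p-2\kappa p}$ on $[0,\tau^\star_1]$ (the small extra factor $\varepsilon^{-2\beta p/m}$ can be swept into the constant, or one simply takes $\rho$ slightly below $\lambda_{n+1}$ and keeps track of $\beta<m$; in any case the stated exponent $p-2\kappa p$ is attained, with room to spare for $\kappa<\tfrac1{13}$). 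Taking expectation of the supremum over $0\le T\le\tau^\star_1$ then gives the claim for $p>1$; for $1\le p\le 2$ one first does $p\ge 2$ and then uses H\"older's inequality, exactly as in the proof of Lemma \ref{boundb}.

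The only mild obstacle is bookkeeping: one must make sure the stopping-time bounds $\varepsilon^{-\kappa}$ from Definition \ref{defs2} are used \emph{inside} the integral (so the bound on $\|V_1\|_{\alpha-\beta}$ is uniform on $[0,\tau^\star_1]$ before the supremum is taken), and that the $\varepsilon^{-1}$ prefactors of the quadratic terms in $V_1$ combine with the two-$\kappa$ loss from $\|\varphi\|_\alpha\|\psi\|_\alpha$ to give exactly $\varepsilon^{-1-2\kappa}$ and no worse. There is no stochastic integral in $J(T)$, so no Burkholder--Davis--Gundy estimate is needed here; the genuinely stochastic terms $\mathcal{Z}(T)$ and $K(T)$ are handled separately (the former by Lemma \ref{l03}, the latter by the factorization argument of Lemma \ref{lem002}). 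Hence the proof is routine given Lemma \ref{l51} and Assumptions \ref{assu2}--\ref{assu3}.
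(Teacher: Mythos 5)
Your route is the paper's route: the paper's proof of this lemma is literally a one-line reference to Lemma \ref{lem001}, namely pull the norm inside the time integral, apply Lemma \ref{l51} to the fast semigroup, gain a factor $\varepsilon^{2}$ from the kernel integral, and pay $\sup_{[0,\tau^{\star}_{1}]}\|V_{1}\|_{\alpha-\beta}\leq C\varepsilon^{-1-2\kappa}$ using the cutoffs of Definition \ref{defs2}; your estimate of $V_{1}$ and the final exponent $p-2\kappa p$ are exactly right, and you are also right that no Burkholder--Davis--Gundy argument is needed here. One step does need repair, though: the factor $\varepsilon^{-2\beta/m}$ that you propose to ``sweep into the constant'' is divergent and cannot be absorbed --- in the Burgers example $\beta/m=5/8$, so your kernel bound $C\varepsilon^{2-2\beta/m}=C\varepsilon^{3/4}$ would only give $\|J\|_{\alpha}\leq C\varepsilon^{-1/4-2\kappa}$ and the lemma would fail. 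The point is that this factor never actually appears: Lemma \ref{l51} must be applied at the time argument $t=\varepsilon^{-2}(T-s)$, so the power-law prefactor is $(\varepsilon^{-2}(T-s))^{-\beta/m}=\varepsilon^{2\beta/m}(T-s)^{-\beta/m}$, and the $\varepsilon^{2\beta/m}$ exactly cancels the $\varepsilon^{-2\beta/m}$ produced by your substitution $r=\varepsilon^{-2}(T-s)$, leaving the clean gain $C\varepsilon^{2}$ independent of $\beta$. With that bookkeeping corrected, the pathwise bound $\|J(T)\|_{\alpha}\leq C\varepsilon^{1-2\kappa}$ on $[0,\tau^{\star}_{1}]$ holds for every realization, so raising to any power $p>1$ and taking expectations finishes the proof (the H\"older step for small $p$ is not even needed, since no It\^o formula is involved).
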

	\begin{proof}
		This proof is similar to that of Lemma \ref{lem001}.
		
		We complete the proof.
	\end{proof}

	\begin{lemma}\label{l30}
		Under Assumption \ref{assu1}, \ref{assu2}, \ref{assu3}, \ref{assu5}, \ref{assu6}, for $p>1$, there exists a positive constant $C$ such that
		\begin{equation}
		\mathbb{E}\Big(\sup_{0\leq T\leq\tau^{\star}_{1}}\|K(T)\|_{\alpha}^{p}\Big)\leq C\varepsilon^{p-2\kappa p}.
		\end{equation}
	\end{lemma}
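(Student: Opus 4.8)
The plan is to bound the stochastic convolution $K(T)=\varepsilon^{-1}\int_0^T e^{\varepsilon^{-2}\mathcal{A}_s(T-s)}\bar G_s(\varepsilon\varphi+\varepsilon\psi)\,\textup{d}\widetilde W$ exactly as in the proof of Lemma \ref{lem002}, i.e.\ via the factorization method, the only differences being the scaling $\sigma_\varepsilon=\varepsilon$ in this section (which replaces the prefactors $\varepsilon a+\varepsilon^2 b$ by $\varepsilon\varphi+\varepsilon\psi$) and the use of the stopping time $\tau^\star_1$ from Definition \ref{defs2} in place of $\tau^\star$. First I would treat the case $p>2$. Fix $\gamma\in(\tfrac1p,\tfrac12)$ and introduce the auxiliary process
\begin{equation*}
\mathcal{D}_1(T)=\int_0^T (T-s)^{-\gamma}e^{\varepsilon^{-2}\mathcal{A}_s(T-s)}\bar G_s(\varepsilon\varphi(s)+\varepsilon\psi(s))\,\textup{d}\widetilde W,
\end{equation*}
so that by the stochastic Fubini theorem $\int_0^T e^{\varepsilon^{-2}\mathcal{A}_s(T-s)}\bar G_s(\varepsilon\varphi+\varepsilon\psi)\,\textup{d}\widetilde W = C_\gamma\int_0^T (T-s)^{\gamma-1}e^{\varepsilon^{-2}\mathcal{A}_s(T-s)}\mathcal{D}_1(s)\,\textup{d}s$. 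Then I would apply Lemma \ref{l51} and H\"older's inequality in the time integral to get a deterministic factor $C\varepsilon^{2\gamma p-2}$ times $\int_0^T\|\mathcal D_1(s)\|_\alpha^p\,\textup{d}s$, and separately estimate $\mathbb{E}\|\mathcal D_1(T)\|_\alpha^p$ by the Burkholder--Davis--Gundy inequality together with the Hilbert--Schmidt bound $\|\bar G(u)\|_{\mathscr L_2(U,\mathcal H^\alpha)}\le l_r\|u\|_\alpha$ from Assumption \ref{assu6} and Lemma \ref{l51}.

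The key bookkeeping is in the powers of $\varepsilon$. On $[0,\tau^\star_1]$ one has $\|\varphi\|_\alpha\le\varepsilon^{-\kappa}$ and $\|\psi\|_\alpha\le\varepsilon^{-\kappa}$, hence $\|\varepsilon\varphi+\varepsilon\psi\|_\alpha\le C\varepsilon^{1-\kappa}$ (note the gain over Section 3: here the prefactor is $\varepsilon$ rather than $\varepsilon^2$, but the fast-mode bound is also only $\varepsilon^{-\kappa}$ rather than $\varepsilon^{-3\kappa}$, so the net contribution of the multiplicative prefactor is $\varepsilon^{1-\kappa}$). The BDG estimate then gives, after pulling out the semigroup decay which contributes $\varepsilon^{p-2\gamma p}$ from $\int_0^T(T-s)^{-2\gamma}e^{-\rho(T-s)\varepsilon^{-2}}\,\textup{d}s\le C\varepsilon^{2-4\gamma}$ raised to the power $p/2$, a bound of the form $\mathbb E\|\mathcal D_1(T)\|_\alpha^p\le C\varepsilon^{p-2\gamma p}\cdot\varepsilon^{(1-\kappa)p}=C\varepsilon^{2p-2\gamma p-\kappa p}$. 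Multiplying by the prefactor $\varepsilon^{2\gamma p-2}$ from the factorization step and by the remaining $\varepsilon^{-p}$ coming from the $\varepsilon^{-1}$ in front of $K$, one arrives at $\mathbb E\big(\sup_{0\le T\le\tau^\star_1}\|K(T)\|_\alpha^p\big)\le C\varepsilon^{p-2\kappa p}$ (taking $\gamma$ close to $1/2$ if one wants to absorb the $\gamma$-dependence, or simply noting the stated exponent is what the scaling yields before optimizing). For $1\le p\le 2$ the result follows from H\"older's inequality applied to the $p>2$ estimate, since $\tau^\star_1\le T_0$.

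I do not expect a genuine obstacle here: the argument is a direct transcription of Lemma \ref{lem002}'s proof with the Section 4 scaling. The one point that needs care is the exponent accounting — making sure the $\varepsilon^{-1}$ prefactor, the $\varepsilon^{1-\kappa}$ from $\|\varepsilon\varphi+\varepsilon\psi\|_\alpha$, and the $\varepsilon$-powers produced by the two applications of Lemma \ref{l51} (once inside $\mathcal D_1$ via BDG, once in the factorization time integral) combine to exactly $\varepsilon^{p-2\kappa p}$ — and that the cut-off indicator $\mathbb I_{[0,\tau^\star_1]}(s)$ is correctly inserted before taking expectations so that the a priori bounds on $\varphi$ and $\psi$ are legitimately available inside the stochastic integral. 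This is routine, so the proof can simply refer back to the proof of Lemma \ref{lem002}.
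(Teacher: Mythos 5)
Your proposal matches the paper, whose entire proof of this lemma is the one-line remark that it is similar to Lemma \ref{lem002}; you carry out exactly that transcription via the factorization method with the Section~4 scaling and stopping time $\tau^{\star}_{1}$. The exponent bookkeeping is handled at the same level of care as the paper's own proof of Lemma \ref{lem002} (the raw factorization bound is $\varepsilon^{p-\kappa p-2}$, upgraded to $\varepsilon^{p-2\kappa p}$ by taking $p$ large and then applying H\"older), so the argument is correct and essentially identical to the paper's.
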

	\begin{proof}
		This proof is similar to that of Lemma \ref{lem002}.
		
		We complete the proof.
	\end{proof}
	
	Combining Lemma \ref{l03}, \ref{l02}, \ref{l30}, we can estimate $\psi(T)$ by triangle inequality.
	\begin{lemma}\label{l07}
		Under Assumption \ref{assu1}, \ref{assu2}, \ref{assu3}, \ref{assu5}, \ref{assu6}, for $p>1$ , there exists positive constant $C$ such that
		\begin{equation*}
		\mathbb{E}\Big(\sup_{0\leq T\leq \tau^{\star}_{1}}\|\psi(T)\|^{p}\Big)\leq C\|\psi(0)\|^{p}+C\varepsilon^{\frac{-\kappa p}{3}}.
		\end{equation*}
	\end{lemma}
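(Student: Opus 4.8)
\textbf{Proof proposal for Lemma \ref{l07}.}
The plan is to estimate $\psi(T) = Q(T) + J(T) + \mathcal{Z}(T) + K(T)$ term by term and then combine the four bounds by the triangle inequality (applied inside the $\|\cdot\|^{p}$ after a standard $(a+b+c+d)^{p} \leq C(a^{p}+b^{p}+c^{p}+d^{p})$ estimate). First I would handle the deterministic semigroup term $Q(T) = e^{\varepsilon^{-2}\mathcal{A}_{s}T}\psi(0)$: since $\mathcal{A}_{s}$ is negative definite on $\mathcal{S}$ with spectral gap $\lambda_{n+1}>0$, Lemma \ref{l51} (with $\beta=0$) gives $\|Q(T)\|_{\alpha}\leq M e^{-\lambda_{n+1}\varepsilon^{-2}T}\|\psi(0)\|_{\alpha}\leq M\|\psi(0)\|_{\alpha}$, so its $p$-th moment contributes $C\|\psi(0)\|^{p}$. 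For the remaining three terms I would simply invoke the lemmas already in place: Lemma \ref{l02} bounds the $p$-th moment of $\sup\|J(T)\|_{\alpha}$ by $C\varepsilon^{p-2\kappa p}$, Lemma \ref{l30} bounds that of $\sup\|K(T)\|_{\alpha}$ by $C\varepsilon^{p-2\kappa p}$, and Lemma \ref{l03} (estimate (\ref{eq20})) bounds that of $\sup\|\mathcal{Z}(T)\|_{\alpha}$ by $C\varepsilon^{-\kappa/3}$.

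Next I would observe that among the stochastic contributions the Ornstein--Uhlenbeck-type term $\mathcal{Z}(T)$ is the dominant one: $\varepsilon^{p-2\kappa p}$ and $\varepsilon^{-\kappa/3}$ compare as $\varepsilon^{p-2\kappa p}\leq C$ for $\kappa$ small (indeed $\kappa<1/13$ as in Definition \ref{defs2}) while $\varepsilon^{-\kappa/3}\to\infty$, so the slowest-decaying bound is $C\varepsilon^{-\kappa p/3}$ coming from $\mathcal{Z}$. Hence, collecting the four estimates,
\begin{equation*}
\mathbb{E}\Big(\sup_{0\leq T\leq\tau^{\star}_{1}}\|\psi(T)\|^{p}\Big)\leq C\|\psi(0)\|^{p}+C\varepsilon^{p-2\kappa p}+C\varepsilon^{p-2\kappa p}+C\varepsilon^{-\kappa p/3}\leq C\|\psi(0)\|^{p}+C\varepsilon^{-\kappa p/3},
\end{equation*}
using the equivalence of all $\mathcal{H}^{\alpha}$ norms with the $\mathcal{H}$ norm is not even needed here since everything is already stated in the $\|\cdot\|_{\alpha}$ norm; one only passes from $\|\cdot\|_{\alpha}$ to $\|\cdot\|$ on the left-hand side harmlessly because $\|\cdot\|\leq\|\cdot\|_{\alpha}$ for $\alpha\geq 0$ (and on the finite-dimensional pieces norms are equivalent anyway). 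The case $1\leq p<2$ follows from the case $p\geq 2$ by H\"older's inequality, exactly as in the proof of Lemma \ref{boundb}.

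The only mild subtlety — and the step I would be most careful about — is making sure the powers of $\varepsilon$ genuinely absorb into the stated bound for the admissible range of $\kappa$, i.e. that $\varepsilon^{p-2\kappa p}$ is $\mathcal{O}(1)$ (equivalently $\mathcal{O}(\varepsilon^{-\kappa p/3})$) for all $p>1$; this is where the constraint $\kappa\in(0,\tfrac{1}{13})$ from Definition \ref{defs2} is used, guaranteeing $1-2\kappa>0$. No genuinely new estimate is required: the lemma is a bookkeeping consequence of Lemmas \ref{l51}, \ref{l03}, \ref{l02} and \ref{l30}, and the proof is essentially one line once those are in hand.
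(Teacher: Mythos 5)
Your proposal is correct and is essentially the paper's own argument: the paper proves Lemma \ref{l07} exactly by combining Lemma \ref{l03}, Lemma \ref{l02} and Lemma \ref{l30} via the triangle inequality applied to the decomposition $\psi=Q+J+\mathcal{Z}+K$, with the semigroup term $Q$ handled by Lemma \ref{l51} and the $\mathcal{Z}$-term supplying the dominant $\varepsilon^{-\kappa p/3}$ contribution. Your additional bookkeeping about which power of $\varepsilon$ dominates is accurate and fills in detail the paper leaves implicit.
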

	Now, we begin to replace $\psi(T)$ and $\varphi(T)$.
	\begin{lemma}\label{l1}
		Under Assumption \ref{assu1}, \ref{assu2}, \ref{assu3}, \ref{assu5}, \ref{assu6}, we have
		\begin{align*}
		\int^{T}_{0}B_{c}(\varphi,\psi)\textup{d}s&=-2\varepsilon\int^{T}_{0}B_{c}(B_{c}(\varphi,\mathcal{Z}), \mathcal{A}_{s}^{-1}\mathcal{Z})\textup{d}s-\varepsilon\int^{T}_{0}B_{c}(B_{c}(\mathcal{Z}), \mathcal{A}_{s}^{-1}\mathcal{Z})\textup{d}s \\
		&~~~~-\varepsilon\int^{T}_{0}B_{c}(\varphi,\mathcal{A}_{s}^{-1}B_{s}(\varphi))\textup{d}s-2\varepsilon\int^{T}_{0}B_{c}(\varphi,\mathcal{A}_{s}^{-1}B_{s}(\varphi,\mathcal{Z}))\textup{d}s \\
		&~~~~-\varepsilon\int^{T}_{0}B_{c}(\varphi,\mathcal{A}_{s}^{-1}B_{s}(\mathcal{Z})\textup{d}s-\varepsilon\int^{T}_{0}B_{c}(\varphi,\mathcal{A}_{s}^{-1}\widetilde{G}\textup{d}\widetilde{W} )\\
		&~~~~-\varepsilon\int^{T}_{0}\sum^{\infty}_{j=1}B_{c}(\bar{G}_{c}'(0)(\varphi+\mathcal{Z})f_{j}, \mathcal{A}^{-1}_{s}\widetilde{G}f_{j})\textup{d}s+R_{1}(T),
		\end{align*}
		with $R_{1}(T)=\bar{\mathcal{O}}(\varepsilon^{2-6\kappa})$.
	\end{lemma}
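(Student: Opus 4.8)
\textbf{Proof proposal for Lemma \ref{l1}.}

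The plan is to apply It\^o's formula to the functional $B_{c}(\varphi,\mathcal{A}_{s}^{-1}\psi)$, just as was done to obtain the expansion (\ref{eq011}) in Section~3. Since $B_c$ is bilinear and $\mathcal{A}_s^{-1}$ is a bounded linear operator on $\mathcal{S}$, the It\^o differential of $B_{c}(\varphi(T),\mathcal{A}_{s}^{-1}\psi(T))$ will split into three groups: a term coming from $\textup{d}\varphi$ (which by (\ref{eqx01}) contributes $\mathcal{L}_c\varphi$, $2\varepsilon^{-1}B_c(\varphi,\psi)$, $\varepsilon^{-1}B_c(\psi)$ and the noise $\varepsilon^{-1}\bar G_c(\varepsilon\varphi+\varepsilon\psi)$ paired against $\mathcal{A}_s^{-1}\psi$); a term coming from $\textup{d}\psi$ (which by (\ref{eqy01}) contributes, crucially, $\varepsilon^{-2}\mathcal{A}_s\psi$ paired through $\mathcal{A}_s^{-1}$ to give back $\varepsilon^{-2}B_c(\varphi,\psi)$, plus the lower-order drift pieces of $V_1$ and the noise $\varepsilon^{-1}(\widetilde G+\bar G_s(\varepsilon\varphi+\varepsilon\psi))$); and the It\^o correction term involving the cross-quadratic-variation of $\varphi$ and $\psi$, which produces the $\sum_j B_c(\bar G_c'(0)(\cdot)f_j,\mathcal{A}_s^{-1}\widetilde G f_j)$ contribution together with similar but higher-order brackets. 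Solving the resulting identity for $\int_0^T B_c(\varphi,\psi)\,\textup{d}s$ — the dominant balance being that the $\varepsilon^{-2}\mathcal{A}_s\psi$ piece of $\textup{d}\psi$ cancels against $\mathcal{A}_s^{-1}$ to leave $B_c(\varphi, \mathcal A_s^{-1}B_s(\varphi+\psi))$-type terms — will express $\int_0^T B_c(\varphi,\psi)\,\textup{d}s$ as a boundary term $\varepsilon^{2}[B_c(\varphi,\mathcal{A}_s^{-1}\psi)]_0^T$ (note the extra $\varepsilon^2$ from multiplying through by $\varepsilon^2$ to clear the $\varepsilon^{-2}$) plus a sum of integral terms.

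Next I would substitute the decomposition $\psi = Q(T)+J(T)+\mathcal{Z}(T)+K(T)$ established in Subsection~4.1 into every occurrence of $\psi$ on the right-hand side, retaining only the $\mathcal{Z}(T)$ part wherever it appears at leading order (since $\mathcal{Z}$ is the $\bar{\mathcal{O}}(\varepsilon^{-\kappa/3})$-sized Ornstein--Uhlenbeck contribution and $Q,J,K$ are all $\bar{\mathcal{O}}(\varepsilon^{1-2\kappa})$ or smaller by Lemmas \ref{l02}--\ref{l30} and the decay of the semigroup). This is exactly the substitution that produces the stated right-hand side: the $\psi$ in $B_c(\varphi,\psi)$ inside the drift becomes $\mathcal{Z}$, the $B_s(\psi)$ and $B_s(\varphi,\psi)$ terms become $B_s(\mathcal{Z})$ and $B_s(\varphi,\mathcal{Z})$, and so forth; similarly the noise-bracket term keeps only $\bar G_c'(0)(\varphi+\mathcal{Z})$ after replacing $\bar G$ by its linearization at $0$ via the Taylor expansion (\ref{eq0104}). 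Every discarded difference — the boundary term $\varepsilon^2 B_c(\varphi,\mathcal A_s^{-1}\psi)$ at both endpoints, the genuinely higher-order drift integrals (those carrying an extra power of $\varepsilon$ or an extra factor of $\psi$), the $\mathcal{L}_c\varphi$ and $\mathcal{L}_s\psi$ contributions, and the $Q+J+K$ remainders — is then collected into $R_1(T)$.

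The main work, and the expected obstacle, is verifying the bound $R_1(T)=\bar{\mathcal{O}}(\varepsilon^{2-6\kappa})$ in the sense of Definition \ref{defso2}: each term in $R_1$ must be shown, after taking $\sup_{0\le T\le\tau_1^\star}$ and then $p$-th moments, to be at most $C\varepsilon^{(2-6\kappa)p}$. For the drift integrals this follows from H\"older in time on $[0,\tau_1^\star]\subseteq[0,T_0]$ together with the a priori bounds $\|\varphi(T)\|_\alpha\le\varepsilon^{-\kappa}$, $\|\psi(T)\|_\alpha\le\varepsilon^{-\kappa}$ on $[0,\tau_1^\star]$, the moment bound for $\mathcal{Z}$ from Lemma \ref{l03}, boundedness of $\mathcal A_s^{-1}:\mathcal H^{\alpha-1}\to\mathcal H^\alpha$ and of $B$, and equivalence of all $\mathcal H^\alpha$-norms on the finite-dimensional $\mathcal{N}$; one simply has to check that the worst combination of explicit $\varepsilon$-powers and the $\varepsilon^{-\kappa}$-budgets never exceeds $6\kappa$ below $\varepsilon^2$. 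For the stochastic-integral remainders (the $Q,J,K$ pieces of the noise brackets and the Taylor remainder $\bar G''(\theta)(\varepsilon\varphi+\varepsilon\psi,\varepsilon\varphi+\varepsilon\psi)$) one uses the Burkholder--Davis--Gundy inequality exactly as in the proof of Lemma \ref{lemr}, reducing to the same $\mathcal H^\alpha$-norm bounds. The bookkeeping is lengthy but each individual estimate is routine; the only subtlety is making sure the cancellation of the $\varepsilon^{-2}$ singularity against $\mathcal{A}_s^{-1}$ is carried out correctly before any estimate is attempted, since otherwise the remainder would not be small.
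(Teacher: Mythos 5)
Your proposal follows the paper's own proof essentially verbatim: It\^o's formula applied to $B_{c}(\varphi,\mathcal{A}_{s}^{-1}\psi)$, isolating the $\varepsilon^{-2}B_{c}(\varphi,\psi)$ term and multiplying through by $\varepsilon^{2}$, substituting the mild-solution decomposition $\psi=Q+J+\mathcal{Z}+K$ and retaining only $\mathcal{Z}$, Taylor-expanding $\bar{G}$ about $0$, and bounding the remainder via H\"older, Burkholder--Davis--Gundy and the stopping-time bounds. The one imprecision is your parenthetical claim that $Q$ is $\bar{\mathcal{O}}(\varepsilon^{1-2\kappa})$: $Q(T)=e^{\varepsilon^{-2}\mathcal{A}_{s}T}\psi(0)$ is \emph{not} small uniformly in $T$, and the paper instead discards the $Q$-terms by integrating $\|Q\|^{2}$ in time (which is where the semigroup decay actually yields the factor $\varepsilon^{2}$) --- but since you invoke exactly that decay, the argument goes through as in the paper.
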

	\begin{proof}
		Since the proof is fairly standard, we will omit some straightforward computations.
		
		By It$\hat{\textup{o}}$ formula, we derive that
		\begin{align*}
		\textup{d}B_{c}(\varphi,\mathcal{A}_{s}^{-1}\psi)&=B_{c}(\mathcal{L}_{c} \varphi,\mathcal{A}_{s}^{-1}\psi)\textup{d}T+2\varepsilon^{-1}B_{c}(B_{c}(\varphi,\psi),\mathcal{A}_{s}^{-1}\psi)\textup{d}T+\varepsilon^{-2}B_{c}(\varphi,\psi)\textup{d}T\\
		&~~~~+\varepsilon^{-1}B_{c}(B_{c}(\psi),\mathcal{A}_{s}^{-1}\psi)\textup{d}T+\varepsilon^{-1}B_{c}(\varphi,\mathcal{A}_{s}^{-1}B_{s}(\varphi+\psi)    )\textup{d}T\\
		&~~~~+B_{c}(\varphi,\mathcal{L}_{s}\mathcal{A}_{s}^{-1}\psi)\textup{d}T+\varepsilon^{-2}\sum^{\infty}_{j=1}B_{c}(\bar{G}_{c}(\varepsilon \varphi+\varepsilon \psi)f_{j}, \mathcal{A}_{s}^{-1}\widetilde{G}f_{j})\textup{d}T\\
		&~~~~+\varepsilon^{-2}\sum^{\infty}_{j=1} B_{c}(\bar{G}_{c}(\varepsilon \varphi+\varepsilon \psi)f_{j}, \mathcal{A}_{s}^{-1}\bar{G}_{s}(\varepsilon \varphi+\varepsilon \psi)f_{j})\textup{d}T\\
		&~~~~+\varepsilon^{-1}B_{c}(\varphi, \mathcal{A}_{s}^{-1}\widetilde{G}\textup{d}\widetilde{W})+\varepsilon^{-1}B_{c}(\varphi, \mathcal{A}_{s}^{-1}\bar{G}_{s}(\varepsilon \varphi+\varepsilon \psi)\textup{d}\widetilde{W})\\
		&~~~~+\varepsilon^{-1}B_{c}(\bar{G}_{c}(\varepsilon \varphi+\varepsilon \psi)\textup{d}\widetilde{W}, \mathcal{A}^{-1}_{s}\psi).	
		\end{align*}
		Firstly, let us prove that
		\begin{align}\label{eq080}
		\int^{T}_{0}B_{c}(B_{c}(\varphi,\psi),\mathcal{A}^{-1}_{s}\psi)\textup{d}s=\int^{T}_{0}B_{c}(B_{c}(\varphi,\mathcal{Z}),\mathcal{A}^{-1}_{s}\mathcal{Z})\textup{d}s+\bar{\mathcal{O}}(\varepsilon^{1-5\kappa }).
		\end{align}
		Recalling the mild solution of $\psi$, we have
		\begin{align*}
		&~~~~\int^{T}_{0}B_{c}(B_{c}(\varphi,\psi),\mathcal{A}^{-1}_{s}\psi)\textup{d}s-\int^{T}_{0}B_{c}(B_{c}(\varphi,\mathcal{Z}),\mathcal{A}^{-1}_{s}\mathcal{Z})\textup{d}s\\
		&=\int^{T}_{0}B_{c}(B_{c}(\varphi,Q),\mathcal{A}^{-1}_{s}Q)\textup{d}s+\int^{T}_{0}B_{c}(B_{c}(\varphi,Q),\mathcal{A}^{-1}_{s}(J+K))\textup{d}s\\
		&~~~~+\int^{T}_{0}B_{c}(B_{c}(\varphi,Q),\mathcal{A}^{-1}_{s}\mathcal{Z})\textup{d}s+\int^{T}_{0}B_{c}(B_{c}(\varphi,J+K),\mathcal{A}^{-1}_{s}Q)\textup{d}s\\
		&~~~~+\int^{T}_{0}B_{c}(B_{c}(\varphi,J+K),\mathcal{A}^{-1}_{s}(J+K))\textup{d}s+\int^{T}_{0}B_{c}(B_{c}(\varphi,J+K),\mathcal{A}^{-1}_{s}\mathcal{Z})\textup{d}s\\
		&~~~~+\int^{T}_{0}B_{c}(B_{c}(\varphi,\mathcal{Z}),\mathcal{A}^{-1}_{s}Q)\textup{d}s+\int^{T}_{0}B_{c}(B_{c}(\varphi,\mathcal{Z}),\mathcal{A}^{-1}_{s}(J+K))\textup{d}s
		\end{align*}
		Estimate the first term of the right hand of the above equation as follow:
		\begin{align*}
		&~~~~\mathbb{E}\Big(\sup_{0\leq T\leq \tau^{\star}_{1}}\|\int^{T}_{0}B_{c}(B_{c}(\varphi,Q),\mathcal{A}^{-1}_{s}Q)\textup{d}s\|_{\alpha}^{p}\Big)\\
		&\leq C\mathbb{E}\Big(\sup_{0\leq T\leq \tau^{\star}_{1}}(\int^{T}_{0}\|B_{c}(B_{c}(\varphi,Q),\mathcal{A}^{-1}_{s}Q)\|_{\alpha}\textup{d}s)^{p}\Big)\\
		&\leq C\varepsilon^{-\kappa p}\Big(\int^{T_{0}}_{0}\|e^{\mathcal{A}_{s}s\varepsilon^{-2}}\varphi(0)\|_{\alpha}^{2}\textup{d}s\Big)\\
		&\leq C\varepsilon^{2p-\frac{5\kappa p}{3}}.
		\end{align*}
		By similar technique, it is easy to obtain (\ref{eq080}) by Lemma \ref{l02}-\ref{l30}.\\
		Furthermore, we can prove that
		\begin{align*}
		&\int^{T}_{0}B_{c}(B_{c}(\psi),\mathcal{A}_{s}^{-1}\psi)\textup{d}s=\int^{T}_{0}B_{c}(B_{c}(\mathcal{Z}), \mathcal{A}_{s}^{-1}\mathcal{Z})\textup{d}s+\bar{\mathcal{O}}(\varepsilon^{1-6\kappa }),\\
		&\int^{T}_{0}B_{c}(\varphi,\mathcal{A}_{s}^{-1}B_{s}(\varphi+\psi))\textup{d}s=\int^{T}_{0}B_{c}(\varphi,\mathcal{A}_{s}^{-1}B_{s}(\varphi+\mathcal{Z}))\textup{d}s+\bar{\mathcal{O}}(\varepsilon^{1-5\kappa }),\\
		&\int^{T}_{0}B_{c}(B_{c}(\psi),\mathcal{A}_{s}^{-1}\psi)\textup{d}s=\int^{T}_{0}B_{c}(B_{c}(\mathcal{Z}), \mathcal{A}_{s}^{-1}\mathcal{Z})\textup{d}s+\bar{\mathcal{O}}(\varepsilon^{1-6\kappa }).
		\end{align*}
		Then, by Assumption \ref{assu6} and Taylor formula, we obtain
		\begin{align*}
		&\int^{T}_{0}\sum^{\infty}_{j=1}[B_{c}(\bar{G}_{c}(\varepsilon \varphi+\varepsilon \psi)f_{j}, \mathcal{A}_{s}^{-1}\widetilde{G}f_{j})-B_{c}(\bar{G}_{c}'(0)(\varphi+\mathcal{Z})f_{j}, \mathcal{A}^{-1}_{s}\widetilde{G}f_{j})]\textup{d}s=\bar{\mathcal{O}}(\varepsilon^{2-2\kappa }),
		\\
		&\int^{T}_{0}\sum^{\infty}_{j=1} B_{c}(\bar{G}_{c}(\varepsilon \varphi+\varepsilon \psi)f_{j}, \mathcal{A}_{s}^{-1}\bar{G}_{s}(\varepsilon \varphi+\varepsilon \psi)f_{j})\textup{d}s=\bar{\mathcal{O}}(\varepsilon^{2-2\kappa }),
		\end{align*}
		by Assumption \ref{assu6} and Burkholder-Davis-Gundy inequality, we obtain the stochastic terms are $\bar{\mathcal{O}}(\varepsilon^{1-2\kappa })$,
		and by simple calculation, we obtain other terms are $\bar{\mathcal{O}}(\varepsilon^{2-2\kappa })$.
		
		We complete the proof.
	\end{proof}

	To analyze $B_{c}(\psi)$ precisely, we need to introduce several notations.	
	For a Hilbert Space $\mathcal{H}$, denote the tensor product of it by $v_{1}\otimes v_{2}$, and the symmetric tensor product of it by $v_{1}\otimes_{s}v_{2}=\frac{1}{2}\big(v_{1}\otimes v_{2}+v_{1}\otimes v_{2}\big)$, where $v_{1}, v_{2}\in \mathcal{H}$. Furthermore, define the scalar product  in the
	tensor product space $\mathcal{H}_{\alpha}\otimes \mathcal{H}_{\beta}$ by $\langle u_{1}\otimes v_{1}, u_{2}\otimes v_{2} \rangle_{\alpha,\beta}:=\langle u_{1}, u_{2}\rangle_{\alpha}\langle v_{1}, v_{2} \rangle_{\beta}$, where $u_{1}, u_{2}\in \mathcal{H}_{\alpha}, v_{1}, v_{2}\in \mathcal{H}_{\beta}$. For simplified notation, we adopt $\langle,\rangle_{\alpha}$ instead of $\langle,\rangle_{\alpha,\alpha}$. The norm of $\mathcal{H}_{\alpha}\otimes\mathcal{H}_{\beta}$ is induced by such scalar product.
	For two linear operators $\mathcal{L}_{a}$ and $\mathcal{L}_{b}$ on $\mathcal{H}$, define the symmetric tensor product of them by $\big(\mathcal{L}_{a}\otimes_{s}\mathcal{L}_{b}\big)(v_{1}\otimes v_{2})=\frac{1}{2}
	\big(\mathcal{L}_{a} v_{1}\otimes \mathcal{L}_{b} v_{2}+\mathcal{L}_{b}v_{2}\otimes\mathcal{L}_{a}v_{1}\big).$
	\begin{lemma}\label{l2}
		Under Assumption \ref{assu1}, \ref{assu2}, \ref{assu3}, \ref{assu3-1}, \ref{assu5}, \ref{assu6}, we have
		\begin{align*}
		\int^{T}_{0}B_{c}(\psi)\textup{d}s&=-\varepsilon\int^{T}_{0}B_{c}(I\otimes_{s}\mathcal{A}_{s})^{-1}\Big(\mathcal{Z}\otimes_{s}B_{s}(\varphi+\mathcal{Z})\Big)\textup{d}s\\
		&~~-\varepsilon\int^{T}_{0}\sum^{N}_{j=n+1}\alpha_{j}B_{c}(I\otimes_{s}\mathcal{A}_{s})^{-1}\Big(e_{i}\otimes_{s}\bar{G}_{s}'(0)(\varphi+\mathcal{Z})f_{j}\Big)\textup{d}s\\
		&~~-\varepsilon\int^{T}_{0}B_{c}(I\otimes_{s}\mathcal{A}_{s})^{-1}\Big(\mathcal{Z}\otimes_{s}\widetilde{G}\textup{d}\widetilde{W}\Big)+R_{2}(T),
		\end{align*}
		with $R_{2}(T)=\bar{\mathcal{O}}(\varepsilon^{2-6\kappa})$.
	\end{lemma}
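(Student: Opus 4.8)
The plan is to mimic the structure of the proof of Lemma \ref{l1}: apply It\^o's formula to a suitable auxiliary process whose drift contains the term $\varepsilon^{-2}B_{c}(\psi)$ (or rather $\varepsilon^{-1}B_{c}(\psi)$ after integrating against the slow time scale), then isolate that term, move everything else to the right-hand side, and estimate each of the resulting terms. Concretely, since $\mathcal{A}_{s}\psi$ appears at leading order $\varepsilon^{-2}$ in the $\psi$-equation \eqref{eqy01}, the natural object to differentiate is a quadratic functional of $\psi$ of the form $B_{c}\big((I\otimes_{s}\mathcal{A}_{s})^{-1}(\psi\otimes_{s}\psi)\big)$, i.e.\ the ``tensorized'' analogue of what was used for $B_{c}(\varphi,\psi)$ in Lemma \ref{l1}. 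Applying It\^o's formula to this functional, the term $\tfrac12(I\otimes_{s}\mathcal{A}_{s})^{-1}$ acting on the $\mathcal{A}_{s}$-drift reproduces (up to constants) $B_{c}(\psi)$ at order $\varepsilon^{-2}$; all other contributions --- the boundary terms $B_{c}((I\otimes_{s}\mathcal{A}_{s})^{-1}(\psi\otimes_{s}\psi))\big|_{0}^{T}$, the lower-order drift terms involving $\mathcal{L}_{s}$, $B_{s}$, and the quadratic variation terms from the diffusion $\varepsilon^{-1}\widetilde G+\varepsilon^{-1}\bar G_{s}$ --- are lower order.

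Next I would substitute the mild-solution decomposition $\psi = Q+J+\mathcal{Z}+K$ (as recorded before Lemma \ref{l02}) into each surviving term and keep only the contributions in which $\psi$ is replaced by $\mathcal{Z}$, discarding the rest into the remainder $R_{2}(T)$. The key inputs here are Lemma \ref{l02} ($J=\bar{\mathcal{O}}(\varepsilon^{1-2\kappa})$), Lemma \ref{l30} ($K=\bar{\mathcal{O}}(\varepsilon^{1-2\kappa})$), Lemma \ref{l03} ($\mathcal{Z}=\bar{\mathcal{O}}(\varepsilon^{-\kappa/3})$), the exponential decay estimate for $Q(T)=e^{\varepsilon^{-2}\mathcal{A}_{s}T}\psi(0)$ from Lemma \ref{l51} (which gives an extra power of $\varepsilon$ after integration in time, as in the computation of the first term in the proof of Lemma \ref{l1}), together with Assumption \ref{assu3-1} which guarantees $B_{c}(e_{k},e_{k})=0$ for $k>n$ --- this is exactly what is needed so that the ``pure $\mathcal{Z}\otimes_{s}\mathcal{Z}$'' contribution to $B_{c}(\psi)$ vanishes and only the cross terms $\mathcal{Z}\otimes_{s}B_{s}(\varphi+\mathcal{Z})$, $\mathcal{Z}\otimes_{s}\bar G_{s}'(0)(\varphi+\mathcal{Z})$, and $\mathcal{Z}\otimes_{s}\widetilde G\,\textup{d}\widetilde W$ survive. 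For the diffusion-generated contributions I would use Assumption \ref{assu6} (linearizing $\bar G_{s}(\varepsilon\varphi+\varepsilon\psi)$ via Taylor's formula as in \eqref{eq0104}) and the Burkholder-Davis-Gundy inequality, exactly as in the proofs of Lemmas \ref{lem002} and \ref{l1}.

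The main obstacle, as in Lemma \ref{l1}, is the bookkeeping: there are many terms, and one must carefully track the $\varepsilon$-powers together with the $\varepsilon^{-\kappa}$ losses coming from the stopping-time bounds $\|\varphi\|_{\alpha},\|\psi\|_{\alpha}\le\varepsilon^{-\kappa}$ on $[0,\tau_{1}^{\star}]$ and the $\varepsilon^{-\kappa/3}$ loss from $\mathcal{Z}$, to confirm that every discarded term is indeed $\bar{\mathcal{O}}(\varepsilon^{2-6\kappa})$. The most delicate point is the term in the It\^o expansion coming from the quadratic variation of the dominant diffusion $\varepsilon^{-1}\widetilde G_{s}$ in the $\psi$-equation: it is a priori of order $\varepsilon^{-2}$, the same order as the $B_{c}(\psi)$ term we want to extract, so one must check that after projection by $B_{c}(I\otimes_{s}\mathcal{A}_{s})^{-1}$ it either cancels or recombines with the It\^o-correction of $B_{c}(\varphi,\mathcal{A}_s^{-1}\psi)$-type terms already handled in Lemma \ref{l1}; tracking this cancellation correctly is where the symmetric-tensor formalism (and the commutation of $\mathcal{L}$ with $P_{c},P_{s}$) is really used. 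Once that is settled, the remaining estimates are routine applications of Lemma \ref{l51}, H\"older's inequality and BDG, performed term by term, and I would only display one representative computation (say the $Q$-term, as was done in Lemma \ref{l1}) and assert the rest by analogy.
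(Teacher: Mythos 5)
Your proposal follows essentially the same route as the paper: the paper applies It\^o's formula to $\psi\otimes\psi$, inverts $(I\otimes_{s}\mathcal{A}_{s})$ on the order-$\varepsilon^{-2}$ drift to isolate $\int_{0}^{T}B_{c}(\psi)\,\textup{d}s$, substitutes $\psi\approx\mathcal{Z}$ and Taylor-expands $\bar{G}_{s}$, exactly as you describe. The only clarification worth making is that the order-$\varepsilon^{-2}$ It\^o correction $\varepsilon^{-2}\sum_{j=n+1}^{N}\alpha_{j}^{2}(e_{j}\otimes e_{j})$ does not need to ``recombine'' with anything from Lemma \ref{l1}: after applying $B_{c}(I\otimes_{s}\mathcal{A}_{s})^{-1}$ it equals $-\sum_{j}\frac{\alpha_{j}^{2}}{\lambda_{j}}B_{c}(e_{j},e_{j})$, which vanishes outright by Assumption \ref{assu3-1} (the commutation of $\mathcal{L}$ with the projections plays no role in this cancellation).
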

	
	\begin{proof}
		Applying It$\hat{\textup{o}}$ formula to $\psi\otimes \psi$, we obtain
		\begin{align*}
		\frac{1}{2}\textup{d}(\psi\otimes \psi)&= \varepsilon^{-2}(\psi\otimes_{s}\mathcal{A}_{s}\psi)\textup{d}T+(\psi\otimes_{s} \mathcal{L}_{s}\psi)\textup{d}T+\varepsilon^{-1} (\psi\otimes_{s}B_{s}(\varphi+\psi))\textup{d}T\\
		&~~+\varepsilon^{-1}(\psi\otimes_{s} \widetilde{G}\textup{d}\widetilde{W})+\varepsilon^{-1}(\psi\otimes_{s} \bar{G}_{s}(\varepsilon \varphi+\varepsilon \psi)\textup{d}\widetilde{W})\\
		&~~+\varepsilon^{-2}\sum^{\infty}_{j=1}\bar{G}_{s}(\varepsilon \varphi+\varepsilon \psi)f_{j}\otimes \bar{G}_{s}(\varepsilon \varphi+\varepsilon \psi)f_{j}\textup{d}T\\ &~~+\varepsilon^{-2}\sum_{j=n+1}^{N}\alpha_{j}^{2}(e_{j}\otimes e_{j})\textup{d}T+\varepsilon^{-2}\sum^{N}_{j=n+1}\alpha_{j}(e_{j}\otimes_{s}\bar{G}_{s}(\varepsilon \varphi+\varepsilon \psi)f_{j})\textup{d}T.
		\end{align*}
		Note that $(I\otimes_{s} \mathcal{A}_{s})^{-1}$ is bounded operator from $\mathcal{H}_{\alpha-1}\otimes\mathcal{H}_{\alpha-1}$ to $\mathcal{H}_{\alpha}\otimes\mathcal{H}_{\alpha}$.
		The remaining proof is similar to that of Lemma \ref{l1}, so it is omitted.
		
		We complete the proof.
	\end{proof}

	\begin{lemma}\label{l21}
		Under Assumption \ref{assu1}, \ref{assu2}, \ref{assu3}, \ref{assu5}, \ref{assu6}, we have
		\begin{equation*}
		\int^{T}_{0}\varepsilon^{-1}\bar{G}_{c}(\varepsilon \varphi+\varepsilon \psi)\textup{d}\tilde{W}=\int^{T}_{0}\bar{G}'_{c}(0)(\varphi+\mathcal{Z})\textup{d}\tilde{W}+R_{3}(T),
		\end{equation*}
		with $R_{3}(T)=\bar{\mathcal{O}}(\varepsilon^{1-3\kappa })$
	\end{lemma}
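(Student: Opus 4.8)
The plan is to split $R_{3}(T)$ into two stochastic integrals: a second-order Taylor remainder of $\bar{G}$ (which carries the singular prefactor $\varepsilon^{-1}$ but is quadratically small), and the error incurred by replacing $\psi$ by its leading Ornstein--Uhlenbeck part $\mathcal{Z}$. After the split everything reduces to the Burkholder--Davis--Gundy inequality together with Assumption \ref{assu6} and the stopping-time bounds of Definition \ref{defs2}. Concretely, since $\bar{G}(0)=0$, Taylor's formula (the analogue of (\ref{eq0104})) gives $\bar{G}_{c}(\varepsilon\varphi+\varepsilon\psi)=\varepsilon\bar{G}'_{c}(0)(\varphi+\psi)+\bar{G}''_{c}(\theta)(\varepsilon\varphi+\varepsilon\psi,\varepsilon\varphi+\varepsilon\psi)$ with $\theta$ on the segment joining $0$ and $\varepsilon\varphi+\varepsilon\psi$, so $R_{3}(T)=I_{1}(T)+I_{2}(T)$ where
\begin{equation*}
I_{1}(T):=\int_{0}^{T}\bar{G}'_{c}(0)(\psi-\mathcal{Z})\,\textup{d}\widetilde{W},\qquad I_{2}(T):=\int_{0}^{T}\varepsilon^{-1}\bar{G}''_{c}(\theta)(\varepsilon\varphi+\varepsilon\psi,\varepsilon\varphi+\varepsilon\psi)\,\textup{d}\widetilde{W}.
\end{equation*}

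For $I_{2}$, on $[0,\tau^{\star}_{1}]$ we have $\|\varphi\|_{\alpha},\|\psi\|_{\alpha}\le\varepsilon^{-\kappa}$, hence $\|\theta\|_{\alpha}\le2\varepsilon^{1-\kappa}\le r$ for $\varepsilon$ small, and the second-derivative estimate in (\ref{eq26}) yields $\|\varepsilon^{-1}\bar{G}''_{c}(\theta)(\varepsilon\varphi+\varepsilon\psi,\varepsilon\varphi+\varepsilon\psi)\|_{\mathscr{L}_{2}(U,\mathcal{H}^{\alpha})}\le\varepsilon\,l_{r}\|\varphi+\psi\|_{\alpha}^{2}\le C\varepsilon^{1-2\kappa}$; Burkholder--Davis--Gundy then gives $\mathbb{E}(\sup_{T\le\tau^{\star}_{1}}\|I_{2}(T)\|_{\alpha}^{p})\le C\varepsilon^{(1-2\kappa)p}$ for $p\ge2$. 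For $I_{1}$, I would invoke the mild-solution decomposition $\psi=Q+J+\mathcal{Z}+K$ of Subsection \ref{reduced}, so $\psi-\mathcal{Z}=Q+J+K$, and bound the three resulting stochastic integrals separately. By Burkholder--Davis--Gundy and the first bound in (\ref{eq26}), the $J$- and $K$-parts are dominated by $C\,\mathbb{E}(\int_{0}^{\tau^{\star}_{1}}\|J(s)\|_{\alpha}^{2}\,\textup{d}s)^{p/2}$ and $C\,\mathbb{E}(\int_{0}^{\tau^{\star}_{1}}\|K(s)\|_{\alpha}^{2}\,\textup{d}s)^{p/2}$, which are $\bar{\mathcal{O}}(\varepsilon^{1-2\kappa})$ by Lemmas \ref{l02} and \ref{l30}. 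For the $Q$-part, Lemma \ref{l51} with $\beta=0$ gives $\|Q(s)\|_{\alpha}\le Me^{-\rho\varepsilon^{-2}s}\|\psi(0)\|_{\alpha}$, and since $\|\psi(0)\|_{\alpha}\le\varepsilon^{-\kappa}$ on $\{\tau^{\star}_{1}>0\}$ (otherwise $\tau^{\star}_{1}=0$ and $R_{3}\equiv0$), we get $\int_{0}^{\tau^{\star}_{1}}\|Q(s)\|_{\alpha}^{2}\,\textup{d}s\le M^{2}\varepsilon^{-2\kappa}\int_{0}^{\infty}e^{-2\rho\varepsilon^{-2}s}\,\textup{d}s\le C\varepsilon^{2-2\kappa}$, whence the $Q$-part is $\bar{\mathcal{O}}(\varepsilon^{1-\kappa})$.

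Collecting these estimates, $R_{3}=I_{1}+I_{2}=\bar{\mathcal{O}}(\varepsilon^{1-2\kappa})\subseteq\bar{\mathcal{O}}(\varepsilon^{1-3\kappa})$ since $\kappa>0$ (the stated exponent $1-3\kappa$ is just a common, slightly lossy symbol shared with $R_{1}$ and $R_{2}$), and the range $1<p<2$ follows from H\"older's inequality. The step needing the most care is $I_{2}$: the prefactor $\varepsilon^{-1}$ would be fatal were it not absorbed by the quadratic factor $\|\varepsilon\varphi+\varepsilon\psi\|_{\alpha}^{2}=\varepsilon^{2}\|\varphi+\psi\|_{\alpha}^{2}$, so one must first use the stopping time both to make the bound on $\bar{G}''$ in (\ref{eq26}) applicable and to extract the saving power of $\varepsilon$; the other delicate point is the $Q$-term, where the exponential decay of the analytic semigroup (Lemma \ref{l51}) is essential, since the crude bound $\|Q(s)\|_{\alpha}\le\varepsilon^{-\kappa}$ alone would be far too weak.
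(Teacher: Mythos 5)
Your proof is correct and follows essentially the same route the paper intends: Taylor-expand $\bar{G}_{c}$ around $0$ (using $\bar{G}(0)=0$ and the second-derivative bound in (\ref{eq26}) to absorb the $\varepsilon^{-1}$ prefactor), then replace $\psi$ by $\mathcal{Z}$ via the mild-solution decomposition $\psi=Q+J+\mathcal{Z}+K$ and the bounds of Lemmas \ref{l51}, \ref{l02} and \ref{l30}, finishing with Burkholder--Davis--Gundy under the stopping time. Your resulting rate $\bar{\mathcal{O}}(\varepsilon^{1-2\kappa})$ is in fact slightly sharper than the stated $\bar{\mathcal{O}}(\varepsilon^{1-3\kappa})$, which it of course implies.
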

	\begin{proof}
		The proof mainly relies on Taylor formula, and is similar to Lemma \ref{l1}, so we do not present the detail.
		
		We complete the proof.
	\end{proof}
	
	Although we can replace $\psi(T)$ by $\varphi(T)$ in slow modes by previous lemmas, one still need to eliminate the
	fast Ornstein-Uhlenbeck process $\mathcal{Z}(T)$ appearing in the drift and diffusion terms. Therefore, we will present some useful lemmas to deal with such problem.
	We introduce some notations before showing them.
	
	Set $\hat{\mathcal{Z}}(T):=\sum\limits^{N}_{k=n+1}\hat{\mathcal{Z}}_{k}(T)$, where $\hat{\mathcal{Z}}_{k}(T)=e^{-\lambda_{k}\varepsilon^{-2}T}\hat{\mathcal{Z}}_{k}(0)+\mathcal{Z}_{k}(T)$,
	with $\hat{\mathcal{Z}}_{k}(0)$ is the normal distribution $N(0, \frac{\alpha_{k}^{2}}{2\lambda_{k}})$.  Set $\hat{G}:=\sum\limits^{N}_{k=n+1}\frac{\alpha_{k}^{2}}{2\lambda_{k}}(e_{k}\otimes e_{k})$.
	
	\begin{lemma}\textup{\cite{Bl1}}
		For every $p>0$ and $\alpha>0$, there exists a positive constant $C$ such that the bounds
		\begin{align*}
		&\mathbb{E}\Big\|\int^{T}_{s}\hat{\mathcal{Z}}(r)\textup{d}r\Big\|_{\alpha}^{2p}\leq C(T-s)^{p}\varepsilon^{2p},\\
		&\mathbb{E}\Big\|\int^{T}_{s}\big(\hat{\mathcal{Z}}(r)\otimes\hat{\mathcal{Z}}(r)-\hat{G})\textup{d}r\big\|_{\alpha}^{2p}\leq C(T-s)^{p}\varepsilon^{2p},\\
		&\mathbb{E}\Big\|\int^{T}_{s}\big(\hat{\mathcal{Z}}(r)\otimes\hat{\mathcal{Z}}(r)\otimes\hat{\mathcal{Z}}(r)\big)\textup{d}r\Big\|_{\alpha}^{2p}\leq C(T-s)^{p}\varepsilon^{2p},
		\end{align*}
		hold for every $T>s>0$.
	\end{lemma}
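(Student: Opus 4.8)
The statement to be proved is the three bounds on the Ornstein--Uhlenbeck-type process $\hat{\mathcal Z}(T)=\sum_{k=n+1}^N\hat{\mathcal Z}_k(T)e_k$, namely that the time integrals of $\hat{\mathcal Z}$, of $\hat{\mathcal Z}\otimes\hat{\mathcal Z}-\hat G$, and of $\hat{\mathcal Z}^{\otimes 3}$ each have $2p$-th moments bounded by $C(T-s)^p\varepsilon^{2p}$. The plan is to exploit the fact that $\hat{\mathcal Z}_k$ is a \emph{stationary} scalar OU process with invariant law $N(0,\alpha_k^2/2\lambda_k)$ solving $\mathrm d\hat{\mathcal Z}_k=-\varepsilon^{-2}\lambda_k\hat{\mathcal Z}_k\,\mathrm dT+\varepsilon^{-1}\alpha_k\,\mathrm d\tilde\beta_k(T)$, so that the ergodic averaging rate is set by the correlation time $\varepsilon^2/\lambda_k$. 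Since $N$ is finite, all three estimates reduce, after expanding the tensor products in the basis $\{e_k\}$ and using the triangle inequality and equivalence of norms on the finite-dimensional span of $\{e_{n+1},\dots,e_N\}$, to finitely many scalar estimates of the form
\begin{equation*}
\mathbb E\Big|\int_s^T F\big(\hat{\mathcal Z}_{k_1}(r),\hat{\mathcal Z}_{k_2}(r),\dots\big)\,\mathrm dr\Big|^{2p}\le C(T-s)^p\varepsilon^{2p},
\end{equation*}
where $F$ is a monomial of degree $1$, $2$ (centered) or $3$ in the OU components.

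First I would record the elementary facts: stationarity gives $\mathbb E|\hat{\mathcal Z}_k(r)|^{2q}=C_q(\alpha_k^2/2\lambda_k)^q$ uniformly in $r$ and $\varepsilon$, and the exponential mixing $\big|\mathrm{Cov}(\hat{\mathcal Z}_k(r),\hat{\mathcal Z}_k(r'))\big|=\frac{\alpha_k^2}{2\lambda_k}e^{-\varepsilon^{-2}\lambda_k|r-r'|}$, with analogous exponential decay for joint cumulants of products at separated times (the monomials are polynomials in jointly Gaussian variables, hence their correlations are sums of products of covariances, each carrying an exponential factor). The degree-$1$ bound then follows by writing the second moment of $\int_s^T\hat{\mathcal Z}_k(r)\,\mathrm dr$ as a double integral of the covariance, which is $\le\int_s^T\int_s^T\frac{\alpha_k^2}{2\lambda_k}e^{-\varepsilon^{-2}\lambda_k|r-r'|}\,\mathrm dr\,\mathrm dr'\le C(T-s)\varepsilon^2$; for general $p$ one either does the analogous $2p$-fold integral (each of the $p$ "diagonal pairings" contributes $\varepsilon^2$, the time volume contributes $(T-s)^p$, Gaussianity via Wick's theorem making the count explicit) or, more cleanly, uses the Itô representation $\int_s^T\hat{\mathcal Z}_k=\frac{\varepsilon^2}{\lambda_k}\big(\hat{\mathcal Z}_k(s)-\hat{\mathcal Z}_k(T)\big)+\frac{\varepsilon}{\lambda_k}\alpha_k(\tilde\beta_k(T)-\tilde\beta_k(s))$ obtained by integrating the SDE, so the whole quantity is manifestly $\varepsilon\cdot(\text{bounded moments}) + \varepsilon\cdot(\text{Brownian increment over length }T-s)$, giving the $(T-s)^p\varepsilon^{2p}$ scaling after Burkholder--Davis--Gundy. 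This integration-by-parts trick is the cleanest route and I would use it as the backbone.

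For the centered quadratic term, the key observation is that $\hat{\mathcal Z}_k^2-\frac{\alpha_k^2}{2\lambda_k}$ has mean zero, and by Itô's formula $\mathrm d(\hat{\mathcal Z}_k^2)=\big(-2\varepsilon^{-2}\lambda_k\hat{\mathcal Z}_k^2+\varepsilon^{-2}\alpha_k^2\big)\mathrm dT+2\varepsilon^{-1}\alpha_k\hat{\mathcal Z}_k\,\mathrm d\tilde\beta_k$, so $\int_s^T\big(\hat{\mathcal Z}_k^2-\tfrac{\alpha_k^2}{2\lambda_k}\big)\mathrm dr=\frac{\varepsilon^2}{2\lambda_k}\big(\hat{\mathcal Z}_k^2(s)-\hat{\mathcal Z}_k^2(T)\big)+\frac{\varepsilon}{\lambda_k}\alpha_k\int_s^T\hat{\mathcal Z}_k\,\mathrm d\tilde\beta_k$; again a bounded-moment term times $\varepsilon^2$ plus a martingale whose quadratic variation is $\frac{\alpha_k^2}{\lambda_k^2}\varepsilon^2\int_s^T\hat{\mathcal Z}_k^2\,\mathrm dr\le C(T-s)\varepsilon^2$ in $L^p$, so BDG closes it. For the off-diagonal cross terms $\hat{\mathcal Z}_j\hat{\mathcal Z}_k$ with $j\ne k$ (which do not appear in $\hat G$ but do appear when expanding $\hat{\mathcal Z}\otimes\hat{\mathcal Z}$) one uses the product rule $\mathrm d(\hat{\mathcal Z}_j\hat{\mathcal Z}_k)=-\varepsilon^{-2}(\lambda_j+\lambda_k)\hat{\mathcal Z}_j\hat{\mathcal Z}_k\,\mathrm dT+(\text{martingale})$, with no drift-compensation constant needed since independence makes the drift of $\hat{\mathcal Z}_j\hat{\mathcal Z}_k$ vanish in expectation; the same integration-by-parts identity applies. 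The cubic term is handled identically: $\mathrm d(\hat{\mathcal Z}_{k_1}\hat{\mathcal Z}_{k_2}\hat{\mathcal Z}_{k_3})=-\varepsilon^{-2}(\lambda_{k_1}+\lambda_{k_2}+\lambda_{k_3})(\cdots)\mathrm dT+(\text{martingale})$ — note there is \emph{no} constant term here because any nonzero Itô-correction in a degree-$3$ monomial still contains at least one OU factor, so it can be absorbed — hence $\int_s^T\hat{\mathcal Z}^{\otimes 3}$ is $\varepsilon^2$ times bounded boundary moments plus a stochastic integral with $L^p$-controlled quadratic variation of size $(T-s)\varepsilon^2$.

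I expect the main (though modest) obstacle to be purely bookkeeping: expanding $\hat{\mathcal Z}\otimes\hat{\mathcal Z}$ and $\hat{\mathcal Z}^{\otimes 3}$ produces a double and triple sum over $k\in\{n+1,\dots,N\}$, and one must (i) separate the "diagonal" contributions that match the constant $\hat G$ from the genuinely oscillatory ones, (ii) confirm that $\hat G$ is exactly the time-average of $\hat{\mathcal Z}\otimes\hat{\mathcal Z}$ (which holds since $\mathbb E[\hat{\mathcal Z}_j\hat{\mathcal Z}_k]=\delta_{jk}\frac{\alpha_k^2}{2\lambda_k}$), and (iii) keep track that the $\alpha_k$, $\lambda_k$ are $\varepsilon$-independent constants so every constant produced is admissible. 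Since $N<\infty$ all sums are finite and all norms on the relevant finite-dimensional subspace are equivalent, so no summability issue arises; the estimate is then immediate from the scalar identities above, and I would simply remark that the proof follows \cite{Bl1} by applying Itô's formula and BDG to each scalar component.
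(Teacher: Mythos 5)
The paper does not actually prove this lemma; it is quoted verbatim from \cite{Bl1}, so there is no internal proof to compare against. Your argument is, in substance, the argument of that reference: integrate the SDE satisfied by each monomial in the stationary OU components $\hat{\mathcal{Z}}_k$ so as to trade the time integral for $\varepsilon^{2}$ times boundary terms plus $\varepsilon$ times a stochastic integral, then apply Burkholder--Davis--Gundy; finiteness of $N$ reduces everything to finitely many scalar estimates, and the identification $\hat{G}=\mathbb{E}[\hat{\mathcal{Z}}\otimes\hat{\mathcal{Z}}]$ is exactly what makes the quadratic integrand mean-zero. The approach is correct. One point you should make explicit: the boundary contributions such as $\frac{\varepsilon^{2}}{\lambda_k}\big(\hat{\mathcal{Z}}_k(s)-\hat{\mathcal{Z}}_k(T)\big)$ have $2p$-th moment of order $\varepsilon^{4p}$ if you only invoke uniformly bounded stationary moments, and $\varepsilon^{4p}\le C(T-s)^{p}\varepsilon^{2p}$ fails when $T-s\ll\varepsilon^{2}$. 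You need either the increment bound $\mathbb{E}\big|\hat{\mathcal{Z}}_k(T)-\hat{\mathcal{Z}}_k(s)\big|^{2p}\le C\min\big(1,\varepsilon^{-2}(T-s)\big)^{p}$, which follows from the explicit OU covariance, or the trivial case split $T-s\le\varepsilon^{2}$ versus $T-s>\varepsilon^{2}$, the first case being handled by bounding the integral crudely by $(T-s)\sup\|\cdot\|$. With that patch the quadratic and cubic cases also close, since (as you note) the It\^o corrections appearing for repeated indices in the cubic monomials are lower-degree monomials already covered by the degree-one estimate.
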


	\begin{lemma}\label{l3}\textup{\cite{Bl1}}
		Let $\alpha$ be as in Assumption \ref{assu2}. Let $f_{i}$ with $i\in \{1,2,3\}$ be $\tilde{\alpha}$-H$\ddot{o}$lder continuous functions on $[0,\tau^{\star}_{1}]$ with values in $\big((\mathcal{H}^{\alpha})^{\otimes i }\big)^{\star}$, respectively. Let $F_{\varepsilon}$ be given by
		\begin{align*}
		F_{\varepsilon}(t):=\int^{t}_{0}\Big(
		\big(f_{1}(s)\big)(\hat{\mathcal{Z}})+\big(f_{2}(s)\big)(\hat{\mathcal{Z}}\otimes\hat{\mathcal{Z}}-\hat{G})+\big(f_{3}(s)\big)(\hat{\mathcal{Z}}\otimes\hat{\mathcal{Z}}\otimes\hat{\mathcal{Z}})\textup{d}s
		\Big).
		\end{align*}
		Then, for $p>0$ and every $0<\gamma<\frac{2\tilde{\alpha}}{1+2\tilde{\alpha}}$ there exists a positive constant $C$ depending only on $p$ and $\gamma$ such that
		\begin{align*}
		\mathbb{E}\sup_{t\in[0,\tau^{\star}_{1}]}|F_{\varepsilon}(t)|^{p}\leq C\varepsilon^{\gamma p}\big(
		\mathbb{E}\big(\|f_{1}\|_{C^{\tilde{\alpha}}}+\|f_{2}\|_{C^{\tilde{\alpha}}}+\|f_{3}\|_{C^{\tilde{\alpha}}}\big)^{2p}
		\big)^{\frac{1}{2}}
		\end{align*}
		where $\|\cdot\|_{C^{\tilde{\alpha}}}$ denotes the $\tilde{\alpha}$-H$\ddot{o}$lder norm for
		$\big((\mathcal{H}^{\alpha})^{\otimes i}\big)^{\star}$-valued functions on $[0,\tau^{\star}]$.
	\end{lemma}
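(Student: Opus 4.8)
As the attribution indicates, this averaging estimate is the one from \cite{Bl1}; we outline the argument. Set $\mathcal{I}_{1}(t)=\int_{0}^{t}\hat{\mathcal{Z}}(r)\,\textup{d}r$, $\mathcal{I}_{2}(t)=\int_{0}^{t}\big(\hat{\mathcal{Z}}(r)\otimes\hat{\mathcal{Z}}(r)-\hat{G}\big)\,\textup{d}r$ and $\mathcal{I}_{3}(t)=\int_{0}^{t}\hat{\mathcal{Z}}(r)^{\otimes 3}\,\textup{d}r$, so that
\begin{equation*}
F_{\varepsilon}(t)=\sum_{i=1}^{3}\int_{0}^{t}\big(f_{i}(s)\big)\big(\textup{d}\mathcal{I}_{i}(s)\big),
\end{equation*}
which is merely a rewriting of the original integral, since each $\mathcal{I}_{i}$ is absolutely continuous. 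The crucial inputs are the three moment bounds stated in the preceding lemma, namely $\mathbb{E}\|\mathcal{I}_{i}(t)-\mathcal{I}_{i}(s)\|_{\alpha}^{2p}\le C|t-s|^{p}\varepsilon^{2p}$ for all $p\ge 1$. By Kolmogorov's continuity theorem each $\mathcal{I}_{i}$ has a modification which is $\delta$-H\"older on $[0,T_{0}]$ for every $\delta<\tfrac12$, with $\mathbb{E}\|\mathcal{I}_{i}\|_{C^{\delta}([0,T_{0}])}^{2p}\le C\varepsilon^{2p}$; in particular $\mathbb{E}\sup_{[0,T_{0}]}\|\mathcal{I}_{i}\|_{\alpha}^{2p}\le C\varepsilon^{2p}$. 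Moreover, since $\hat{\mathcal{Z}}$ is an essentially stationary fast Ornstein--Uhlenbeck process with correlation time of order $\varepsilon^{2}$, its supremum over $[0,T_{0}]$ grows only polylogarithmically in $\varepsilon$, so $\mathbb{E}\sup_{[0,T_{0}]}\|\hat{\mathcal{Z}}(r)^{\otimes i}\|_{\alpha}^{2p}\le C_{\zeta}\varepsilon^{-\zeta}$ for every $\zeta>0$.

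The plan is to localise in time. Fix a mesh $h\in(\varepsilon^{2},1)$ and partition $[0,\tau^{\star}_{1}]$ into $M\simeq T_{0}/h$ cells $[t_{k},t_{k+1}]$. On each cell split $f_{i}(s)=f_{i}(t_{k})+\big(f_{i}(s)-f_{i}(t_{k})\big)$, so that
\begin{equation*}
\int_{t_{k}}^{t_{k+1}}\big(f_{i}(s)\big)\big(\textup{d}\mathcal{I}_{i}(s)\big)=\big(f_{i}(t_{k})\big)\big(\mathcal{I}_{i}(t_{k+1})-\mathcal{I}_{i}(t_{k})\big)+\int_{t_{k}}^{t_{k+1}}\big(f_{i}(s)-f_{i}(t_{k})\big)\big(\textup{d}\mathcal{I}_{i}(s)\big).
\end{equation*}
Summation by parts turns the sum over $k$ of the frozen terms into boundary contributions plus a sum over increments of $f_{i}$ across the $M$ cells, each multiplied by $\sup\|\mathcal{I}_{i}\|_{\alpha}$; this is bounded by $C\|f_{i}\|_{C^{\tilde{\alpha}}}h^{\tilde{\alpha}-1}\sup\|\mathcal{I}_{i}\|_{\alpha}$, hence by $C\|f_{i}\|_{C^{\tilde{\alpha}}}h^{\tilde{\alpha}-1}\varepsilon$ in $L^{p}$ after Cauchy--Schwarz (pulling out $(\mathbb{E}\|f_{i}\|_{C^{\tilde{\alpha}}}^{2p})^{1/2p}$ and using the bound on $\sup\|\mathcal{I}_{i}\|_{\alpha}$). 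The residual cell terms, whose coefficient has sup-norm $\le\|f_{i}\|_{C^{\tilde{\alpha}}}h^{\tilde{\alpha}}$, are controlled crudely by $C_{\zeta}\|f_{i}\|_{C^{\tilde{\alpha}}}h^{\tilde{\alpha}}\varepsilon^{-\zeta}$, while the incomplete last cell and the passage from the grid to the full $\sup_{t}$ are lower order via the modulus of continuity of $\mathcal{I}_{i}$. Collecting these estimates gives, for every $\zeta>0$,
\begin{equation*}
\mathbb{E}\sup_{t\in[0,\tau^{\star}_{1}]}|F_{\varepsilon}(t)|^{p}\le C_{\zeta}\Big(\mathbb{E}\big(\textstyle\sum_{i}\|f_{i}\|_{C^{\tilde{\alpha}}}\big)^{2p}\Big)^{1/2}\big(h^{\tilde{\alpha}-1}\varepsilon+h^{\tilde{\alpha}}\varepsilon^{-\zeta}\big)^{p},
\end{equation*}
and optimising $h$ already produces the bound with exponent $\gamma$ for any $\gamma<\tilde{\alpha}$. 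To reach the sharper exponent $\tfrac{2\tilde{\alpha}}{1+2\tilde{\alpha}}$ one does not estimate the residual cell terms crudely but iterates the same freezing procedure on a finer sub-mesh --- these residuals are again integrals of the same type, with coefficient small in sup-norm --- and balances all the scales that appear.

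The main obstacle is precisely this last point. A single-scale discretisation yields only the exponent $\tilde{\alpha}-$, which suffices when $\tilde{\alpha}>\tfrac12$ but not otherwise; obtaining the optimal power requires iterating the localisation (equivalently, a sewing-type argument), trading the $h^{\tilde{\alpha}}$ freezing error against the $O(h^{-1})$ number of cells with the correct weighting of $\sup\|\mathcal{I}_{i}\|_{\alpha}$, of order $\varepsilon$, against $\sup\|\hat{\mathcal{Z}}^{\otimes i}\|_{\alpha}$, of order at most $\varepsilon^{-\zeta}$, and carefully absorbing the polylogarithmic factors coming from the fast process. The full bookkeeping is carried out in \cite{Bl1}. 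We also note that the specific centred combinations $\hat{\mathcal{Z}}$, $\hat{\mathcal{Z}}\otimes\hat{\mathcal{Z}}-\hat{G}$ and $\hat{\mathcal{Z}}^{\otimes 3}$ are exactly what make the increment bounds above hold with the clean power $\varepsilon^{2p}$, and hence what make the whole scheme go through.
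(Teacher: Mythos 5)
The paper itself gives no proof of this lemma: it is imported verbatim from \cite{Bl1}, so your deference to that reference at the end is in the same spirit as the paper. Your sketch also correctly identifies the two essential ingredients, namely the moment bounds $\mathbb{E}\|\mathcal{I}_{i}(t)-\mathcal{I}_{i}(s)\|^{2p}\leq C|t-s|^{p}\varepsilon^{2p}$ for the integrated, centred monomials of $\hat{\mathcal{Z}}$, and the freeze-on-a-mesh decomposition of $f_{i}$.

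However, the concrete estimate you give for the frozen part is lossy, and the diagnosis you draw from it is wrong. By summing by parts and bounding everything against $\sup_{t}\|\mathcal{I}_{i}(t)\|_{\alpha}=O(\varepsilon)$ you throw away the gain $h^{1/2}$ encoded in the increment bound, ending up with $h^{\tilde{\alpha}-1}\varepsilon$ for the frozen sum and hence, after balancing against the residual $h^{\tilde{\alpha}}\varepsilon^{-\zeta}$, only the exponent $\gamma<\tilde{\alpha}$. For the value $\tilde{\alpha}=\tfrac13$ actually used in this paper that gives $\varepsilon^{1/3}$ where $\varepsilon^{2/5}$ is claimed, so the lemma as stated is not reached. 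The fix is not an iteration of the localisation or a sewing argument: the single-scale mesh already suffices if you estimate the frozen sum term by term in $L^{p}$, i.e.\ $\bigl\|\sum_{k}f_{i}(t_{k})\bigl(\mathcal{I}_{i}(t_{k+1})-\mathcal{I}_{i}(t_{k})\bigr)\bigr\|_{L^{p}}\leq\sum_{k}\|f_{i}(t_{k})\|_{L^{2p}}\,\|\mathcal{I}_{i}(t_{k+1})-\mathcal{I}_{i}(t_{k})\|_{L^{2p}}\leq CN h^{1/2}\varepsilon\,\bigl(\mathbb{E}\|f_{i}\|_{C^{0}}^{2p}\bigr)^{1/2p}=CT_{0}h^{-1/2}\varepsilon\,(\cdots)$, using the increment moment bound directly rather than the pathwise supremum of $\mathcal{I}_{i}$. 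Balancing $h^{-1/2}\varepsilon$ against $h^{\tilde{\alpha}}\varepsilon^{-\zeta}$ gives $h\simeq\varepsilon^{2/(1+2\tilde{\alpha})}$ and the rate $\varepsilon^{2\tilde{\alpha}/(1+2\tilde{\alpha})-\zeta'}$, which is exactly the stated range of $\gamma$; this one-pass argument is what \cite{Bl1} carries out. So the gap in your write-up is the mis-estimated frozen term together with the incorrect claim that a multi-scale iteration is needed to recover the optimal exponent.
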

	
	With the help of Lemma \ref{l3}, we can average the fast O-U process in the drift term of $\varphi(T)$, and give the next lemma.
	\begin{lemma}\label{lx}
		Under Assumption \ref{assu1}, \ref{assu2}, \ref{assu3}, \ref{assu3-1}, \ref{assu5}, \ref{assu6}, we have
		\begin{align*}
		\varphi(T)=\varphi(0)+\bar{\mathcal{L}} \varphi+2\int^{T}_{0}\mathcal{F}(\varphi)\textup{d}s+\int^{T}_{0}\Gamma(\varphi,\hat{\mathcal{Z}})\textup{d}\widetilde{W}+R_{\varphi}(T),
		\end{align*}
		where $\bar{\mathcal{L}}$ is a linear operator from $\mathcal{H}^{\alpha}$ to $\mathcal{H}^{\alpha-\beta}$ satisfying
		\begin{align*}
		\bar{\mathcal{L}}\varphi&=\mathcal{L}_{c} \varphi-\sum^{N}_{i=n+1}\frac{2\alpha_{i}^{2}}{\lambda_{i}^{2}}B_{c}(B_{c}(\varphi,e_{i}), e_{i})-\sum^{N}_{i=n+1}\frac{\alpha_{i}^{2}}{\lambda_{i}}B_{c}(\varphi,\mathcal{A}_{s}^{-1}B_{s}(e_{i},e_{i}))\\
		&~~-2\sum^{\infty}_{j=1}B_{c}(\bar{G}_{c}'(0)(\varphi)f_{j}, \mathcal{A}^{-1}_{s}\widetilde{G}f_{j})-\sum^{N}_{i=n+1}\frac{\alpha_{i}^{2}}{\lambda_{i}}B_{c}(I\otimes_{s}\mathcal{A}_{s})^{-1}(e_{i}\otimes_{s}B_{s}(\varphi,e_{i}))\\
		&~~-\sum^{N}_{j=n+1}\alpha_{j}B_{c}(I\otimes_{s}\mathcal{A}_{s})^{-1}(e_{j}\otimes_{s}\bar{G}_{s}'(0)(\varphi)f_{j}),
		\end{align*}
		$\Gamma(\cdot,\hat{\mathcal{Z}})$ is a operator from $\mathcal{H}^{\alpha}$ to $\mathscr{L}_{2}(U,\mathcal{H}^{\alpha})$ satisfying
		\begin{align*}
		\Gamma(\varphi,\hat{\mathcal{Z}})=\bar{G}'_{c}(0)(\varphi+\hat{\mathcal{Z}})\cdot-2B_{c}(\varphi,\mathcal{A}_{s}^{-1}\widetilde{G}\cdot)
		-B_{c}(I\otimes_{s}\mathcal{A}_{s})^{-1}(\hat{\mathcal{Z}}\otimes_{s}\widetilde{G}\cdot).
		\end{align*}
		and $R_{\varphi}(T)=\bar{\mathcal{O}}(\varepsilon^{\frac{1}{5}-6\kappa})$.
		
		Moreover, for any $T_{1}, T_{2}\in[0,T_{0}]$, there exists a positive constant $C$ such that
		\begin{equation}\label{eq82}
		\mathbb{E}\|R_{\varphi}(T_{1}\wedge\tau^{\star})-R_{\varphi}(T_{2}\wedge\tau^{\star})\|^{2p}\leq C|T_{1}-T_{2}|^{p}.
		\end{equation}
	\end{lemma}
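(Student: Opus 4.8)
\textbf{Proof proposal for Lemma \ref{lx}.}

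The plan is to start from the mild/differential form of $\varphi(T)$ in \eqref{eqx01} and successively substitute the identities established in Lemmas \ref{l1}, \ref{l2} and \ref{l21}, so that every occurrence of $\psi$ in the drift and diffusion of $\varphi$ is replaced by the fast Ornstein--Uhlenbeck process $\mathcal{Z}$ (plus controlled remainders of order $\varepsilon^{1-c\kappa}$ or better). Concretely, first I would write
\[
\varphi(T)=\varphi(0)+\int_0^T\mathcal{L}_c\varphi\,\mathrm{d}s+2\varepsilon^{-1}\int_0^T B_c(\varphi,\psi)\,\mathrm{d}s+\varepsilon^{-1}\int_0^T B_c(\psi)\,\mathrm{d}s+\varepsilon^{-1}\int_0^T\bar G_c(\varepsilon\varphi+\varepsilon\psi)\,\mathrm{d}\widetilde W,
\]
and insert Lemma \ref{l1} into the second term, Lemma \ref{l2} into the third, and Lemma \ref{l21} into the stochastic integral. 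After this substitution the $\varepsilon^{-1}$ prefactors cancel against the leading $\varepsilon$ inside each identity, producing $O(1)$ terms that depend only on $\varphi$ and $\hat{\mathcal{Z}}$ (after replacing $\mathcal{Z}$ by $\hat{\mathcal{Z}}$ at the cost of an exponentially small error $e^{-\lambda_k\varepsilon^{-2}T}\hat{\mathcal{Z}}_k(0)$, absorbed into the remainder), plus the collected remainder terms $R_1,R_2,R_3$ which are each $\bar{\mathcal{O}}(\varepsilon^{2-6\kappa})$ or $\bar{\mathcal{O}}(\varepsilon^{1-3\kappa})$.

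Next I would isolate the terms that are \emph{linear} in $\hat{\mathcal{Z}}$ or \emph{quadratic} in $\hat{\mathcal{Z}}$ (after subtracting $\hat G$) or \emph{cubic} in $\hat{\mathcal{Z}}$ in the drift, and apply the averaging Lemma \ref{l3}: each such fast-fluctuating integral equals its "average" — zero for the odd ones, and the $\hat G$-contraction for the quadratic ones — up to an error of order $\varepsilon^{\gamma p}$ for $\gamma<\tfrac{2\tilde\alpha}{1+2\tilde\alpha}$, which with $\tilde\alpha$ close to $1/2$ gives the stated exponent $\varepsilon^{1/5-6\kappa}$ (the $1/5$ being a safe value of $\gamma$). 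The surviving averaged quadratic contributions are exactly the $\hat G=\sum \tfrac{\alpha_i^2}{2\lambda_i}e_i\otimes e_i$ contractions appearing in $\bar{\mathcal{L}}\varphi$: the term $-\sum_i\tfrac{2\alpha_i^2}{\lambda_i^2}B_c(B_c(\varphi,e_i),e_i)$ comes from averaging $B_c(B_c(\varphi,\hat{\mathcal{Z}}),\mathcal{A}_s^{-1}\hat{\mathcal{Z}})$ (note $\mathcal{A}_s^{-1}e_i=-\lambda_i^{-1}e_i$), the term $-\sum_i\tfrac{\alpha_i^2}{\lambda_i}B_c(\varphi,\mathcal{A}_s^{-1}B_s(e_i,e_i))$ from averaging $B_c(\varphi,\mathcal{A}_s^{-1}B_s(\varphi,\hat{\mathcal{Z}}))$ paired with $B_c(\varphi,\mathcal{A}_s^{-1}B_s(\hat{\mathcal{Z}}))$, the $(I\otimes_s\mathcal{A}_s)^{-1}(e_i\otimes_s B_s(\varphi,e_i))$ term from Lemma \ref{l2}'s first line, and the deterministic $\bar G_c'(0)$ cross-term survives unaveraged. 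The terms linear in $\hat{\mathcal{Z}}$ that are multiplied against the $\mathrm{d}\widetilde W$ increments cannot be averaged away and are precisely what gets packaged into $\Gamma(\varphi,\hat{\mathcal{Z}})$.

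Finally I would verify the H\"older regularity bound \eqref{eq82}: since $R_\varphi$ is a finite sum of time integrals of processes bounded (in every $L^p$ on $[0,\tau^\star]$) by a constant times $\varepsilon^{-c\kappa}$ together with stochastic integrals with similarly bounded integrands, the increment $R_\varphi(T_1\wedge\tau^\star)-R_\varphi(T_2\wedge\tau^\star)$ is controlled by $|T_1-T_2|$ for the Lebesgue pieces (H\"older with exponent $1$) and by $|T_1-T_2|^{1/2}$ for the martingale pieces via Burkholder--Davis--Gundy, and raising to the $2p$ power and taking expectations gives the $|T_1-T_2|^p$ bound (with the worst exponent dominating); the $\varepsilon$-powers only improve the constant. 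The main obstacle I anticipate is bookkeeping rather than any single hard estimate: one must carefully track which $\varepsilon$-powers and which $\kappa$-powers each of the many substituted and averaged terms carries, ensure that the cross terms between $\varphi$, $\mathcal{Z}$, $Q$, $J$, $K$ are all genuinely of higher order (as in the sample computation inside Lemma \ref{l1}), and — most delicately — confirm that the H\"older exponent $\tilde\alpha$ of the coefficient functions $f_i$ fed into Lemma \ref{l3} is good enough to yield an exponent $\gamma>6\kappa$ (using $\kappa<1/13$) so that $R_\varphi=\bar{\mathcal{O}}(\varepsilon^{1/5-6\kappa})$ is a genuine error term; this forces one to establish a priori H\"older continuity in time of $\varphi$ (hence of the $f_i$) on $[0,\tau^\star]$, which follows from Lemma \ref{l07} and the bounds on $\psi$ but must be made explicit.
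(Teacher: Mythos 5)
Your proposal is correct and follows essentially the same route as the paper: substitute Lemmas \ref{l1}, \ref{l2} and \ref{l21} into the equation for $\varphi$, swap $\mathcal{Z}$ for $\hat{\mathcal{Z}}$ at exponentially small cost, average the linear/quadratic/cubic $\hat{\mathcal{Z}}$-integrals in the drift via Lemma \ref{l3} with $\gamma=\frac{1}{5}$, and control the increments of $R_{\varphi}$ by splitting into Lebesgue and martingale pieces. You also correctly anticipate the one technical prerequisite the paper makes explicit, namely the a priori H\"older-in-time bound on $\varphi$ (the paper fixes $\tilde{\alpha}=\frac{1}{3}$, for which $\gamma=\frac{1}{5}<\frac{2\tilde{\alpha}}{1+2\tilde{\alpha}}$ is admissible) needed to feed the coefficient functions into Lemma \ref{l3}.
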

	\begin{proof}
		Firstly, according to Lemma \ref{l1}-\ref{l21}, we have
		\begin{align}\label{eq086}
		\varphi(T)&=\varphi(0)+\int^{T}_{0}\hat{\mathcal{F}}(\varphi,\mathcal{Z})\textup{d}s+\int^{T}_{0}\Gamma(\varphi,\mathcal{Z})\textup{d}\widetilde{W}+\sum_{k=1}^{3}R_{k}(T),
		\end{align}
		where
		\begin{align*}
		\hat{\mathcal{F}}(\varphi,\mathcal{Z})&=\mathcal{L}_{c}\varphi+2\mathcal{F}(\varphi)-4B_{c}(B_{c}(\varphi,\mathcal{Z}), \mathcal{A}_{s}^{-1}\mathcal{Z})-2B_{c}(B_{c}(\mathcal{Z},\mathcal{Z}), \mathcal{A}_{s}^{-1}\mathcal{Z}) \\
		&~~-4B_{c}(\varphi,\mathcal{A}_{s}^{-1}B_{s}(\varphi,\mathcal{Z}))-2B_{c}(\varphi,\mathcal{A}_{s}^{-1}B_{s}(\mathcal{Z},\mathcal{Z}))\\
		&~~-2\sum^{\infty}_{j=1}B_{c}(\bar{G}_{c}'(0)(\varphi+\mathcal{Z})f_{j}, \mathcal{A}^{-1}_{s}\widetilde{G}f_{j})\\
		&~~-\sum^{N}_{j=n+1}\alpha_{j}B_{c}(I\otimes_{s}\mathcal{A}_{s})^{-1}(e_{j}\otimes_{s}\bar{G}_{s}'(0)(\varphi+\mathcal{Z})f_{j}).
		\end{align*}
		Based on H$\ddot{\textup{o}}$lder inequality and Burkholder-Davis-Gundy inequality, we note that there exists a positive constant $C_{1}$ such that
		\begin{align}\label{eq089}
		\mathbb{E}\big(\sup_{0\leq s<T\leq\tau^{\star}_{1}}\frac{\|\sum\limits^{3}_{k=1}(R_{k}(T)-R_{k}(s))\|_{\alpha}}{(T-s)^{\tilde{\alpha}}}\big)\leq C_{2}\varepsilon^{1-6\kappa},
		\end{align}
		with $0<\tilde{\alpha}<\frac{1}{2}$. For simplify, we choose $\tilde{\alpha}=\frac{1}{3}$ in the following part of this paper.
		Thus, there exists a positive constant $C_{2}$, such that
		\begin{align}\label{eq090}
		\mathbb{E}\big(\sup_{0\leq s<T\leq\tau^{\star}_{1}}\frac{\|\varphi(T)-\varphi(s)\|_{\alpha}}{(T-s)^{\frac{1}{3}}}\big)\leq C_{2}\varepsilon^{-6\kappa}.
		\end{align}
		Combining $\varphi(T)=\bar{\mathcal{O}}(\varepsilon^{-3\kappa})$ with (\ref{eq091}), we get $\|\varphi(T)\|_{C^{\frac{1}{3}}}=\bar{\mathcal{O}}(\varepsilon^{-6\kappa})$.\\
		Next, replacing $\mathcal{Z}$ with $\hat{\mathcal{Z}}$ in (\ref{eq086}), we obtain
		\begin{align}\label{eq091}
		\varphi(T)&=\varphi(0)+\int^{T}_{0}\hat{\mathcal{F}}(\varphi,\hat{\mathcal{Z}})\textup{d}s+\int^{T}_{0}\Gamma(\varphi,\hat{\mathcal{Z}})\textup{d}\widetilde{W}+\sum^{4}_{k=1}R_{k}(T).
		\end{align}
		Here we just estimate one term in $R_{4}(T)$ as follow:
		\begin{align*}
		&~~\mathbb{E}\Big(\sup_{0\leq T\leq \tau^{\star}_{1}}\|\int^{T}_{0}B_{c}(\varphi,\mathcal{A}_{s}^{-1}B_{s}(\varphi,\mathcal{Z}-\hat{\mathcal{Z}}))\textup{d}s\|^{p}\Big)\\
		&\leq C\varepsilon^{-2\kappa p}\sum^{N}_{k=n+1}\mathbb{E}\|\int^{T_{0}}_{0}e^{-\lambda_{k}\varepsilon^{-2}s}\hat{\mathcal{Z}}_{k}(0)\textup{d}s\|^{p}\\
		&\leq C\varepsilon^{2p-2\kappa p}.
		\end{align*}
		We can estimate other terms by similar calculation, and obtain
		$R_{6}(T)=\bar{\mathcal{O}}(\varepsilon^{2-2\kappa})$.
		Then, applying Lemma \ref{l3} to (\ref{eq091}) and choosing $\gamma=\frac{1}{5}$, we can obtain $R_{\varphi}=\mathcal{O}(\varepsilon^{\frac{1}{5}-6\kappa})$.
		
		Obviously, (\ref{eq82}) is a consequence of simple calculation. In fact, we can prove that for $p>0$,
		\begin{align*}
		&~~\mathbb{E}\|\int^{T_{1}\wedge\tau^{\star}_{1}}_{T_{2}\wedge\tau^{\star}_{1}}B_{c}(\varphi,\mathcal{A}_{s}^{-1}B_{s}(\varphi,\mathcal{Z}-\hat{\mathcal{Z}}))\textup{d}s\|^{2p}\\
		&\leq C\varepsilon^{-2\kappa}\sum^{N}_{k=n+1}\mathbb{E}\|\int^{T_{1}}_{T_{2}}e^{-\lambda_{k}\varepsilon^{-2}s}\hat{\mathcal{Z}}_{k}(0)\textup{d}s\|^{2p}\\
		&\leq C|T_{1}-T_{2}|^{p},
		\end{align*}
		and
		\begin{align*}
		&~~\mathbb{E}\|\int^{T_{1}\wedge\tau^{\star}_{1}}_{T_{2}\wedge\tau^{\star}_{1}}B_{c}(\varphi,\mathcal{A}_{s}^{-1}B_{s}(\varphi,\hat{\mathcal{Z}}))\textup{d}s\|^{2p}\\
		&\leq C\varepsilon^{-2\kappa}\sum^{N}_{k=n+1}\mathbb{E}\|\int^{T_{1}}_{T_{2}}\hat{\mathcal{Z}}_{k}(s)\textup{d}s\|^{2p}\\
		&\leq C|T_{1}-T_{2}|^{p},
		\end{align*}
		We can also estimate other terms in $R_{\varphi}$ by similar technique, but the detail is omitted.
		
		We complete the proof.
	\end{proof}
	
	Remove $R_{\varphi}(T)$ from $\varphi(T)$, we gain the reduced system:
	\begin{equation}  \label{eq0130}
	\begin{split}
	\textup{d}y_{1}&=[\bar{\mathcal{L}} y_{1}+2\mathcal{F}(y_{1})]\textup{d}T+\Gamma(y_{1},\hat{\mathcal{Z}})\textup{d}\widetilde{W},\\
	y_{1}(0)&=\varphi(0).
	\end{split}
	\end{equation}
	Observing (\ref{eq0130}), there exists $\hat{\mathcal{Z}}$ in the diffusion term. We will get rid of it in next two subsections. Our next task is to give the bound of  $y_{1}(T)$ and estimate the error between $\varphi(T)$ and $y_{1}(T)$. This work will play an important role in later analysis.
	\begin{lemma}\label{boundb1}
		Let Assumption \ref{assu1}, \ref{assu2}-\ref{assu6} hold.
		Suppose that $\varphi(T)$ and $y_{1}(T)$ are defined in (\ref{eqx01}) and (\ref{eq0130}) with the same initial value $\varphi(0)$.
		For $p>1$, there exists a positive constant $C$, such that
		\begin{align}
		& \mathbb{E}\Big(\sup_{0\leq T \leq T_{0}} \|y_{1}(T)\|^{p}\Big)\leq C\|\varphi(0)\|^{p}+C\Big(\mathbb{E}\big(\sup_{0\leq T\leq T_{0}}\|\hat{\mathcal{Z}}(T)\|^{2p}_{\alpha}\big)\Big)^{\frac{1}{2}}.\label{boundb1z}\\
		\intertext{Moreover, if $\|\varphi(0)\|\leq \varepsilon^{-\frac{\kappa}{3}}$,}
		&\mathbb{E}\Big(\sup_{0\leq T\leq \tau^{\star}}\|\varphi(T)-y_{1}(T)\|^{p}\Big)\leq C\varepsilon^{\frac{p}{5}-12\kappa p},\label{eq0921}\\
		&\mathbb{E}\Big(\sup_{0\leq T\leq \tau^{\star}}\|\varphi(T)\|^{p}\Big)\leq C\varepsilon^{-\frac{\kappa p}{3}}.\label{eq095}
		\end{align}
	\end{lemma}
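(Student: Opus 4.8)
The plan is to run the argument of Lemma~\ref{boundb} once more, with the reduced system (\ref{eq0130}) replacing the amplitude equation (\ref{eq012}) and with the fast Ornstein--Uhlenbeck process $\hat{\mathcal{Z}}$ now sitting inside the diffusion coefficient $\Gamma$. First I would establish the a priori bound (\ref{boundb1z}). Introduce the localizing stopping time $\tau_K:=\inf\{T>0:\|y_1(T)\|>K\}$, apply It\^o's formula to $\|y_1(T)\|^p$ for $p\ge2$ on $[0,\tau_K]$, and estimate the drift by $\langle\bar{\mathcal{L}}y_1,y_1\rangle\le C\|y_1\|^2$ (all $\mathcal{H}^\alpha$-norms are equivalent on the finite-dimensional $\mathcal{N}$, plus Assumption~\ref{assu2}) and $\langle 2\mathcal{F}(y_1),y_1\rangle\le 0$ (dissipativity, Assumption~\ref{assu4}). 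Since $\Gamma(\cdot,\hat{\mathcal{Z}})$ is affine in its first slot and in $\hat{\mathcal{Z}}$, Assumptions~\ref{assu3} and~\ref{assu6} together with the boundedness of $\mathcal{A}_s^{-1}$ and $(I\otimes_s\mathcal{A}_s)^{-1}$ give $\|\Gamma(y_1,\hat{\mathcal{Z}})\|_{\mathscr{L}_2(U,\mathcal{H}^\alpha)}^2\le C(\|y_1\|_\alpha^2+\|\hat{\mathcal{Z}}\|_\alpha^2)$, so the It\^o correction and, after Burkholder--Davis--Gundy, the martingale term both contribute expressions in $\|y_1\|^{p-2}(\|y_1\|^2+\|\hat{\mathcal{Z}}\|_\alpha^2)$; Young's inequality then splits off a term $C\sup_{[0,T_0]}\|\hat{\mathcal{Z}}\|_\alpha^p$. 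Absorbing $\tfrac12\mathbb{E}\sup\|y_1\|^p$, invoking Gronwall, and letting $K\to\infty$ via the monotone-convergence argument used for (\ref{eq081}), one obtains (\ref{boundb1z}) after upgrading $\mathbb{E}\sup\|\hat{\mathcal{Z}}\|_\alpha^p$ to $(\mathbb{E}\sup\|\hat{\mathcal{Z}}\|_\alpha^{2p})^{1/2}$ by H\"older; the range $1\le p<2$ follows by H\"older as well. Using the moment bound for $\hat{\mathcal{Z}}$ analogous to Lemma~\ref{l03} (the term $e^{-\lambda_k\varepsilon^{-2}T}\hat{\mathcal{Z}}_k(0)$ is controlled since $\mathbb{E}|\hat{\mathcal{Z}}_k(0)|^{q}\le C$), which gives $\mathbb{E}\sup_{[0,T_0]}\|\hat{\mathcal{Z}}\|_\alpha^{q}\le C\varepsilon^{-\kappa/3}$, this already yields $\mathbb{E}\sup_{[0,T_0]}\|y_1\|^{q}\le C\varepsilon^{-\kappa q/3}$ whenever $\|\varphi(0)\|\le\varepsilon^{-\kappa/3}$.

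Next I would estimate the error. Set $h(T):=y_1(T)-\varphi(T)+R_\varphi(T)$, with $R_\varphi$ the remainder from Lemma~\ref{lx}, so $y_1-\varphi=h-R_\varphi$. Subtracting (\ref{eq0130}) from the representation of $\varphi$ in Lemma~\ref{lx}, $h$ satisfies an It\^o equation with drift $\bar{\mathcal{L}}(h-R_\varphi)+2[\mathcal{F}(y_1)-\mathcal{F}(y_1-h+R_\varphi)]$ and diffusion $\Gamma(y_1,\hat{\mathcal{Z}})-\Gamma(y_1-h+R_\varphi,\hat{\mathcal{Z}})=[\bar{G}'_c(0)-2B_c(\,\cdot\,,\mathcal{A}_s^{-1}\widetilde{G}\,\cdot\,)](h-R_\varphi)$; the key point is that $\hat{\mathcal{Z}}$ cancels from this difference, since it enters $\Gamma$ additively and independently of the first argument, so no fast noise survives in the equation for $h$. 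Applying It\^o's formula to $\|h\|^p$ for $p\ge2$, I would treat the cubic difference by (\ref{eq44}): dropping the favourable $-C_1\|h\|^{p+2}$ contribution, the term $\|h\|^{p-2}\langle\mathcal{F}(y_1)-\mathcal{F}(y_1-h+R_\varphi),h\rangle$ is bounded by $C(\|h\|^p+\|R_\varphi\|^{2p}+\|R_\varphi\|^p\|y_1\|^p)$, while the linear drift and the It\^o-correction terms are bounded by $C(\|h\|^p+\|R_\varphi\|^p)$ using Assumptions~\ref{assu2},~\ref{assu3},~\ref{assu6}. Then Burkholder--Davis--Gundy, Young's inequality to absorb $\tfrac12\mathbb{E}\sup_{[0,T^\star\wedge\tau^\star_1]}\|h\|^p$, and Gronwall give $\mathbb{E}\sup_{[0,\tau^\star_1]}\|h\|^p\le C\varepsilon^{p/5-12\kappa p}$, once one inserts $R_\varphi=\bar{\mathcal{O}}(\varepsilon^{1/5-6\kappa})$ from Lemma~\ref{lx} and the Step~1 moment bound on $y_1$, and absorbs the extra powers of $\varepsilon^{-\kappa}$ coming from $\sup_{[0,\tau^\star_1]}(\|\varphi\|+\|\psi\|)\le\varepsilon^{-\kappa}$ into the constants --- the same bookkeeping that turns $\varepsilon^{p-9\kappa p}$ into $\varepsilon^{p-18\kappa p}$ in Lemma~\ref{boundb}. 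The triangle inequality $\|\varphi-y_1\|\le\|h\|+\|R_\varphi\|$ then gives (\ref{eq0921}), and combining (\ref{eq0921}) with (\ref{boundb1z}) --- noting that $\varepsilon^{p/5-12\kappa p}$ is negligible against $\varepsilon^{-\kappa p/3}$ for $\kappa$ small --- gives (\ref{eq095}); for $1\le p<2$ one uses H\"older.

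I expect the main obstacle to be precisely the handling of $\hat{\mathcal{Z}}$ in the diffusion of (\ref{eq0130}). For the a priori bound (\ref{boundb1z}) it cannot be removed by Gronwall and must show up explicitly as the extra term $C(\mathbb{E}\sup\|\hat{\mathcal{Z}}\|_\alpha^{2p})^{1/2}$; this is acceptable only because $\hat{\mathcal{Z}}$ is a completely explicit stationary Gaussian process with moments $\bar{\mathcal{O}}(\varepsilon^{-\kappa/3})$. For the error estimate the decisive structural fact is that $\Gamma$ is affine in both arguments, so that $\hat{\mathcal{Z}}$ drops out of $\Gamma(y_1,\hat{\mathcal{Z}})-\Gamma(\varphi,\hat{\mathcal{Z}})$; were this not the case, the equation for $h$ would be driven by an un-averaged fast noise and no polynomial rate in $\varepsilon$ could be extracted --- exactly the obstruction that, as the authors remark, forces the use of weak convergence when $\dim\ker\mathcal{A}>1$. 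Beyond this, the remaining work is the routine but somewhat delicate tracking of the powers of $\varepsilon^{-\kappa}$ through the It\^o expansions and the two Gronwall arguments so as to land on the stated exponents $\varepsilon^{-\kappa p/3}$ and $\varepsilon^{p/5-12\kappa p}$.
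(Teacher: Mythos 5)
Your proposal follows essentially the same route as the paper: a localizing stopping time plus It\^o's formula, Burkholder--Davis--Gundy, Young's inequality and Gronwall for (\ref{boundb1z}) (with the $\hat{\mathcal{Z}}$-moment split off exactly as you describe), and then the substitution $h_{1}:=y_{1}-\varphi+R_{\varphi}$ for the error estimate, exploiting precisely the cancellation of $\hat{\mathcal{Z}}$ in $\Gamma(y_{1},\hat{\mathcal{Z}})-\Gamma(\varphi,\hat{\mathcal{Z}})$ that makes the equation for $h_{1}$ free of un-averaged fast noise. Your identification of the affine structure of $\Gamma$ as the decisive point is exactly what the paper's displayed equation for $h_{1}$ uses implicitly, so the argument is correct and matches the paper's proof.
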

	\begin{proof}
		The proof of this lemma is similar to that of Lemma \ref{boundb}, so we just show the key procedures.
		Define some stopping time
		\begin{equation*}
		\tau_{K_{1}}:=\inf\{T>0, \|y_{1}(T)\|>K_{1}\}.
		\end{equation*}
		
		For $p\geq 2$ and $0 \leq T\leq \tau_{K_{1}}$, It$\hat{\textup{o}}^{,}$s formula yields that
		\begin{equation*}
		\begin{split}
		\|y_{1}(T)\|^{p}&\leq\|\varphi(0)\|^{p}+p\int^{T}_{0}\|y_{1}\|^{p-2}\langle\bar{\mathcal{L}}y_{1},y_{1}\rangle\textup{d}s+2p\int^{T}_{0}\|y_{1}\|^{p-2}\langle\mathcal{F}(y_{1}),y_{1}\rangle\textup{d}s\\
		&\quad+p\int^{T}_{0}\|y_{1}\|^{p-2}\langle y_{1},\Gamma(y_{1},\hat{\mathcal{Z}})\textup{d}\widetilde{W}(s)\rangle\\
		&\quad+Cp(p-1)\int^{T}_{0}(\|y_{1}\|^{p}+\|y_{1}\|^{p-2}\|\hat{\mathcal{Z}}\|^{2})\textup{d}s.
		\end{split}
		\end{equation*}
		For $T_{1}\in[0,T_{0}]$, Utilizing Cauchy-Schwarz inequality, Burkholder-Davis-Gundy inequality and Young's inequality, we derive that
		\begin{equation*}
		\begin{split}
		&\mathbb{E}\Big(\sup_{0\leq T\leq T_{1}\wedge\tau_{K}}\|y_{1}(T)\|^{p}\Big)\\
		\leq&C\|\varphi(0)\|^{p}+C\mathbb{E}\Big(\sup_{0\leq  T\leq T_{0}}\|\hat{\mathcal{Z}}(T)\|^{p}\Big)+C\mathbb{E}\Big(\int^{T_{1}\wedge\tau_{K}}_{0}\|y_{1}(s)\|^{p}\textup{d}s\Big)\\
		&+C\mathbb{E}\Big(\int^{T_{1}\wedge\tau_{K}}_{0}\|y_{1}(s)\|^{2p}\textup{d}s\Big)^{\frac{1}{2}}\\
		\leq &C\|\varphi(0)\|^{p}+C\mathbb{E}\Big(\sup_{0\leq  T\leq T_{0}}\|\hat{\mathcal{Z}}(T)\|^{p}\Big)+C\int^{T_{1}}_{0}\mathbb{E}\Big(\sup_{0\leq s_{1}\leq s\wedge\tau_{K}}\|y_{1}(s_{1})\|^{p}\Big)\textup{d}s\\
		&+\frac{1}{2}\mathbb{E}\Big(\sup_{0\leq T\leq T_{1}\wedge\tau_{K}}\|y_{1}(T)\|^{p}\Big).
		\end{split}
		\end{equation*}
		Gronwall's lemma yields that
		\begin{equation*}
		\mathbb{E}\Big(\sup_{0\leq T\leq T_{0}\wedge\tau_{K}}\|y_{1}(T)\|^{p}\Big)\leq C\|\varphi(0)\|^{p}+C\mathbb{E}\Big(\sup_{0\leq  T\leq T_{0}}\|\hat{\mathcal{Z}}(T)\|^{p}\Big).
		\end{equation*}
		By analogous technique in Lemma \ref{boundb}, we obtain (\ref{boundb1z}).
		
		Introduce $h_{1}(T):=y_{1}(T)-\varphi(T)+R_{\varphi}(T)$.\\
		Note that
		\begin{align*}
		h_{1}(T)&=\int^{T}_{0}\bar{\mathcal{L}}(h_{1}-R_{\varphi})\textup{d}s+2\int^{T}_{0}\mathcal{F}(y_{1})\textup{d}s-2\int^{T}_{0}\mathcal{F}(y_{1}-h_{1}+R_{\varphi})\textup{d}s\\
		&\quad+\int^{T}_{0}\varepsilon^{-1}\bar{G}'_{c}(0)(h_{1}-R_{\varphi})\textup{d}\widetilde{W}+2\int^{T}_{0}B_{c}(R_{\varphi}-h_{1},\mathcal{A}_{s}^{-1}\widetilde{G}\textup{d}\widetilde{W}).
		\end{align*}
		Then, we can obtain (\ref{eq093}) and (\ref{eq095}) by similar procedure in Lemma \ref{boundb}.
		
		We complete the proof.
	\end{proof}
	
	\subsection{Weak convergence for amplitude equations}
	In Subsection \ref{reduced}, we do not eliminate the fast O-U process $\hat{\mathcal{Z}}$ in diffusion term, which causes that the reduced system (\ref{eq0130}) is still affected by $\varepsilon$. In next two Subsections, we attempt to  remove $\hat{\mathcal{Z}}$ from (\ref{eq0130}), obtain amplitude equations, and give rigorous error analysis. This Subsection is devoted to the case that the dimension of kernel space $\mathcal{N}$ is more than one. Briefly, we will provide the amplitude equations after averaging $\hat{\mathcal{Z}}$  in such case, and states that the law of $\varphi(T)$ weakly converge to that of the solution of amplitude equation as $\varepsilon$ tends to $0$. However, the convergence rate we present is not precise as that of Theorem \ref{the1}.
	
	Now let us start with some notations and a key lemma.
	
	Set
	\begin{align*}
	g_{1}(T)&:=\int^{T}_{0}\|\bar{G}'_{c}(0)(y_{1}+\hat{\mathcal{Z}})-2B_{c}(y_{1},\mathcal{A}_{s}^{-1}\widetilde{G} )-B_{c}(I\otimes_{s}\mathcal{A}_{s})^{-1}(\hat{\mathcal{Z}}\otimes_{s}\widetilde{G})\|^{2}_{\mathscr{L}_{2}(U, \mathcal{H}^{\alpha})}\textup{d}s,\\
	g_{2}(T)&:=\int^{T}_{0}\|\bar{G}'_{c}(0)(y_{1})-2B_{c}(y_{1},\mathcal{A}_{s}^{-1}\widetilde{G})\|^{2}_{\mathscr{L}_{2}(U, \mathcal{H}^{\alpha})}\textup{d}s \\
	&~~+\sum^{N}_{k=n+1}\frac{\alpha_{k}^{2}}{2\lambda_{k}}\int^{T}_{0} \|\bar{G}'_{c}(0)e_{k}-B_{c}(I\otimes_{s}\mathcal{A}_{s})^{-1}(e_{k}\otimes_{s}\widetilde{G})\|^{2}_{\mathscr{L}_{2}(U,\mathcal{H}^{\alpha}) }    \textup{d}s, \notag\\
	g_{3}(T)&:=\int^{T}_{0}\|\bar{G}'_{c}(0)(\varphi+\hat{\mathcal{Z}})-2B_{c}(\varphi,\mathcal{A}_{s}^{-1}\widetilde{G} )-B_{c}(I\otimes_{s}\mathcal{A}_{s})^{-1}(\hat{\mathcal{Z}}\otimes_{s}\widetilde{G})\|^{2}_{\mathscr{L}_{2}(U, \mathcal{H}^{\alpha})}\textup{d}s,\notag \\
	g_{4}(T)&:=\int^{T}_{0}\|\bar{G}'_{c}(0)(\varphi)-2B_{c}(\varphi,\mathcal{A}_{s}^{-1}\widetilde{G})\|^{2}_{\mathscr{L}_{2}(U, \mathcal{H}^{\alpha})}\textup{d}s \notag \\
	&~~+\sum^{N}_{k=n+1}\frac{\alpha_{k}^{2}}{2\lambda_{k}}\int^{T}_{0} \|\bar{G}'_{c}(0)e_{k}-B_{c}(I\otimes_{s}\mathcal{A}_{s})^{-1}(e_{k}\otimes_{s}\widetilde{G})\|^{2}_{\mathscr{L}_{2}(U,\mathcal{H}^{\alpha}) }    \textup{d}s.. \notag
	\end{align*}
	\begin{lemma}\label{lemg1}
		Under Assumption \ref{assu1}, \ref{assu2}-\ref{assu6}, we obtain $g_{1}(T)=g_{2}(T)+\bar{\mathcal{O}}(\varepsilon^{\frac{1}{5}-3\kappa})$ and $g_{3}(T)=g_{4}(T)+\bar{\mathcal{O}}(\varepsilon^{\frac{1}{5}-6\kappa})$.
	\end{lemma}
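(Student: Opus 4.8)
The plan is to treat the two claimed identities $g_{1}(T)=g_{2}(T)+\bar{\mathcal{O}}(\varepsilon^{1/5-3\kappa})$ and $g_{3}(T)=g_{4}(T)+\bar{\mathcal{O}}(\varepsilon^{1/5-6\kappa})$ in parallel, since $g_{3},g_{4}$ differ from $g_{1},g_{2}$ only by replacing $y_{1}$ with $\varphi$; the worse error exponent in the second identity reflects only the worse a priori bound $\|\varphi\|_{C^{1/3}}=\bar{\mathcal{O}}(\varepsilon^{-6\kappa})$ from Lemma \ref{lx} versus the corresponding bound for $y_{1}$ obtained from Lemma \ref{boundb1}. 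First I would expand the Hilbert--Schmidt norm inside $g_{1}(T)$ into a sum of scalar products; writing the integrand of $g_1$ as $\|A(s)+B_c(I\otimes_s\mathcal{A}_s)^{-1}(\hat{\mathcal{Z}}\otimes_s\widetilde G)\|^2$ with $A(s):=\bar G'_c(0)(y_1+\hat{\mathcal{Z}})\cdot-2B_c(y_1,\mathcal{A}_s^{-1}\widetilde G\,\cdot)$, one sees the integrand is quadratic and affine-linear in $\hat{\mathcal{Z}}$, so $g_1(T)$ is a sum of three types of time integrals: terms with no factor of $\hat{\mathcal{Z}}$, terms linear in $\hat{\mathcal{Z}}$, and terms quadratic in $\hat{\mathcal{Z}}$ (i.e. containing $\hat{\mathcal{Z}}\otimes\hat{\mathcal{Z}}$). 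The terms independent of $\hat{\mathcal{Z}}$ reproduce exactly the first integral defining $g_2(T)$.

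Next I would isolate the $\hat{\mathcal{Z}}$-dependent contributions and apply Lemma \ref{l3}. Concretely, the linear-in-$\hat{\mathcal{Z}}$ terms can be written in the form $\int_0^T (f_1(s))(\hat{\mathcal{Z}})\,\textup{d}s$ where $f_1(s)$ is built from the Hilbert--Schmidt pairing of $\bar G'_c(0)(y_1)$ (or of the diagonal operator $\widetilde G$) against the $\hat{\mathcal{Z}}$-linear pieces; and the quadratic terms are of the form $\int_0^T(f_2(s))(\hat{\mathcal{Z}}\otimes\hat{\mathcal{Z}}-\hat G)\,\textup{d}s+\int_0^T(f_2(s))(\hat G)\,\textup{d}s$, where I have inserted and subtracted the average $\hat G=\sum_{k=n+1}^N\frac{\alpha_k^2}{2\lambda_k}(e_k\otimes e_k)$. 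The ``$-\hat G$'' piece fits the hypothesis of Lemma \ref{l3} (with $f_3\equiv 0$), while the leftover $\int_0^T(f_2(s))(\hat G)\,\textup{d}s$ is precisely the second sum in the definition of $g_2(T)$: expanding $(f_2(s))(\hat G)$ against $e_k\otimes e_k$ produces the coefficient $\frac{\alpha_k^2}{2\lambda_k}$ times $\|\bar G'_c(0)e_k-B_c(I\otimes_s\mathcal{A}_s)^{-1}(e_k\otimes_s\widetilde G)\|^2_{\mathscr{L}_2}$. Here one uses $\widetilde G f_k=\alpha_k e_k$ from Assumption \ref{assu6} to identify the relevant components explicitly.

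To invoke Lemma \ref{l3} I must verify that $f_1,f_2$ are $\tilde\alpha$-H\"older continuous on $[0,\tau_1^\star]$ with the stated regularity, with H\"older norms that are $\bar{\mathcal{O}}(\varepsilon^{-3\kappa})$ (resp. $\bar{\mathcal{O}}(\varepsilon^{-6\kappa})$ in the $\varphi$-case). Since $f_1,f_2$ are continuous (in fact smooth) functions of $y_1$ built from the fixed bounded bilinear/linear maps $B_c$, $\mathcal{A}_s^{-1}$, $\bar G'_c(0)$, $\widetilde G$, their H\"older seminorms are controlled by $\|y_1\|_{C^{\tilde\alpha}}$ together with the uniform bound $\sup_{T\le\tau_1^\star}\|y_1(T)\|$; the $C^{1/3}$-regularity of $y_1$ follows from the mild-solution representation for \eqref{eq0130} exactly as the bound \eqref{eq090} was derived for $\varphi$, and gives $\|y_1\|_{C^{1/3}}=\bar{\mathcal{O}}(\varepsilon^{-3\kappa})$ using $\|\hat{\mathcal{Z}}\|_{C^{1/3}}=\bar{\mathcal{O}}(\varepsilon^{-\kappa})$. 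Taking $\tilde\alpha=\tfrac13$ and $\gamma=\tfrac15<\tfrac{2\tilde\alpha}{1+2\tilde\alpha}=\tfrac25$ in Lemma \ref{l3}, and using that the product of $\varepsilon^{\gamma p}$ with the $2p$-th moment of the H\"older norms yields $\varepsilon^{(\frac15-3\kappa)p}$ (resp. $\varepsilon^{(\frac15-6\kappa)p}$), gives the claimed order of the remainder. The main obstacle I anticipate is purely bookkeeping: correctly packaging the many scalar-product cross terms into the $f_1$, $f_2$ (and checking no genuinely cubic $f_3$ term appears, which is clear since the integrand of $g_1$ is at most quadratic in $\hat{\mathcal{Z}}$), and verifying in each case that the H\"older seminorm estimate on $f_i$ does not cost extra negative powers of $\varepsilon$ beyond those already accounted for.
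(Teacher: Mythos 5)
Your proposal is correct and follows essentially the same route as the paper: expand the Hilbert--Schmidt norm over the basis $\{f_i\}$, split the integrand into the $\hat{\mathcal{Z}}$-free part (giving the first integral of $g_2$), the linear-in-$\hat{\mathcal{Z}}$ cross terms, and the quadratic-in-$\hat{\mathcal{Z}}$ terms whose average against $\hat{G}$ produces the $\sum_k\frac{\alpha_k^2}{2\lambda_k}\|\cdots\|^2$ contribution, with all fluctuations controlled by Lemma \ref{l3} using $\tilde\alpha=\tfrac13$, $\gamma=\tfrac15$ and the H\"older bounds $\|y_1\|_{C^{1/3}}=\bar{\mathcal{O}}(\varepsilon^{-3\kappa})$, $\|\varphi\|_{C^{1/3}}=\bar{\mathcal{O}}(\varepsilon^{-6\kappa})$. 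Your write-up is in fact slightly more explicit than the paper's (which leaves the insertion of $\hat{G}$ and the H\"older verification implicit), but the argument is the same.
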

	\begin{proof}
		Note that there exists a positive constant $C$, such that $\|y_{1}\|_{C^{\frac{1}{3}}}\leq C\varepsilon^{-3\kappa}$.
		Recalling the definition of $\mathscr{L}_{2}(U,\mathcal{H}^{\alpha})$ and applying Lemma \ref{l3}, we can prove the first result:
		\begin{align*}
		&~~~~\int^{T}_{0}\|\bar{G}'_{c}(0)(y_{1}+\hat{\mathcal{Z}})-2B_{c}(y_{1},\mathcal{A}_{s}^{-1}\widetilde{G} )-B_{c}(I\otimes_{s}\mathcal{A}_{s})^{-1}(\hat{\mathcal{Z}}\otimes_{s}\widetilde{G})\|^{2}_{\mathscr{L}_{2}(U, \mathcal{H}^{\alpha})}\textup{d}s\\
		&=\sum^{\infty}_{i=1}\int^{T}_{0}\|\bar{G}'_{c}(0)(y_{1}+\hat{\mathcal{Z}})f_{i}-2B_{c}(y_{1},\mathcal{A}_{s}^{-1}\widetilde{G}f_{i} )-B_{c}(I\otimes_{s}\mathcal{A}_{s})^{-1}(\hat{\mathcal{Z}}\otimes_{s}\widetilde{G}f_{i})\|^{2}_{\mathcal{H}^{\alpha}}\textup{d}s\\
		&=\sum^{\infty}_{i=1}\int^{T}_{0}\|\bar{G}'_{c}(0)(y_{1})f_{i}-2B_{c}(y_{1},\mathcal{A}_{s}^{-1}\widetilde{G}f_{i})\|^{2}_{\mathcal{H}^{\alpha}}\textup{d}s\\
		&~~+2\sum^{\infty}_{i=1}\int^{T}_{0}\langle \bar{G}'_{c}(0)(y_{1})f_{i}-2B_{c}(y_{1},\mathcal{A}_{s}^{-1}\widetilde{G}f_{i}),  \bar{G}'_{c}(0)(\hat{\mathcal{Z}})f_{i}      \rangle_{\mathcal{H}^{\alpha}}\textup{d}s\\
		&~~+2\sum^{\infty}_{i=1}\int^{T}_{0}\langle \bar{G}'_{c}(0)(y_{1})f_{i}-2B_{c}(y_{1},\mathcal{A}_{s}^{-1}\widetilde{G}f_{i}),  B_{c}(I\otimes_{s}\mathcal{A}_{s})^{-1}(\hat{\mathcal{Z}}\otimes_{s}\widetilde{G}f_{j})     \rangle_{\mathcal{H}^{\alpha}}\textup{d}s\\
		&~~+\sum^{\infty}_{i=1}\int^{T}_{0}\|\bar{G}'_{c}(0)(\hat{\mathcal{Z}})f_{i}\|^{2}_{\mathcal{H}^{\alpha}}+     \|B_{c}(I\otimes_{s}\mathcal{A}_{s})^{-1}(\hat{\mathcal{Z}}\otimes_{s}\widetilde{G}f_{i})             \|^{2}_{\mathcal{H}^{\alpha}}\textup{d}s\\
		&~~-2\sum^{\infty}_{i=1}\int^{T}_{0}\langle \bar{G}'_{c}(0)(\hat{\mathcal{Z}})f_{i}, B_{c}(I\otimes_{s}\mathcal{A}_{s})^{-1}(\hat{\mathcal{Z}}\otimes_{s}\widetilde{G}f_{i})           \rangle_{\mathcal{H}^{\alpha}}\textup{d}s\\
		&=\int^{T}_{0}\|\bar{G}'_{c}(0)(y_{1})-2B_{c}(y_{1},\mathcal{A}_{s}^{-1}\widetilde{G})\|^{2}_{\mathscr{L}_{2}(U, \mathcal{H}^{\alpha})}\textup{d}s\\
		&~~+\sum^{N}_{k=n+1}\frac{\alpha_{k}^{2}}{2\lambda_{k}}\int^{T}_{0} \|\bar{G}'_{c}(0)e_{k}-B_{c}(I\otimes_{s}\mathcal{A}_{s})^{-1}(e_{k}\otimes_{s}\widetilde{G})\|^{2}_{\mathscr{L}_{2}(U,\mathcal{H}^{\alpha}) }    \textup{d}s+\bar{\mathcal{O}}(\varepsilon^{\frac{1}{5}-3\kappa}).
		\end{align*}
		
		The proof of the second one is similar to the above deduction, so it is omitted.
		
		We complete the proof.
	\end{proof}
	\begin{Remark}
		Although we can obtain better precise error between $g_{1}(T)$ and $g_{2}(T)$ by another averaging technique, $\bar{\mathcal{O}}(\varepsilon^{\frac{1}{5}-6\kappa})$ is enough for us in this paper.
	\end{Remark}
	
	Let operator $\Sigma: \mathcal{N}\rightarrow L(\mathcal{N}, \mathcal{N})$ be defined by
	\begin{align*}\langle\Sigma(\varphi_{1})\varphi_{2}, \varphi_{2}\rangle:&=\langle\sum^{\infty}_{j=1}[\bar{G}'_{c}(0)(\varphi_{1})-2B_{c}(\varphi_{1},\mathcal{A}_{s}^{-1}\widetilde{G})]f_{j},\varphi_{2}\rangle^{2}\\
	&~~+\sum^{N}_{i=n+1}\frac{\alpha_{i}^{2}}{2\lambda_{i}}\langle\sum^{\infty}_{j=1}[\bar{G}'_{c}(0)e_{i}-B_{c}(I\otimes_{s}\mathcal{A}_{s})^{-1}(e_{i}\otimes_{s}\widetilde{G})]f_{j},\varphi_{2}\rangle^{2},
	\end{align*}
	where $\varphi_{1}, \varphi_{2}\in \mathcal{N}$.
	
	According to operator $\Sigma$, we present amplitude equation:
	\begin{equation}\label{eq83}
	\begin{split}
	\textup{d}y&=[\bar{\mathcal{L}}y+2\mathcal{F}(y)]\textup{d}T+\Sigma^{\frac{1}{2}}(y)\textup{d}W_{\mathcal{N}},\\
	y(0)&=a(0),
	\end{split}
	\end{equation}
	where $W_{\mathcal{N}}$ is $\mathcal{N}$-valued Wiener process.
	
	Denote the law of $(\varphi(T), \psi(T))$ stopped at
	$\tau^{\star}_{1}$ by $\mathbb{P}_{\varepsilon}$, and the projection $:C([0,T_{0}],\mathcal{H}^{\alpha})\rightarrow C([0,T_{0}],\mathcal{N})$ by $\pi$.
	\begin{Theorem}\label{theom}
		Let Assumption \ref{assu1}, \ref{assu2}-\ref{assu6} hold.
		If $\|\varphi(0)\|_{\alpha}\leq C$, the sequence of measures $\pi^{\star}\mathbb{P}_{\varepsilon}$ converges weakly to the measure $\mathbb{P}$, the law of $y(T)$.
		
	\end{Theorem}
	\begin{proof}
		The proof of Theorem \ref{theom} is divided into two steps.
		
		{\bf Step 1. Tightness of the sequence of the probability measures $\pi^{\star}\mathbb{P}_{\varepsilon}$.}
		
		Set $T^{\star}:=T\wedge\tau^{\star}$ and introduce $\varphi_{R}(T):=\varphi(T^{\star})-R_{\varphi}(T^{\star})$.\\
		We derive that
		\begin{align*}
		\varphi_{R}(T)&=\varphi_{R}(0)+\int^{T}_{0}\bar{\mathcal{L}}\varphi\textup{d}s+2\int^{T}_{0}\mathcal{F}(\varphi)\textup{d}s+\int^{T}_{0}\Gamma(\varphi, \hat{\mathcal{Z}})\textup{d}\widetilde{W}.
		\end{align*}
		According to It$\hat{\textup{o}}$ Lemma, we obtain that
		\begin{align*}
		\mathbb{E}\|\varphi_{R}(T)\|^{4}&\leq \|\varphi(0)\|^{4}+C\mathbb{E}\big(\int^{T^{\star}}_{0}\|\varphi_{R}(s)\|^{4}+\|R_{\varphi}(s)\|^{4}+\|R_{\varphi}(s)\|^{8}\textup{d}s\big)\\
		&~~+C\mathbb{E}\big(\int^{T^{\star}}_{0}\|\varphi_{R}(s)\|^{2}(\|\varphi_{R}(s)\|^{2}+\|R_{\varphi}(s)\|^{2}+\|\hat{\mathcal{Z}}(s)\|^{2}_{\alpha})\textup{d}s\big)\\
		&\leq \|\varphi(0)\|^{4}+C+\int^{T}_{0}\mathbb{E}\|\varphi_{R}(s)\|^{4}\textup{d}s,
		\end{align*}
		where the last inequality is a consequence of Assumption \ref{assu3}, Lemma \ref{lx} and the fact that $\mathbb{E}(\|\hat{\mathcal{Z}}(T)\|_{\alpha}^{p})$ is uniformly bounded in $[0,T_{0}]$.
		Thus, by Gronwall's Lemma, there exists a positive constant $C_{1}$, such that
		\begin{align}\label{eq101}
		\sup_{0\leq T \leq T_{0}}\mathbb{E}\|\varphi_{R}(T)\|^{4}\leq C_{1}.
		\end{align}
		where $C_{1}$ is a positive constant.
		It follows from (\ref{eq101}) that
		there exists a positive constant $C_{2}$, such that
		\begin{align}\label{eq103}
		\mathbb{E}\|\varphi_{R}(T_{1})-\varphi_{R}(T_{2})\|^{4}\leq C(T_{1}-T_{2})^{2}, \forall T_{1}, T_{2}\in[0,T_{0}].
		\end{align}
		By (\ref{eq82}) and (\ref{eq103}), we conclude 	$\mathbb{E}\|\varphi(T^{\star}_{1})-\varphi(T^{\star}_{2})\|^{4}\leq C_{3}(T_{1}-T_{2})^{2}$, with  a positive constant $C_{3}$.
		Then, we achieve this step by Kolmogorov's criterion for weak compactness \cite{Re}.

		{\bf Step 2. Every accumulation point of $\pi^{\star}\mathbb{P}_{\varepsilon}$ is a solution to the martingale problem associated with (\ref{eq83}).}
		
		Denote smooth and compactly supported function defined in $\mathcal{N}$ by $C^{\infty}_{0}(\mathcal{N})$.
		Considering $\varTheta(\varphi_{R}(T))$ with $\varTheta \in C^{\infty}_{0}(\mathcal{N})$, by It$\hat{\textup{o}}$ formula, we derive
		\begin{equation}\label{eq100}
		\begin{split}
		\varTheta(\varphi_{R}(T))-\varTheta(a(0))&=M^{\varTheta}(T)+\int^{T^{\star}}_{0}\langle D\varTheta(\varphi_{R}),\bar{\mathcal{L}}\varphi+2\mathcal{F}(\varphi)\rangle\textup{d}s\\
		&\quad+\frac{1}{2}\int^{T^{\star}}_{0}\sum^{\infty}_{j=1}\big(D^{2}\varTheta(\varphi_{R})\big)\big(\Gamma(\varphi,\hat{\mathcal{Z}})f{j},\Gamma(\varphi,\hat{\mathcal{Z}})f_{j}\big)\textup{d}s \quad a.s.,
		\end{split}
		\end{equation}
		where $M^{\varTheta}_{\varepsilon}(T)$ is a martingale with respect to $\pi^{\star}\mathbb{P}_{\varepsilon}$.
		
		Due to Lemma \ref{lx}, Lemma \ref{lemg1} and smooth enough function $\varTheta$, we deduce that the third term in the right hand of (\ref{eq100}) converges to
		\begin{equation*}
		\frac{1}{2}\int^{T^{\star}}_{0}\textup{Tr}[\big(D^{2}\varTheta(\varphi)\big)\Sigma(\varphi)]\textup{d}s,
		\end{equation*}
		as $\varepsilon$ tends to $0$.
		Moreover, recalling Lemma \ref{l07} and Lemma \ref{boundb1}, we can easily obtain
		\begin{align*}
		\mathbb{P}(\tau^{\star}=T_{0})\geq 1-\varepsilon^{p}, \text{for}~p>1,
		\end{align*}
		which implies
		\begin{equation}\label{eqtaut}
		\lim_{\varepsilon\rightarrow 0}\mathbb{P}(\tau^{\star}=T_{0})=1.
		\end{equation}
		Then, replacing $T^{\star}$ with $T$, we derive that
		\begin{equation*}
		\begin{split}
		\varTheta(\varphi(T))-\varTheta(\varphi(0))&=\hat{M}^{\varTheta}_{\varepsilon}(T)+\int^{T}_{0}\langle D\varTheta(\varphi),\bar{\mathcal{L}}\varphi+2\mathcal{F}(\varphi)\rangle\textup{d}s\\
		&\quad+\frac{1}{2}\int^{T}_{0}\textup{Tr}[\big(D^{2}\varTheta(\varphi)\big)\Sigma(\varphi)]\textup{d}s+\hat{R}_{\varepsilon}(T)\quad a.s.,
		\end{split}
		\end{equation*}
		where $\hat{M}^{\varTheta}_{\varepsilon}(T)$ is a martingale with respect to $\pi^{\star}\mathbb{P}_{\varepsilon}$ and $\hat{R}_{\varepsilon}(T)$ is an error term.
		Since $\hat{M}^{\varTheta}_{\varepsilon}(T)$ dependent of $\varphi(T)$ stops at $\tau^{\star}$ and $\varTheta$ is smooth enough, we own that
		\begin{equation}\label{eq092}	\lim_{\varepsilon\rightarrow0}{\mathbb{E}_{\varepsilon}\Big(\sup_{0\leq T\leq T_{0}}|\hat{R}_{\varepsilon}(T)|\Big)}=0.
		\end{equation}
		
		Define the continuous function $\bar{M}^{\varTheta}:\mathcal{C}([0,T_{0}],\mathcal{N})\rightarrow \mathcal{C}([0,T_{0}],\mathbb{R})$ by
		\begin{equation*}
		\big(\bar{M}^{\varTheta}(\varphi)\big)(T)=\varTheta(\varphi(T))-\varTheta(\varphi(0))-\int_{0}^{T}
		\big(\mathcal{L}\varTheta\big)(\varphi)\textup{d}s,
		\end{equation*}
		where $\mathcal{L}$ is the infinitesimal generator of (\ref{eq83}).\\
		We proceed to prove that $\bar{M}^{\varTheta}$ is $\mathbb{P}$-martingale where $\mathbb{P}$ is an arbitrary accumulation point of $\pi^{\star}\mathbb{P}_{\varepsilon}$. Let $\pi^{\star}\mathbb{P}_{\varepsilon_{n}}$
		be a sequence of $\pi^{\star}\mathbb{P}_{\varepsilon}$ such that $\pi^{\star}\mathbb{P}_{\varepsilon_{n}}$ weakly converges to $\hat{\mathbb{P}}$
		as $n\rightarrow \infty$.\\
		Then, for any continuous function $f:\mathcal{C}([0,T_{0}],\mathcal{N})\rightarrow\mathbb{R}$ and $T\in[0,T_{0}]$, we obtain
		\begin{equation}\label{eqmt}
		\begin{split}
		&~~\hat{\mathbb{E}}\big(\bar{M}^{\varTheta}(\varphi)\big)(T)f(\varphi)\\
		&=\lim_{n\rightarrow 0}\mathbb{E}_{\varepsilon_{n}}\big(\bar{M}^{\varTheta}(\varphi)\big)(T)f(\varphi)\\
		&=\lim_{n\rightarrow 0}\mathbb{E}_{\varepsilon_{n}}\big(\bar{M}^{\varTheta}(\varphi)-\hat{R}_{\varepsilon_{n}}\big)(T)f(\varphi),
		\end{split}
		\end{equation}
		where the first equality holds since $\bar{M}^{\varTheta}$ and $f$ are continuous functions, and the last equality is evident from (\ref{eq092}).\\
		Note that $\hat{M}^{\varTheta}_{\varepsilon_{n}}(T)=\big(\bar{M}^{\varTheta}(\varphi)-\hat{R}_{\varepsilon_{n}}\big)(T)$ is a martingale with respect to $\pi^{\star}\mathbb{P}_{\varepsilon_{n}}$.\\
		Then, for any $0\leq T_{a}< T_{b}\leq T_{0}$, we get
		\begin{align}\label{eq093}
		\lim_{n\rightarrow 0}\mathbb{E}_{\varepsilon_{n}}\big(\bar{M}^{\varTheta}(\varphi)-\hat{R}_{\varepsilon_{n}}\big)(T_{a})f(\varphi)
		=\lim_{n\rightarrow 0}\mathbb{E}_{\varepsilon_{n}}\big(\bar{M}^{\varTheta}(\varphi)-\hat{R}_{\varepsilon_{n}}\big)(T_{b})f(\varphi),
		\end{align}
		which and (\ref{eqmt}) imply that
		\begin{align*}
		\hat{\mathbb{E}}\big(\bar{M}^{\varTheta}(\varphi)\big)(T_{a})f(\varphi)=\hat{\mathbb{E}}\big(\bar{M}^{\varTheta}(\varphi)\big)(T_{b})f(\varphi).
		\end{align*}
		Thus, $\hat{\mathbb{P}}$ is a solution to the martingale problem associated with (\ref{eq83}). In addition, since there is a unique solution of (\ref{eq83}) due to \cite{St}, we obtain $\hat{\mathbb{P}}=\mathbb{P}$.
		
		We complete the proof.
		
	\end{proof}
	
	Based on Theorem \ref{theom}, we know $\varphi(t)\approx y(t)$ in the law. Then, by Lemma \ref{l02} and Lemma \ref{l30} we claim
	\begin{align*}
	u(t)\approx \varepsilon y(\varepsilon^{2} t)+\varepsilon Q(\varepsilon ^{2}t)+\varepsilon\mathcal{Z}(\varepsilon^{2}t).
	\end{align*}
	
	\subsection{Amplitude equation for one dimensional kernel space}
	In this subsection, our aim is to investigate the case that $\dim \mathcal{N}=1$. Compared with multi-dimensional kernel space, we can apply martingale representation theorem to one construct amplitude equation. With this convenience, we can show explicit convergence rate of the error.
	
	As the amplitude equation is just one dimensional SDE in this subsection, we introduce some notations for convenience of understanding.
	
	Set:
	\begin{align}
	\tilde{y}_{1}(T)&:=\langle y_{1}(T),e_{1}\rangle,  \notag \\
	\sigma_{1}&:=\langle\bar{\mathcal{L}}e_{1},e_{1}\rangle, \sigma_{2}:=2\langle\mathcal{F}(e_{1}),e_{1}\rangle,
	\sigma_{3}=\|\bar{G}'_{c}(0)e_{1}-2B_{c}(e_{1},\mathcal{A}_{s}^{-1}\widetilde{G})\|_{\mathscr{L}_{2}(U,\mathcal{H})}^{2},\notag \\
	\sigma_{4}&:=\sum^{N}_{k=2}\frac{\alpha_{k}^{2}}{2\lambda_{k}} \|\bar{G}'_{c}(0)e_{k}-B_{c}(I\otimes_{s}\mathcal{A}_{s})^{-1}(e_{k}\otimes_{s}\widetilde{G})\|^{2}_{\mathscr{L}_{2}(U,\mathcal{H}) }, \notag \\
	M_{1}(T)&:=\langle\int^{T}_{0}\Gamma(y_{1},\hat{\mathcal{Z}})\textup{d}\tilde{W}(s),e_{1}\rangle.\label{eq28}
	\end{align}
	
	Obviously, $M_{1}$ is a real-valued martingale with the quadratic variation $g_{1}$ given in Lemma \ref{lemg1}. Now we further remove $\hat{\mathcal{Z}}$ from $M_{1}$ by averaging $\hat{\mathcal{Z}}$ in $g_{1}$.
	\begin{lemma}\label{l20}\textup{\cite{Bl1}}
		Let $\bar{M}_{1}(T)$ be a continuous martingale with respect to filtration $(\mathscr{F}_{T})_{T\geq0}$. Denote the quadratic variation of $\bar{M}_{1}(T)$ by $\bar{g}_{1}(T)$ and let $\bar{g}_{2}(T)$ be an arbitrary $\mathscr{F}_{T}$-adapted increasing process with
		$\bar{g}_{2}(0)=0$. Then, there exists a filtration $\tilde{\mathscr{F}}_{T}$ with $\mathscr{F}_{T}\subset\tilde{\mathscr{F}}_{T}$ and a continuous $\tilde{\mathscr{F}}_{T}$ martingale $M_{2}(T)$ with quadratic
		variation $\bar{g}_{2}(T)$ such that, for every $r_{0}<\frac{1}{2}$, there exists a positive constant $C$ with
		\begin{equation*}
		\begin{split}
		\mathbb{E}\sup_{0\leq T\leq T_{0}}\Big|\bar{M}_{1}(T)-\bar{M}_{2}(T)\Big|^{p}&\leq C (\mathbb{E}|\bar{g}_{2}(T_{0})|^{2p})^{\frac{1}{4}}\Big(\mathbb{E}\sup_{0\leq T\leq T_{0}}|\bar{g}_{1}(T)-\bar{g}_{2}(T)|^{p}\Big)^{r_{0}}\\
		&\quad+C\mathbb{E}\sup_{0\leq T\leq T_{0}}|\bar{g}_{1}(T)-
		\bar{g}_{2}(T)|^{\frac{p}{2}}.
		\end{split}
		\end{equation*}
	\end{lemma}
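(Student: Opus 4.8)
The plan is to realise $\bar M_{2}$ as a deterministic time--change of the Brownian motion that already drives $\bar M_{1}$, and then to compare the two via a modulus--of--continuity estimate for Brownian motion. First, since $\bar M_{1}$ is a continuous $(\mathscr{F}_{T})$--martingale with quadratic variation $\bar g_{1}$, the Dambis--Dubins--Schwarz theorem (after enlarging the probability space with an independent Brownian motion to cover the degenerate cases where $\bar g_{1}$ is not strictly increasing or does not tend to infinity as $T\to\infty$) provides a standard real Brownian motion $W$ with $\bar M_{1}(T)=W(\bar g_{1}(T))$ for every $T$. We then \emph{define} $\bar M_{2}(T):=W(\bar g_{2}(T))$ and take $\tilde{\mathscr{F}}_{T}$ to be the usual augmentation of the smallest filtration containing $\mathscr{F}_{T}$ for which $W$ is still a Brownian motion; because $\bar g_{2}$ is $\mathscr{F}_{T}$--adapted, continuous, increasing and $\bar g_{2}(0)=0$, the standard time--change calculus shows that $\bar M_{2}$ is a continuous $\tilde{\mathscr{F}}_{T}$--martingale with $\langle\bar M_{2}\rangle_{T}=\bar g_{2}(T)$, as required.

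Second, to estimate the difference we use $\bar M_{1}(T)-\bar M_{2}(T)=W(\bar g_{1}(T))-W(\bar g_{2}(T))$ together with a quantitative modulus--of--continuity bound for $W$ on the random interval $[0,\Lambda]$, $\Lambda:=\bar g_{1}(T_{0})\vee\bar g_{2}(T_{0})$: for any $\gamma<\tfrac12$,
\begin{equation*}
\sup_{0\leq T\leq T_{0}}\big|W(\bar g_{1}(T))-W(\bar g_{2}(T))\big|\leq \|W\|_{C^{\gamma}([0,\Lambda])}\Big(\sup_{0\leq T\leq T_{0}}|\bar g_{1}(T)-\bar g_{2}(T)|\Big)^{\gamma},
\end{equation*}
where, by Brownian scaling and a Garsia--Rodemich--Rumsey argument, $\|W\|_{C^{\gamma}([0,\Lambda])}$ has all moments controlled by moments of $\Lambda$, hence — using $\Lambda\leq 2\bar g_{2}(T_{0})+\sup_{T}|\bar g_{1}-\bar g_{2}|$ — by moments of $\bar g_{2}(T_{0})$ and of $\sup_{T}|\bar g_{1}-\bar g_{2}|$. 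Taking $p$--th moments, applying H\"older's inequality to separate the Brownian seminorm from $\sup_{T}|\bar g_{1}-\bar g_{2}|$, and choosing the exponents so that the factor $\sup_{T}|\bar g_{1}-\bar g_{2}|$ carries the full power $p$, forces its outer exponent to be exactly $r_{0}=\gamma<\tfrac12$; a further Cauchy--Schwarz step then collapses the complementary factor to $(\mathbb{E}|\bar g_{2}(T_{0})|^{2p})^{1/4}$ up to lower--order contributions, while the near--diagonal regime, in which the modulus of continuity gives the optimal rate $|\bar g_{1}-\bar g_{2}|^{1/2}$ with no logarithmic defect, supplies the additive term $\mathbb{E}\sup_{T}|\bar g_{1}-\bar g_{2}|^{p/2}$. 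Collecting these contributions yields the asserted inequality.

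The main obstacle is the filtration bookkeeping of the first step: one must enlarge $(\mathscr{F}_{T})$ by just enough independent randomness that $W$ is a genuine Brownian motion and that each $\bar g_{2}(T)$ behaves like a finite stopping time for it, while making sure the enlargement does not anticipate $W$; the degenerate situations (flat stretches of $\bar g_{1}$ or $\bar g_{2}$, or $\bar g_{1}(T)\not\to\infty$) are precisely what make the splicing of an auxiliary independent Brownian motion necessary. Once the representation $\bar M_{1}=W\circ\bar g_{1}$ is in place, the second step is essentially routine; the only delicate point is to tune the H\"older and Cauchy--Schwarz exponents so that the powers land exactly as stated, all integrability being automatic since $\bar g_{1},\bar g_{2}$ are continuous and of finite variation on $[0,T_{0}]$. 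The full argument is carried out in \cite{Bl1}.
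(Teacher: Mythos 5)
The paper gives no proof of this lemma: it is imported verbatim from \cite{Bl1}, so there is no in-text argument to compare yours against. Your overall plan --- represent $\bar{M}_{1}=W\circ\bar{g}_{1}$ via Dambis--Dubins--Schwarz on an enlarged space, define $\bar{M}_{2}:=W\circ\bar{g}_{2}$, and control $\sup_{T}|W(\bar{g}_{1}(T))-W(\bar{g}_{2}(T))|$ through the regularity of $W$ --- is indeed the strategy of the cited source, so the skeleton is the right one.

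As a proof, however, the quantitative half is asserted rather than executed, and two of the supporting claims are not correct as stated. First, the moments of $\|W\|_{C^{\gamma}([0,\Lambda])}$ cannot be read off from ``Brownian scaling'' because $\Lambda=\bar{g}_{1}(T_{0})\vee\bar{g}_{2}(T_{0})$ is random and correlated with $W$; one must localize (split on $\{\Lambda\leq K\}$ with a tail estimate, or run Garsia--Rodemich--Rumsey over a deterministic horizon) before any scaling is available, and it is exactly this localization that produces the factor $(\mathbb{E}|\bar{g}_{2}(T_{0})|^{2p})^{1/4}$ and the restriction $r_{0}<\tfrac{1}{2}$ --- none of which you derive. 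Second, the claim that the ``near-diagonal regime'' yields the rate $|\bar{g}_{1}-\bar{g}_{2}|^{1/2}$ ``with no logarithmic defect'' is false for the uniform modulus of continuity of Brownian motion (L\'evy's modulus carries a $\sqrt{\log(1/\delta)}$ correction); the additive term $\mathbb{E}\sup_{T}|\bar{g}_{1}-\bar{g}_{2}|^{p/2}$ has to be obtained differently, e.g.\ from a Burkholder--Davis--Gundy bound for $t\mapsto W(t\wedge\bar{g}_{1}(T))-W(t\wedge\bar{g}_{2}(T))$ or by optimizing a splitting over a deterministic threshold $\delta$. Finally, for $W\circ\bar{g}_{2}$ to be a martingale you need each $\bar{g}_{2}(T)$ to be a stopping time for the time-changed filtration of $W$, which does not follow automatically from $\mathscr{F}_{T}$-adaptedness of $\bar{g}_{2}$ and deserves an explicit argument. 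So the approach is sound, but the core estimate --- precisely the content that Lemma \ref{lemb} relies on --- is missing from your write-up and is only supplied by the reference you defer to.
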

	\begin{lemma}\label{lemb}
		Suppose Assumptions \ref{assu1}-\ref{assu6} hold. Let $M_{1}(T)$ be given in (\ref{eq28}). Then,
		for $p>1$ and $\|\varphi(0)\|\leq \varepsilon^{-\frac{\kappa}{3}}$, there exists a continuous
		$\tilde{\mathscr{F}}_{T}$ martingale $M_{2}(T)$ with the quadratic variation $g_{2}(T)$ and a positive constant $C$, such that
		\begin{equation}\label{eq33}
		\mathbb{E}\Big(\sup_{0\leq T \leq T_{0}}|M_{1}(T)-M_{2}(T)|^{p}\Big)\leq C \varepsilon^{\frac{p}{15}-\frac{4\kappa p}{3}}.
		\end{equation}
		Moreover, there exists a Brownian motion $B(T)$ with respect to the filtration $\tilde{\mathscr{F}}_{T}$, such that
		\begin{equation}\label{eq34}
		M_{2}(T)=\int^{T}_{0}(\sigma_{3}\tilde{y}^{2}_{1}+\sigma_{4})^{\frac{1}{2}}\textup{d}B.
		\end{equation}
	\end{lemma}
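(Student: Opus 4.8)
The plan is to prove (\ref{eq33}) by applying Lemma \ref{l20} with $\bar{M}_{1}=M_{1}$, $\bar{g}_{1}=g_{1}$ and $\bar{g}_{2}=g_{2}$, and then to identify the resulting martingale $M_{2}$ with quadratic variation $g_{2}$ as a stochastic integral against a Brownian motion via L\'evy's characterization / a martingale representation argument. First I would recall from (\ref{eq28}) that $M_{1}(T)=\langle\int_{0}^{T}\Gamma(y_{1},\hat{\mathcal{Z}})\textup{d}\widetilde{W},e_{1}\rangle$ is a continuous real-valued $\mathscr{F}_{T}$-martingale whose quadratic variation is precisely $g_{1}(T)=\int_{0}^{T}\|\Gamma(y_{1},\hat{\mathcal{Z}})\|_{\mathscr{L}_{2}(U,\mathcal{H}^{\alpha})}^{2}\textup{d}s$ restricted to the $e_{1}$-component, i.e. $g_{1}$ as defined just before Lemma \ref{lemg1}; this is the content of the sentence ``$M_{1}$ is a real-valued martingale with quadratic variation $g_{1}$''. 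Next, $g_{2}$ is an $\mathscr{F}_{T}$-adapted, continuous, increasing process with $g_{2}(0)=0$ (it is a time-integral of nonnegative integrands depending on $y_{1}$), so Lemma \ref{l20} applies and yields, on a possibly enlarged filtration $\tilde{\mathscr{F}}_{T}$, a continuous martingale $M_{2}$ with quadratic variation $g_{2}$ satisfying the quantitative bound
\begin{equation*}
\mathbb{E}\sup_{0\leq T\leq T_{0}}|M_{1}(T)-M_{2}(T)|^{p}\leq C\big(\mathbb{E}|g_{2}(T_{0})|^{2p}\big)^{\frac14}\big(\mathbb{E}\sup_{0\leq T\leq T_{0}}|g_{1}(T)-g_{2}(T)|^{p}\big)^{r_{0}}+C\mathbb{E}\sup_{0\leq T\leq T_{0}}|g_{1}(T)-g_{2}(T)|^{\frac{p}{2}}.
\end{equation*}

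The main estimate then is to feed in the two inputs on the right. For the first factor, $g_{2}(T_{0})$ is bounded in every $L^{p}$: using Assumption \ref{assu6} (the linear growth of $\bar{G}'_{c}(0)$), boundedness of $B_{c}$ and $\mathcal{A}_{s}^{-1}$ (Assumptions \ref{assu3}, \ref{assu2} plus equivalence of norms on $\mathcal{N}$), together with the bound $\mathbb{E}\sup_{0\leq T\leq T_{0}}\|y_{1}(T)\|^{p}\leq C$ from (\ref{boundb1z}) (which holds with a uniform constant since $\mathbb{E}\sup\|\hat{\mathcal{Z}}\|_{\alpha}^{2p}$ is uniformly bounded in $[0,T_{0}]$), we get $\mathbb{E}|g_{2}(T_{0})|^{2p}\leq C$. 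For the second factor, Lemma \ref{lemg1} already gives $g_{1}(T)-g_{2}(T)=\bar{\mathcal{O}}(\varepsilon^{\frac15-3\kappa})$ with respect to $\tau^{\star}_{1}$, hence $\mathbb{E}\sup_{0\leq T\leq\tau^{\star}_{1}}|g_{1}-g_{2}|^{q}\leq C\varepsilon^{(\frac15-3\kappa)q}$ for all $q\ge1$. Combining: the first term on the right is $\leq C\cdot 1\cdot(\varepsilon^{(\frac15-3\kappa)p})^{r_{0}}$ and the second is $\leq C\varepsilon^{(\frac15-3\kappa)p/2}$. Choosing $r_{0}$ close to $\tfrac12$ — concretely $r_{0}=\tfrac13$ suffices, noting $\kappa\in(0,\tfrac1{13})$ so $\tfrac15-3\kappa>0$ — the dominating contribution is of order $\varepsilon^{(\frac15-3\kappa)/3\cdot p}=\varepsilon^{p/15-\kappa p}$, which is $\geq \varepsilon^{p/15-\frac{4\kappa}{3}p}$ in the relevant regime; so (\ref{eq33}) follows after relabeling the constant. (The precise exponent $\frac{p}{15}-\frac{4\kappa p}{3}$ in the statement is obtained by tracking the loss from working up to $\tau^{\star}_{1}$ versus $\tau^{\star}$ and from the Chebyshev-type passage between the two stopping times; I would just carry the bookkeeping so the final power matches.)

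For the representation (\ref{eq34}): $M_{2}$ is a continuous $\tilde{\mathscr{F}}_{T}$-martingale with quadratic variation $g_{2}(T)=\int_{0}^{T}(\sigma_{3}\tilde{y}_{1}^{2}(s)+\sigma_{4})\textup{d}s$. Indeed, reading off the definition of $g_{2}$ from before Lemma \ref{lemg1} and pairing with $e_{1}$, the first integrand equals $\|\bar{G}'_{c}(0)(y_{1})-2B_{c}(y_{1},\mathcal{A}_{s}^{-1}\widetilde{G})\|^{2}_{\mathscr{L}_{2}(U,\mathcal{H}^{\alpha})}=\sigma_{3}\tilde{y}_{1}^{2}$ by bilinearity (since $y_{1}=\tilde{y}_{1}e_{1}$ on the one-dimensional $\mathcal{N}$), and the second is exactly $\sigma_{4}$ by the definition of $\sigma_{4}$. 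Since the integrand $\sigma_{3}\tilde{y}_{1}^{2}+\sigma_{4}$ is strictly positive (or can be made so; if $\sigma_{3}=\sigma_{4}=0$ the statement is trivial), a standard time-change / martingale representation argument — enlarging the probability space if necessary — produces a Brownian motion $B(T)$ with respect to $\tilde{\mathscr{F}}_{T}$ with $M_{2}(T)=\int_{0}^{T}(\sigma_{3}\tilde{y}_{1}^{2}+\sigma_{4})^{\frac12}\textup{d}B$; this is exactly the setting of the representation theorem (e.g. Theorem 8.2 in \cite{Da} type results). I expect the main obstacle to be purely the bookkeeping in the first part — verifying the two moment inputs to Lemma \ref{l20} with the correct $\varepsilon$-powers and reconciling the stopping times $\tau^{\star}_{1}$ and $T_{0}$ — rather than anything conceptually deep, since Lemmas \ref{l20} and \ref{lemg1} do the heavy lifting.
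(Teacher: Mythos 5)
Your overall strategy coincides with the paper's: apply Lemma \ref{l20} with $\bar M_1=M_1$, $\bar g_1=g_1$, $\bar g_2=g_2$, take $r_0=\tfrac13$, feed in Lemma \ref{lemg1} for $\mathbb{E}\sup|g_1-g_2|^p$, and then invoke the martingale representation theorem for (\ref{eq34}). The one step that does not hold up is your claim that $\mathbb{E}|g_2(T_0)|^{2p}\le C$ uniformly in $\varepsilon$. Under the stated hypothesis $\|\varphi(0)\|\le\varepsilon^{-\kappa/3}$, the bound (\ref{boundb1z}) only gives $\mathbb{E}\sup_{0\le T\le T_0}\|y_1(T)\|^{4p}\le C\varepsilon^{-4\kappa p/3}$ (the initial datum is allowed to grow), so the correct estimate is the paper's (\ref{eq31}) in the form $\mathbb{E}|g_2(T_0)|^{2p}\le C\,\mathbb{E}\sup\|\tilde y_1\|^{4p}+C\le C\varepsilon^{-4\kappa p/3}$. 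Its fourth root contributes a factor $\varepsilon^{-\kappa p/3}$, which multiplied by $\bigl(\varepsilon^{(\frac15-3\kappa)p}\bigr)^{1/3}=\varepsilon^{\frac{p}{15}-\kappa p}$ yields precisely $\varepsilon^{\frac{p}{15}-\frac{4\kappa p}{3}}$; the second term of Lemma \ref{l20}, of order $\varepsilon^{\frac{p}{10}-\frac{3\kappa p}{2}}$, is smaller since $\kappa<\tfrac1{13}$. So the extra $-\tfrac{\kappa p}{3}$ in the stated exponent comes from the non-uniform moment of $g_2$, not from the $\tau^\star_1$-versus-$T_0$ bookkeeping you speculate about. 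Since you only need the weaker exponent anyway, this is a repairable misstep rather than a fatal gap, but as written the justification of the first factor is wrong and the attribution of where the exponent $\frac{p}{15}-\frac{4\kappa p}{3}$ comes from is incorrect. The identification of $g_2(T)=\int_0^T(\sigma_3\tilde y_1^2+\sigma_4)\,\textup{d}s$ on the one-dimensional kernel and the representation of $M_2$ as a stochastic integral against a Brownian motion match the paper's argument.
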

	\begin{proof}
		Lemma \ref{lemg1} leads to
		\begin{equation}\label{eq31}
		\begin{split}
		\mathbb{E}(\sup_{0\leq T\leq T_{0}}|g_{2}(T)|^{p})\leq C \mathbb{E}(\sup_{0\leq T\leq T_{0}}\|\tilde{y}_{1}(T)\|^{2p})+C\leq C\varepsilon^{-\frac{2\kappa p}{3}}.
		\end{split}
		\end{equation}
		Choosing $r_{0}=\frac{1}{3}$, it is easy to show (\ref{eq33}) by (\ref{eq31}), Lemma \ref{lem13} and Lemma \ref{l20}.
		In view of martingale representation theorem, we obtain (\ref{eq34}).
	\end{proof}
	
	Our next task is to provide amplitude equation and complete the approximation result. \\
	Firstly, we introduce some equations on stochastic basis $(\Omega, \tilde{\mathcal{F}}_{T},\mathbb{P})$:
	\begin{align}
	\textup{d}\tilde{y}_{1}&=\sigma_{1}\tilde{y}_{1}\textup{d}T
	+\sigma_{2}\tilde{y}_{1}^{3}\textup{d}T+\textup{d}M_{1}(T),~~\tilde{y}_{1}(0)=\tilde{\varphi}(0),\label{eq35}\\
	\textup{d}\tilde{y}_{2}&=\sigma_{1}\tilde{y}_{2}\textup{d}T+\sigma_{2}\tilde{y}_{2}^{3}\textup{d}T+\textup{d}M_{2}(T),~~\tilde{y}_{2}(0)=\tilde{\varphi}(0), \label{eq02}\\
	\textup{d}\tilde{y}_{3}&=\sigma_{1}\tilde{y}_{3}\textup{d}T+\sigma_{2}\tilde{y}_{3}^{3}\textup{d}T+(\sigma_{3}\tilde{y}_{3}^{2}+\sigma_{4})^{\frac{1}{2}}\textup{d}B(T),~~\tilde{y}_{3}(0)=\tilde{\varphi}(0),\label{eq03}
	\end{align}
	where $\tilde{\varphi}(0):=\langle \varphi(0), e_{1}\rangle$ and $B(T)$ is the Brownian motion given in (\ref{eq34}). We note that (\ref{eq03}) is the amplitude equation one desire.
	\begin{lemma}\label{l05}
		Suppose Assumption \ref{assu1}-\ref{assu6} hold. Then, for $p>1$, there exists a positive constant $C$, such that
		\begin{align}
		&\mathbb{E}\Big(\sup_{0\leq T\leq T_{0}}|\tilde{y}_{2}(T)|^{p}\Big)\leq C |\tilde{\varphi}(0)|^{p}+C\Big(\mathbb{E}\big(\sup_{0\leq T\leq T_{0}}\|\hat{\mathcal{Z}}(T)\|^{2p}_{\alpha}\big)\Big)^{\frac{1}{2}}+C.\label{eq05}
		\intertext{Moreover, if $|\tilde{a}(0)|\leq \varepsilon^{-\frac{\kappa}{3}}$,}
		&\mathbb{E}\Big(\sup_{0\leq T\leq T_{0}}|\tilde{y}_{1}(T)-\tilde{y}_{2}(T)|^{p}\Big)\leq C\varepsilon^{\frac{p}{15}-\frac{8\kappa p}{3}}.\label{eq06}
		\end{align}
	\end{lemma}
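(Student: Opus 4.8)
The claim has two parts: an a priori moment bound \eqref{eq05} on $\tilde{y}_{2}$, and an error estimate \eqref{eq06} between $\tilde{y}_{1}$ and $\tilde{y}_{2}$. For \eqref{eq05}, the plan is to imitate the argument of Lemma \ref{boundb1} verbatim in the scalar setting: introduce the localizing stopping time $\tau_{K}:=\inf\{T>0:|\tilde{y}_{2}(T)|>K\}$, apply It\^{o}'s formula to $|\tilde{y}_{2}(T)|^{p}$, and use that the cubic term $\sigma_{2}\tilde{y}_{2}^{3}$ contributes $p\,\sigma_{2}|\tilde{y}_{2}|^{p-2}\tilde{y}_{2}^{4}$ which has a definite sign (recall that $\sigma_{2}=2\langle\mathcal{F}(e_{1}),e_{1}\rangle<0$ is exactly the dissipativity that Assumption \ref{assu4} encodes in dimension one) — so this term can be discarded or used to absorb lower-order contributions. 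The linear term $\sigma_{1}\tilde{y}_{2}$ gives a $C\int_{0}^{T}|\tilde{y}_{2}|^{p}\,ds$ contribution; the martingale $dM_{2}$ has quadratic variation $dg_{2}$, so its It\^{o} correction is controlled by $C\int_{0}^{T}|\tilde{y}_{2}|^{p-2}(\sigma_{3}\tilde{y}_{1}^{2}+\sigma_{4})\,ds$, and here one replaces $\tilde{y}_{1}$-dependence by noting $g_{2}$ is built from $y_{1}$ and $\hat{\mathcal{Z}}$, whence the $\hat{\mathcal{Z}}$-moment term appears. Then Burkholder--Davis--Gundy on the stochastic integral, Young's inequality to absorb a $\tfrac12\mathbb{E}\sup|\tilde{y}_{2}|^{p}$ term, Gronwall, and finally the monotone-convergence removal of $\tau_{K}$ exactly as in Lemma \ref{boundb1}. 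The bound $C\varepsilon^{-\kappa p/3}$ could also be quoted if one wants the explicit rate, but as stated \eqref{eq05} only needs the abstract form.

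For \eqref{eq06}, the key observation is that $\tilde{y}_{1}$ and $\tilde{y}_{2}$ solve the \emph{same} drift SDE driven by the two different martingales $M_{1}$ and $M_{2}$, and Lemma \ref{lemb} already gives $\mathbb{E}\sup|M_{1}-M_{2}|^{p}\leq C\varepsilon^{p/15-4\kappa p/3}$. So I would set $e(T):=\tilde{y}_{1}(T)-\tilde{y}_{2}(T)-(M_{1}(T)-M_{2}(T))$, which satisfies a random ODE (no stochastic integral):
\begin{align*}
e(T)=\int_{0}^{T}\sigma_{1}\big(\tilde{y}_{1}-\tilde{y}_{2}\big)\,ds+\int_{0}^{T}\sigma_{2}\big(\tilde{y}_{1}^{3}-\tilde{y}_{2}^{3}\big)\,ds.
\end{align*}
Write $\tilde{y}_{1}-\tilde{y}_{2}=e+(M_{1}-M_{2})$ and factor $\tilde{y}_{1}^{3}-\tilde{y}_{2}^{3}=(\tilde{y}_{1}-\tilde{y}_{2})(\tilde{y}_{1}^{2}+\tilde{y}_{1}\tilde{y}_{2}+\tilde{y}_{2}^{2})$; apply It\^{o} (or just the fundamental theorem of calculus) to $|e(T)|^{p}$, use the sign of $\sigma_{2}$ on the leading cubic term to dominate the cross terms, and bound the remainder by $C\int_{0}^{T}(|e|^{p}+|M_{1}-M_{2}|^{p}(1+|\tilde{y}_{1}|^{2p}+|\tilde{y}_{2}|^{2p}))\,ds$. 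Taking $\sup$, using H\"older to split $\mathbb{E}\sup|M_{1}-M_{2}|^{p}\cdot(\text{high moments of }\tilde{y}_{i})$ with the moment bounds from Lemma \ref{boundb} / \eqref{eq05} and Lemma \ref{lemb}, then Gronwall, yields $\mathbb{E}\sup|e(T)|^{p}\leq C\varepsilon^{p/15-8\kappa p/3}$ (the extra $\kappa$-loss compared to \eqref{eq33} comes from these H\"older-absorbed high moments $\varepsilon^{-\kappa p/3}$-type factors). Finally the triangle inequality $\|\tilde{y}_{1}-\tilde{y}_{2}\|\leq\|e\|+\|M_{1}-M_{2}\|$ closes \eqref{eq06}.

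The main obstacle I anticipate is bookkeeping the $\kappa$-exponents: one must check that combining the $\varepsilon^{p/15-4\kappa p/3}$ rate from Lemma \ref{lemb} with the $\varepsilon^{-\kappa p/3}$ polynomial-growth moment bounds for $\tilde{y}_{1},\tilde{y}_{2}$ (which enter squared, as $\tilde{y}_{i}^{2}$, hence contribute $\varepsilon^{-2\kappa p/3}$ after H\"older with the appropriate conjugate exponent, and one must be a little careful that Lemma \ref{l05}'s own statement uses an abstract $\hat{\mathcal{Z}}$-bound rather than an explicit power) indeed lands on $\varepsilon^{p/15-8\kappa p/3}$ and not something worse. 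A secondary technical point is that $M_{1}$ and $M_{2}$ are defined on possibly different (nested) filtrations $\mathscr{F}_{T}\subset\tilde{\mathscr{F}}_{T}$ coming from Lemma \ref{l20}, so one should state the SDEs \eqref{eq35}--\eqref{eq02} on the enlarged basis $(\Omega,\tilde{\mathscr{F}}_{T},\mathbb{P})$ — which the excerpt already does — and note that $M_{1}$ remains a martingale there, so that the It\^{o} calculus above is legitimate; everything else is routine and parallels Lemma \ref{boundb} and Lemma \ref{boundb1} closely enough that I would simply say ``the proof is analogous'' for the repetitive estimates.
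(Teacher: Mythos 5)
Your proposal is correct and follows essentially the same route as the paper, whose proof of this lemma is just the one-line remark that $\sigma_{2}\leq 0$ and $M_{1}-M_{2}=\bar{\mathcal{O}}(\varepsilon^{\frac{1}{15}-\frac{4\kappa}{3}})$ (Lemma \ref{lemb}) allow one to repeat the arguments of Lemma \ref{boundb} and Lemma \ref{boundb1}. Your explicit decomposition $e=\tilde{y}_{1}-\tilde{y}_{2}-(M_{1}-M_{2})$ is the scalar analogue of the correction $h=x-a+R$ used there, and your bookkeeping of the sign of $\sigma_{2}$, the BDG/Gronwall steps, and the filtration enlargement matches what the paper intends.
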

	\begin{proof}
		Recalling $\sigma_{2}\leq 0$ and $M_{1}-M_{2}=\bar{\mathcal{O}}(\varepsilon^{\frac{1}{15}-\frac{4\kappa}{3}})$, we can get (\ref{eq05}) and (\ref{eq06}) by similar deduction of the proof of Lemma \ref{boundb} and Lemma \ref{boundb1}.
		
		We complete the proof.
	\end{proof}
	\begin{lemma}\label{l06}
		Suppose Assumption \ref{assu1}-\ref{assu6} hold. Then, for $p> 1$, there exists a positive constant $C$, such that
		\begin{align}
		&\mathbb{E}\Big(\sup_{0\leq T\leq T_{0}}|\tilde{y}_{3}(T)|^{p}\Big)\leq C|\tilde{\varphi}(0)|^{p}+C.\label{eq307}
		\intertext{Moreover, if $|\tilde{\varphi}(0)|\leq \varepsilon^{-\frac{\kappa}{3}}$,}
		&\mathbb{E}\Big(\sup_{0\leq T \leq T_{0}}|\tilde{y}_{2}(T)-\tilde{y}_{3}(T)|^{p}\Big)\leq C \varepsilon^{\frac{p}{15}-\frac{8\kappa p}{3}}.\label{eq04}
		\end{align}
	\end{lemma}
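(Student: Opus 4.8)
The plan is to treat the two assertions separately. Estimate $(\ref{eq307})$ is a moment bound for the scalar diffusion $(\ref{eq03})$ and follows the scheme already used in Lemma \ref{boundb1}; estimate $(\ref{eq04})$ compares $(\ref{eq02})$ and $(\ref{eq03})$, which are driven by the \emph{same} Brownian motion $B$, and is obtained by a Gronwall argument that exploits the dissipativity of the cubic drift together with the Lipschitz character of the square-root diffusion coefficient.

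For $(\ref{eq307})$ I would first introduce the stopping time $\tau_{K}:=\inf\{T>0:|\tilde{y}_{3}(T)|>K\}$ and apply It$\hat{\textup{o}}$'s formula to $|\tilde{y}_{3}(T)|^{p}$ for $p\geq 2$ on $[0,T\wedge\tau_{K}]$. Since $\sigma_{2}\leq 0$ (as recalled in the proof of Lemma \ref{l05}), the quartic term $p\sigma_{2}|\tilde{y}_{3}|^{p-2}\tilde{y}_{3}^{4}$ has a favourable sign and may be dropped, while the remaining drift and the It$\hat{\textup{o}}$ correction $\tfrac{p(p-1)}{2}|\tilde{y}_{3}|^{p-2}(\sigma_{3}\tilde{y}_{3}^{2}+\sigma_{4})$ are bounded by $C(|\tilde{y}_{3}|^{p}+1)$ via Young's inequality. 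Taking suprema, estimating the martingale part by the Burkholder--Davis--Gundy inequality and Young's inequality, absorbing the resulting $\tfrac{1}{2}\mathbb{E}\sup|\tilde{y}_{3}|^{p}$, and applying Gronwall's lemma yields a bound $C|\tilde{\varphi}(0)|^{p}+C$ uniform in $K$; the monotone convergence theorem then removes the stopping time, and H\"{o}lder's inequality handles $1\leq p<2$.

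For $(\ref{eq04})$ set $e(T):=\tilde{y}_{2}(T)-\tilde{y}_{3}(T)$, so that $e(0)=0$ and, recalling $M_{2}(T)=\int_{0}^{T}(\sigma_{3}\tilde{y}_{1}^{2}+\sigma_{4})^{1/2}\textup{d}B$ from $(\ref{eq34})$,
\[
\textup{d}e=\sigma_{1}e\,\textup{d}T+\sigma_{2}(\tilde{y}_{2}^{3}-\tilde{y}_{3}^{3})\,\textup{d}T+\big[(\sigma_{3}\tilde{y}_{1}^{2}+\sigma_{4})^{1/2}-(\sigma_{3}\tilde{y}_{3}^{2}+\sigma_{4})^{1/2}\big]\textup{d}B .
\]
Applying It$\hat{\textup{o}}$'s formula to $|e|^{p}$ for $p\geq 2$, the cubic drift contributes $p\sigma_{2}|e|^{p-2}e(\tilde{y}_{2}^{3}-\tilde{y}_{3}^{3})=p\sigma_{2}|e|^{p}(\tilde{y}_{2}^{2}+\tilde{y}_{2}\tilde{y}_{3}+\tilde{y}_{3}^{2})\leq 0$, since $\sigma_{2}\leq 0$ and $\tilde{y}_{2}^{2}+\tilde{y}_{2}\tilde{y}_{3}+\tilde{y}_{3}^{2}\geq 0$. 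The key observation for the diffusion term is that $y\mapsto(\sigma_{3}y^{2}+\sigma_{4})^{1/2}$ is globally Lipschitz with constant $\sqrt{\sigma_{3}}$ (its derivative $\sigma_{3}y/(\sigma_{3}y^{2}+\sigma_{4})^{1/2}$ is bounded in modulus by $\sqrt{\sigma_{3}}$), so that $|(\sigma_{3}\tilde{y}_{1}^{2}+\sigma_{4})^{1/2}-(\sigma_{3}\tilde{y}_{3}^{2}+\sigma_{4})^{1/2}|\leq\sqrt{\sigma_{3}}\,|\tilde{y}_{1}-\tilde{y}_{3}|\leq\sqrt{\sigma_{3}}\,(|\tilde{y}_{1}-\tilde{y}_{2}|+|e|)$. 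Consequently the It$\hat{\textup{o}}$ correction and, after Burkholder--Davis--Gundy, the martingale part are dominated by $C|e|^{p}+C|\tilde{y}_{1}-\tilde{y}_{2}|^{p}$ plus a term absorbed into $\tfrac{1}{2}\mathbb{E}\sup|e|^{p}$. Taking expectations of suprema (with a routine localization as in the first part), invoking $\mathbb{E}(\sup_{0\leq T\leq T_{0}}|\tilde{y}_{1}-\tilde{y}_{2}|^{p})\leq C\varepsilon^{\frac{p}{15}-\frac{8\kappa p}{3}}$ from $(\ref{eq06})$, and applying Gronwall's lemma gives $\mathbb{E}(\sup_{0\leq T\leq T_{0}}|e(T)|^{p})\leq C\varepsilon^{\frac{p}{15}-\frac{8\kappa p}{3}}$; H\"{o}lder's inequality then covers $1\leq p<2$.

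The only genuine subtlety is that the diffusion of $(\ref{eq02})$ carries $\tilde{y}_{1}$ rather than $\tilde{y}_{3}$ and that its coefficient is a square root, hence not obviously Lipschitz in the state variable. The resolution is the elementary fact above that $(\sigma_{3}y^{2}+\sigma_{4})^{1/2}$ is globally Lipschitz with constant $\sqrt{\sigma_{3}}$, uniformly in $\varepsilon$, which lets the error split into the already-estimated gap $\tilde{y}_{1}-\tilde{y}_{2}$ and a residual piece closed by Gronwall; everything else is the standard It$\hat{\textup{o}}$--Burkholder--Davis--Gundy--Gronwall machinery used repeatedly above.

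We complete the proof.
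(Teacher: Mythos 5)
Your proposal is correct and follows essentially the same route as the paper: part one repeats the It\^{o}--BDG--Gronwall--monotone-convergence scheme of Lemma \ref{boundb1} (the paper simply cites that similarity), and part two is exactly the paper's argument with $R_{5}=\tilde{y}_{2}-\tilde{y}_{3}$, the global Lipschitz property of $g(x)=(\sigma_{3}x^{2}+\sigma_{4})^{1/2}$, the splitting $|g(\tilde{y}_{1})-g(\tilde{y}_{3})|\leq C(|\tilde{y}_{1}-\tilde{y}_{2}|+|R_{5}|)$, the bound (\ref{eq06}), and Gronwall plus Burkholder--Davis--Gundy. You merely make explicit two points the paper leaves implicit (the sign of the cubic difference term via $\sigma_{2}\leq 0$ and the Lipschitz constant $\sqrt{\sigma_{3}}$), which is fine.
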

	\begin{proof}
		Since the proof of (\ref{eq307}) is similar to that of (\ref{boundb1z}), we do not present the detail.
		
		Let us start to prove (\ref{eq04}).
		
		Introduce a function $g(x)=(\sigma_{3}x^{2}+\sigma_{4})^{\frac{1}{2}}$, and a notation $R_{5}(T)=\tilde{y}_{2}(T)-\tilde{y}_{3}(T)$.\\ Then,
		\begin{equation*}
		R_{5}(T)=\sigma_{1}\int^{T}_{0}R_{5}\textup{d}s+\sigma_{2}\int^{T}_{0}(\tilde{b}^{3}_{2}-\tilde{b}^{3}_{3})\textup{d}s+\int^{T}_{0}g(\tilde{y}_{1})-g(\tilde{y}_{3})\textup{d}B.
		\end{equation*}
		For $p\geq2$, thanks to It$\hat{\textup{o}}^{,}$s formula, we derive
		\begin{align}
		|R_{5}(T)|^{p}&=\sigma_{1}p\int^{T}_{0}|R_{5}|^{p}\textup{d}s+\sigma_{2}p\int^{T}_{0}|R_{5}|^{p-2}R_{5}(\tilde{b}^{3}_{2}-\tilde{b}^{3}_{3})\textup{d}s \notag\\
		 &\quad+p\int^{T}_{0}|R_{5}|^{p-2}R_{3}[g(\tilde{y}_{1})-g(\tilde{y}_{3})]\textup{d}B \notag\\
		&\quad+p(p-1)\int^{T}_{0}|R_{5}|^{p-2}|g(\tilde{y}_{1})-g(\tilde{y}_{3})|^{2}\textup{d}s \notag \\
		&\leq C\int^{T}_{0}|R_{5}|^{p}\textup{d}s+C\int^{T}_{0}|R_{5}|^{p-2}R_{5}[g(\tilde{y}_{1})-g(\tilde{y}_{3})]\textup{d}B \notag\\
		&\quad+C\int^{T}_{0}|R_{5}|^{p}+|\tilde{y}_{1}-\tilde{y}_{2}|^{p}\textup{d}s,\label{eq07}
		\end{align}
		where we use the globally Lipschitz property of $g(x)$.\\
		Since the stochastic integral is a martingale, so we get
		\begin{equation*}
		\begin{split}
		\mathbb{E}\Big(|R_{5}(T)|^{p}\Big)\leq C\int^{T}_{0}\mathbb{E}\Big(|R_{5}(s)|^{p}\Big)\textup{d}s+C\int^{T}_{0}\mathbb{E}\Big(|\tilde{y}_{1}(s)-\tilde{y}_{2}(s)|^{p}\Big)\textup{d}s.
		\end{split}
		\end{equation*}
		By Gronwall's lemma and (\ref{eq06}), we have
		\begin{equation}\label{eq08}
		\mathbb{E}\Big(|R_{5}(T)|^{p}\Big)\leq C\varepsilon^{\frac{p}{15}-\frac{8\kappa p}{3}},~~ T\in[0,T_{0}].
		\end{equation}
		Taking the expectations of the supremum of (\ref{eq07}) on both sides,  by Burkholder-Davis-Gundy inequality, we get
		\begin{equation}\label{eqeq}
		\begin{split}
		\mathbb{E}\Big(\sup_{0\leq T\leq T_{0}}|R_{5}(T)|^{p}\Big)&\leq \mathbb{E}\Big(\int^{T_{0}}_{0}|R_{5}(s)|^{p}\textup{d}s\Big)\\
		&\quad+\mathbb{E}\Big(\int^{T_{0}}_{0}|R_{5}(s)|^{2p}+|\tilde{y}_{1}(s)-\tilde{y}_{2}(s)|^{2p}\textup{d}s\Big)^{\frac{1}{2}}.
		\end{split}
		\end{equation}
		Then, combining (\ref{eq06}), (\ref{eq08}) and (\ref{eqeq}), we obtain (\ref{eq04}).
		
		We complete the proof.
	\end{proof}
	\begin{lemma}\label{l08}
		Let Assumption \ref{assu1}-\ref{assu6} hold. For $p>1$, $\|u\|_{\alpha}\leq -\frac{\kappa}{3}$, there exists a positive constant $C$, such that
		\begin{equation*}
		\mathbb{E}\left(\sup_{0\leq T\leq \tau^{\star}}\|\mathcal{R}_{1}(T)\|^{p}_{\alpha}\right)\leq C\varepsilon^{\frac{p}{15}-12\kappa p},
		\end{equation*}
		where
		\begin{equation*}
		\mathcal{R}_{1}(T)=u(\varepsilon^{-2}T)-\varepsilon \tilde{y}_{3}(T)e_{1}-\varepsilon Q(T)-\varepsilon \mathcal{Z}(T).
		\end{equation*}
	\end{lemma}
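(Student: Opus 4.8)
The plan is to unwind all the reduction steps of Subsections 4.1--4.3 and add up the errors. First I would start from the decomposition $u(\varepsilon^{-2}T)=\varepsilon\varphi(T)+\varepsilon\psi(T)$ of Section 4 together with $\psi(T)=Q(T)+J(T)+\mathcal{Z}(T)+K(T)$; substituting these into the definition of $\mathcal{R}_{1}(T)$, the terms $\varepsilon Q(T)$ and $\varepsilon\mathcal{Z}(T)$ cancel, leaving
\[
\mathcal{R}_{1}(T)=\varepsilon\big(\varphi(T)-\tilde{y}_{3}(T)e_{1}\big)+\varepsilon J(T)+\varepsilon K(T).
\]
Because $\dim\mathcal{N}=1$ (Assumption \ref{assu1-1}), the kernel is spanned by $e_{1}$, so $\varphi(T)=\tilde{\varphi}(T)e_{1}$ with $\tilde{\varphi}(T):=\langle\varphi(T),e_{1}\rangle$ and likewise $y_{1}(T)=\tilde{y}_{1}(T)e_{1}$; hence $\varphi(T)-\tilde{y}_{3}(T)e_{1}=(\tilde{\varphi}(T)-\tilde{y}_{3}(T))e_{1}$ and, since all $\mathcal{H}^{\alpha}$-norms are equivalent on $\mathcal{N}$, it is enough to bound $\mathbb{E}(\sup_{T\le\tau^{\star}}|\tilde{\varphi}(T)-\tilde{y}_{3}(T)|^{p})$ along with $\mathbb{E}(\sup_{T\le\tau^{\star}}\|J(T)\|_{\alpha}^{p})$ and $\mathbb{E}(\sup_{T\le\tau^{\star}}\|K(T)\|_{\alpha}^{p})$.

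Next I would break the scalar error through the intermediate processes,
\[
\tilde{\varphi}(T)-\tilde{y}_{3}(T)=\big(\tilde{\varphi}(T)-\tilde{y}_{1}(T)\big)+\big(\tilde{y}_{1}(T)-\tilde{y}_{2}(T)\big)+\big(\tilde{y}_{2}(T)-\tilde{y}_{3}(T)\big),
\]
and estimate the three pieces separately. For the first I note that $\tilde{y}_{1}=\langle y_{1},e_{1}\rangle$ solves (\ref{eq35}): projecting the reduced system (\ref{eq0130}) onto $e_{1}$ and using one-dimensionality, $\langle\bar{\mathcal{L}}y_{1},e_{1}\rangle=\sigma_{1}\tilde{y}_{1}$, $2\langle\mathcal{F}(y_{1}),e_{1}\rangle=\sigma_{2}\tilde{y}_{1}^{3}$ by trilinearity and symmetry of $\mathcal{F}$, and $\langle\int_{0}^{T}\Gamma(y_{1},\hat{\mathcal{Z}})\textup{d}\widetilde{W},e_{1}\rangle=M_{1}(T)$ by (\ref{eq28}); then $|\tilde{\varphi}-\tilde{y}_{1}|\le\|\varphi-y_{1}\|$ and (\ref{eq0921}) of Lemma \ref{boundb1} gives order $\varepsilon^{1/5-12\kappa}$ (the standing hypothesis on $\|u(0)\|_{\alpha}$ forces $\|\varphi(0)\|\le\varepsilon^{-\kappa/3}$, which is exactly what that lemma needs, and the same bound on $|\tilde{\varphi}(0)|$ makes Lemmas \ref{l05} and \ref{l06} applicable). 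The second piece is $\bar{\mathcal{O}}(\varepsilon^{1/15-8\kappa/3})$ by (\ref{eq06}) of Lemma \ref{l05}, and the third is $\bar{\mathcal{O}}(\varepsilon^{1/15-8\kappa/3})$ by (\ref{eq04}) of Lemma \ref{l06}; the slowest of these is absorbed into $\varepsilon^{1/15-12\kappa}$ since $\varepsilon<1$, so $\mathbb{E}(\sup_{T\le\tau^{\star}}|\tilde{\varphi}(T)-\tilde{y}_{3}(T)|^{p})\le C\varepsilon^{p/15-12\kappa p}$.

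It remains to invoke Lemma \ref{l02} and Lemma \ref{l30}, which bound $J$ and $K$ by $\bar{\mathcal{O}}(\varepsilon^{1-2\kappa})$, so $\varepsilon J$ and $\varepsilon K$ are $\bar{\mathcal{O}}(\varepsilon^{2-2\kappa})$; a triangle inequality together with $\varepsilon^{p}\cdot\varepsilon^{p/15-12\kappa p}=\varepsilon^{16p/15-12\kappa p}\le\varepsilon^{p/15-12\kappa p}$ then yields the stated estimate. I expect the only real effort to be bookkeeping: checking that all the constituent bounds are available on the common interval $[0,\tau^{\star}]$ (which is $[0,\tau^{\star}_{1}]$ of Definition \ref{defs2}; for $\tilde{y}_{1}-\tilde{y}_{2}$ and $\tilde{y}_{2}-\tilde{y}_{3}$ the estimates hold even on all of $[0,T_{0}]\supseteq[0,\tau^{\star}]$) and keeping straight which of the exponents $1/5-12\kappa$, $1/15-8\kappa/3$, $2-2\kappa$ dominates. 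No new stochastic analysis is needed, since the martingale-representation identity (\ref{eq34}) that defines the amplitude equation (\ref{eq03}) has already been set up in Lemma \ref{lemb}.
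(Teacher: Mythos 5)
Your proposal is correct and follows essentially the same route as the paper: decompose $\mathcal{R}_{1}=\varepsilon[\varphi-\tilde{y}_{3}e_{1}+J+K]$, telescope through $\tilde{y}_{1}$ and $\tilde{y}_{2}$, and apply Lemmas \ref{l02}, \ref{l30}, \ref{boundb1}, \ref{l05}, \ref{l06} with the triangle inequality. The only difference is that you spell out the bookkeeping (which exponent dominates, why the one-dimensionality lets you pass to scalar estimates) that the paper leaves implicit.
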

	
	\begin{proof}
		Rewriting $\mathcal{R}_{1}$ as follows:
		\begin{equation*}
		\begin{split}
		\mathcal{R}_{1}&=\varepsilon [\varphi(T)+ \psi(T)-\tilde		{y}_{3}(T)e_{1}-Q(T)-\mathcal{Z}(T)]\\
		&=\varepsilon [\varphi(T)-\tilde{y}_{3}(T)e_{1}+J(T)+K(T)]\\
		&=\varepsilon [\varphi(T)-\tilde{y}_{1}(T)e_{1}+\tilde{y}_{1}(T)e_{1}-\tilde{y}_{2}(T)e_{1}+\tilde{y}_{2}(T)e_{1}-\tilde{y}_{3}(T)e_{1}]\\
		&\quad+\varepsilon[J(T)+K(T)].
		\end{split}
		\end{equation*}
		Thanks to Lemma \ref{l02}, \ref{boundb1}, \ref{l05} and \ref{l06}, we can easily prove this lemma by triangle inequality.
	\end{proof}
	\begin{definition}
		Let Assumption \ref{assu1}-\ref{assu6} hold. For $\kappa>0$ , define $\bar{\Omega}^{\star}\subset\Omega$ of all $\omega\subset\Omega$ such that all these estimations
		\begin{equation*}
		\sup_{0\leq T \leq \tau^{\star}}\|\varphi(T)\|< \varepsilon^{-\frac{\kappa}{2}},~~
		\sup_{0\leq T \leq \tau^{\star}}\|\psi(T)\|_{\alpha}<\varepsilon^{-\frac{\kappa}{2}},~~
		\sup_{0\leq T \leq \tau^{\star}}\|\mathcal{R}_{1}(T)\|_{\alpha}< \varepsilon^{\frac{16}{15}-13\kappa}.
		\end{equation*}
		hold.
	\end{definition}
	\begin{lemma}\label{l09}
		For $p>1$, there exists a positive constant $C$, such that
		\begin{equation*}
		\mathbb{P}(\bar{\Omega}^{\star})\geq 1-C\varepsilon^{p}.
		\end{equation*}
	\end{lemma}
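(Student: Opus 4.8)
The plan is to copy, at the level of strategy, the proof of Lemma \ref{lem16}. I pass to the complement of $\bar{\Omega}^{\star}$, split it by a union bound into the three ``bad'' events defining $\bar{\Omega}^{\star}$, and control each one by the Chebyshev inequality applied to a sufficiently high moment. Concretely, I start from
\[
\mathbb{P}\big((\bar{\Omega}^{\star})^{c}\big)\le \mathbb{P}\Big(\sup_{0\le T\le\tau^{\star}}\|\varphi(T)\|\ge\varepsilon^{-\frac{\kappa}{2}}\Big)+\mathbb{P}\Big(\sup_{0\le T\le\tau^{\star}}\|\psi(T)\|_{\alpha}\ge\varepsilon^{-\frac{\kappa}{2}}\Big)+\mathbb{P}\Big(\sup_{0\le T\le\tau^{\star}}\|\mathcal{R}_{1}(T)\|_{\alpha}\ge\varepsilon^{\frac{16}{15}-13\kappa}\Big),
\]
and, for an arbitrary $q\ge 1$ to be chosen at the end, bound each summand by $\varepsilon^{c\,q}$ times a $q$-th moment.

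For the first two summands I insert estimate \eqref{eq095} of Lemma \ref{boundb1}, $\mathbb{E}\big(\sup_{0\le T\le\tau^{\star}}\|\varphi(T)\|^{q}\big)\le C\varepsilon^{-\kappa q/3}$, and the bound of Lemma \ref{l07}, $\mathbb{E}\big(\sup_{0\le T\le\tau^{\star}}\|\psi(T)\|^{q}\big)\le C\|\psi(0)\|^{q}+C\varepsilon^{-\kappa q/3}\le C\varepsilon^{-\kappa q/3}$; the smallness $\|\varphi(0)\|\le\varepsilon^{-\kappa/3}$ and $\|\psi(0)\|\le\varepsilon^{-\kappa/3}$ required by these two lemmas follows from the standing hypothesis $\|u(0)\|_{\alpha}\le\varepsilon^{1-\kappa/3}$ together with $u=\varepsilon\varphi+\varepsilon\psi$. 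For the third summand I use Lemma \ref{l08}, whose estimate for $\mathbb{E}\big(\sup_{0\le T\le\tau^{\star}}\|\mathcal{R}_{1}(T)\|^{q}_{\alpha}\big)$ carries a power of $\varepsilon$ exceeding $\frac{16}{15}q-13\kappa q$ by a positive multiple of $q$ (this margin being exactly what the slack between the thresholds $\varepsilon^{-\kappa/2},\varepsilon^{16/15-13\kappa}$ in the definition of $\bar{\Omega}^{\star}$ and the established decay rates $\varepsilon^{-\kappa/3},\varepsilon^{16/15-12\kappa}$ was designed to provide). Cancelling the threshold powers against the moment powers leaves
\[
\mathbb{P}\big((\bar{\Omega}^{\star})^{c}\big)\le C\varepsilon^{\kappa q/6}+C\varepsilon^{\kappa q/6}+C\varepsilon^{\kappa q},
\]
so, $\kappa$ being fixed and positive, it suffices to take $q\ge 6p/\kappa$ to obtain $\mathbb{P}\big((\bar{\Omega}^{\star})^{c}\big)\le C\varepsilon^{p}$, that is, $\mathbb{P}(\bar{\Omega}^{\star})\ge 1-C\varepsilon^{p}$.

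I do not expect any genuine obstacle: no new estimate is needed beyond Lemmas \ref{l07}, \ref{boundb1} and \ref{l08}, and the argument is essentially a verbatim transcription of the proof of Lemma \ref{lem16}. The one point that must be checked with care is the bookkeeping of exponents — that for each of the three events the difference between the threshold exponent in $\bar{\Omega}^{\star}$ and the exponent in the matching moment bound is a strictly positive multiple of $q$, so that $q$ can be enlarged to dominate $\varepsilon^{p}$ — together with the routine verification that the initial-data conditions of the three cited lemmas are guaranteed by the standing assumption on $u(0)$.
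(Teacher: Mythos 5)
Your proposal is correct and coincides with the paper's (omitted) argument: the paper merely states that the lemma follows from Chebyshev's inequality and a simple calculation, and the template is exactly the union-bound-plus-high-moment-Chebyshev proof of Lemma \ref{lem16} that you transcribe, with Lemma \ref{l07}, Lemma \ref{boundb1} and Lemma \ref{l08} supplying the moment bounds. The one point to flag is that your bookkeeping for the $\mathcal{R}_{1}$ event uses the rate $\varepsilon^{\frac{16p}{15}-12\kappa p}$, which is what the proof of Lemma \ref{l08} actually produces (the bracket there carries an overall factor $\varepsilon$) and what the threshold $\varepsilon^{\frac{16}{15}-13\kappa}$ in the definition of $\bar{\Omega}^{\star}$ presupposes, rather than the exponent $\frac{p}{15}-12\kappa p$ literally printed in that lemma's statement — with the printed exponent the Chebyshev margin would be negative, so your reading is the intended one.
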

	\begin{proof}
		The proof follows from Chebyshev inequality and simple calculation, so it is omitted here.
		
		We complete the proof.
	\end{proof}
	
	Now, we give the main result of this subsection.
	\begin{Theorem}\label{theo1}
		Let Assumption \ref{assu1}-\ref{assu6} hold and $\|u(0)\|_{\alpha}\leq \varepsilon^{1-\frac{\kappa}{3}}$.
		Then, for  $p>1$, there exists a positive constant $C$, such that
		\begin{equation*}
		\mathbb{P}\Big(\sup_{0\leq t\leq\varepsilon^{-2}T_{0}}\|u(t)-\varepsilon \tilde{y}_{3}(\varepsilon^{2}t)e_{1}-\varepsilon Q(\varepsilon^{2}t)-\varepsilon \mathcal{Z}(\varepsilon^{2}t)\|_{\alpha}>\varepsilon^{\frac{16}{15}-13\kappa}\Big)\leq \varepsilon^{p}.
		\end{equation*}
	\end{Theorem}
	
	\begin{proof}
		Note that
		\begin{equation*}
		\bar{\Omega}^{\star}\subseteq\Big\{\omega\Big|\sup_{0\leq T \leq \tau^{\star}_{1}}\|\varphi(T)\|< \varepsilon^{-\kappa},
		\sup_{0\leq T \leq \tau^{\star}_{1}}\|\psi(T)\|_{\alpha}<\varepsilon^{-\kappa}\Big\}\subseteq\{\omega\Big|\tau^{\star}=T_{0}\}\subseteq\Omega.
		\end{equation*}
		Then,
		\begin{equation*}
		\sup_{0\leq T \leq T_{0}}\|\mathcal{R}_{1}(T)\|_{\alpha}=\sup_{0\leq T \leq \tau^{\star}_{1}}\|\mathcal{R}_{1}(T)\|_{\alpha}< \varepsilon^{\frac{4}{3}-13\kappa}, \omega\in \bar{\Omega}^{\star}.
		\end{equation*}
		By Lemma \ref{l09},
		\begin{equation*}
		\mathbb{P}(\sup_{0\leq T \leq T_{0}}\|\mathcal{R}_{1}(T)\|_{\alpha}\geq\varepsilon^{\frac{16}{15}-13\kappa})\leq 1-\mathbb{P}(\bar{\Omega}^{\star})\leq \varepsilon^{p}.
		\end{equation*}
		
		We complete the proof.
	\end{proof}
	
	We would like to give additional remarks before  closing this section.
	\begin{Remark}~\\
		(1). For the case that there is only multiplicative noise in (\ref{eq001}), we can easily obtain the amplitude equation, and prove that the approximation solution converges to the  original one with the  rate $\bar{\mathcal{O}}(\varepsilon^{2-13\kappa})$. \\
		(3). For the case that $\bar{G}_{c}'(0)(\hat{\mathcal{Z}})\cdot-B_{c}(I\otimes \mathcal{A}_{s})^{-1}(\hat{\mathcal{Z}}\otimes\widetilde{G}\cdot)=0$, we just need to deal with the O-U process in drift terms, then obtain the amplitude equation and further prove
		the error between the approximation solution and the  original one is $\bar{\mathcal{O}}(\varepsilon^{\frac{3}{2}-13\kappa})$. \\
		(3). For any $1<h<\frac{5}{4}$, we can choose suitable $\tilde{\alpha}$ in Lemma \ref{lx} and $\gamma_{0}$ in Lemma \ref{lemb} such that
		the convergence rate is Theorem \ref{theo1} to $\bar{\mathcal{O}}(\varepsilon^{h-13\kappa})$.\\
		(4). If the amplitude equation is autonomous, we can analyze the stability of the original system via it. 
	\end{Remark}
	
	\section{Example}
	In the section, we will apply our main results to establish the amplitude equation for the following Burgers' equation with additive and multiplicative noise on the spatial domain $D=[0,\pi]$ subject to  Dirichlet boundary condition:
	\begin{align}\label{eq0105}
	\textup{d}u=[(\partial_{xx}+1)u+\varepsilon^{2}\nu u+u\partial_{x}u]\textup{d}T+(\sigma_{\varepsilon}+\varepsilon u)\textup{d}W(t),
	\end{align}
	where $W(t)$ is introduced later.
	
	Set $\mathcal{H}:=L^{2}[0,\pi]$, $\mathcal{A}:=\partial_{xx}+1$, $\mathcal{L}:=\nu I$ and $B(u,v):=\frac{1}{2}u\partial_{x}v+\frac{1}{2}v\partial_{x}u$
	
	Note that $-\mathcal{A}e_{k}(x)=\lambda_{k}e_{k}(x)$ with $\lambda_{k}=k^{2}-1$ and $e_{k}(x)=\sqrt{\frac{2}{\pi}}\sin{kx}$, where $e_{k}(x)$
	is an orthonormal basis of $\mathcal{H}$. Thus, $\mathcal{A}$ satisfies Assumption \ref{assu1} with $m=2$ and $\mathcal{N}=\{e_{1}\}$. Then Assumption \ref{assu1} is true with $m=2$ and $\mathcal{N}=\{e_{1}\}$. and
	$P_{c}$, the $\mathcal{H}-$ orthogonal projection on $\mathcal{N}$, commutes with $\mathcal{A}$.
	
	Assumption \ref{assu2} holds for the case $\alpha=\frac{1}{4}$, $\beta=\frac{5}{4}$. Note $\mathcal{H}^{\frac{1}{4}}$
	is with base $f_{k}:=k^{-\frac{1}{4}}e_{k}$.
	
	Obviously,
	\begin{equation*}
	P_{c}B(\sin{x}, \sin{x})=0,
	\end{equation*}
	and H\textup{\"{o}}lder inequality and Sobolev embedding theorem yield
	\begin{equation*}
	2\|B(u,v)\|_{\mathcal{H}^{-1}}=\|\partial_{x}(uv)\|_{\mathcal{H}^{-1}}\leq\|uv\|_{L^{2}}\leq C\|u\|_{L^{4}}\|v\|_{L^{4}}\leq C\|u\|_{\mathcal{H}^{\frac{1}{4}}}\|v\|_{\mathcal{H}^{\frac{1}{4}}}.
	\end{equation*}
	Thus Assumption \ref{assu3} is true.
	
	Set  $\mathcal{F}(u,v,w):=-B_{c}(u,\mathcal{A}_{s}^{-1}B_{s}(v,w)),~u,v,w\in\mathcal{N}$.
	Let us check Assumption \ref{assu4}.
	\begin{equation*}
	\mathcal{F}(u_{1}\sin{x}, u_{2}\sin{x}, u_{3}\sin{x})=-\frac{1}{24}u_{1}u_{2}u_{3}\sin(x)
	\end{equation*}
	implies that $\mathcal{F}$ is a trilinear, symmetric and continuous map.
	For $u_{1}, u_{2}, u_{3}\neq 0$, there exists $C_{0}>0$ such that
	\begin{align*}
	\|\mathcal{F}(u_{1}\sin{x}, u_{2}\sin{x}, u_{3}\sin{x})\|\leq C_{0}\|u_{1}\|\|u_{2}\|\|u_{3}\|,\\
	\end{align*}
	Moreover, it is easy to find $C_{1}, C_{2}, C_{3}$ such that (\ref{eq44}) holds, so $\mathcal{F}$ satisfies Assumption \ref{assu4}.
	
	$W(t)$ is standard cylindrical $\mathcal{H}$-valued Wiener process with covariance operator $\mathcal{Q}$ on a stochastic base
	$(\Omega,\mathscr{F},\{\mathscr{F}_{t}\}_{t\geq 0},\mathbb{P})$. Define $\mathcal{Q}$ by $\mathcal{Q}e_{k}=\alpha^{2}_{k}e_{k}$
	with $\alpha_{k}=0$, for $k=4,\cdots$.
	Define $G(u)$ by $G(u)\cdot v:=u\mathcal{Q}^{\frac{1}{2}}v$. Clearly,
	$G(\cdot):\mathcal{H}^{\frac{1}{4}}\rightarrow\mathscr{L}^{2}(\mathcal{H},\mathcal{H}^{\frac{1}{4}})$
	is a Hilbert-Schmidt operator such that all conditions of Assumption \ref{assu6} are satisfied.
	
	\textbf{Case I}:~$\sigma_{\varepsilon}=\varepsilon^{2}$. We consider the amplitude equation of (\ref{eq098}). \\
	Based on (\ref{eq012}), we derive the amplitude equation:
	\begin{equation}\label{eq030}
	\textup{d}\tilde{x}=(\nu \tilde{x}-\frac{1}{12 }\tilde{x}^{3})\textup{d}T+\alpha_{1}\textup{d}\tilde{\beta}_{1}(T)+\frac{8\sqrt{2}\alpha_{1}}{3\pi^{\frac{3}{2}}}\tilde{x}\textup{d}\tilde{\beta}_{1}(T)-\frac{8\sqrt{2}\alpha_{3}}{15\pi^{\frac{3}{2}}}\tilde{x}\textup{d}\tilde{\beta}_{3}(T).
	\end{equation}
	According to Theorem \ref{the1}, we state
	\begin{align*}
	u(t)\approx \varepsilon \tilde{x}(\varepsilon^{2} t)\sin(x).
	\end{align*}
	If $\alpha_{1}=0$, we note that the Stratonovich version of (\ref{eq030}) is
	\begin{equation*}\label{eq031}
	\textup{d}\tilde{x}=[(\nu-\frac{64\alpha_{3}^{2}}{225\pi^{3}}) \tilde{x}-\frac{1}{12}\tilde{x}^{3}]\textup{d}T-\frac{8\sqrt{2}\alpha_{3}}{225\pi^{\frac{3}{2}}}\tilde{x}\circ\textup{d}\tilde{\beta}_{3}(T).
	\end{equation*}
	Then according to \cite{Ma}, the constant solution 0 is locally stable if $\nu<\frac{64\alpha_{3}^{2}}{225\pi^{3}}$ and locally unstable if $\nu<\frac{64\alpha_{3}^{2}}{225\pi^{3}}$. By Theorem \ref{the1}, we conclude that if $\alpha_{3}$ is large enough, the multiplicative noise could stabilize the dynamics of (\ref{eq098}) with high probability.
	
	\textbf{Case II}:~$\sigma_{\varepsilon}=\varepsilon$. We consider the amplitude equation of
	\begin{align}\label{eq0106}
	\textup{d}u=[(\partial_{xx}+1)u+\varepsilon^{2}\nu u+u\partial_{x}u]\textup{d}T+(\varepsilon+\varepsilon u)\textup{d}W(t).
	\end{align}
	Note that Assumption \ref{assu3-1} is satisfied due to $B_{c}(\sin{kx},\sin{kx})=0$, for $k>n$. We further assume $\alpha_{1}=\alpha_{2}=0$.\\ Then, we obtain the amplitude equation  by (\ref{eq03}):
	\begin{align*}
	\textup{d}\tilde{y}=[(\nu-\frac{\alpha_{3}^{2}}{4048\pi})\tilde{y}-\frac{1}{12}\tilde{y}^{3}]\textup{d}T+(\frac{128\alpha_{3}^{2}}{225\pi^{3}}\tilde{y}^{2}+\frac{5184\alpha_{3}^{4}}{1225\pi^{3}})^{\frac{1}{2}}\textup{d}B(T),
	\end{align*}
	where $B(T)$ is a real-valued Brownian motion.
	Furthermore, according to Theorem \ref{theo1}, we state
	\begin{align*}
	u(t)\approx\varepsilon \tilde{y}(\varepsilon^{2}t)\sin{x}+e^{-8t}\frac{2}{\pi}\langle u(0), \sin{3x}\rangle_{\alpha}\sin{3x}+\varepsilon\int^{t}_{0}e^{-8(t-s)}\alpha_{3}\textup{d}\beta_{3}(t)\sin{3x}.
	\end{align*}
	\section*{Reference}
	
\end{document}